\definecolor{gray}{rgb}{0.5,0.5,0.5}
\newcommand{\ve}{\varepsilon}
\newcommand{\textred}{\textcolor{red}}
\def \div {{\rm div}}
\numberwithin{equation}{section}
\newtheorem{thm}{Theorem}
\numberwithin{thm}{section}
\newtheorem{rem}[thm]{Remark}
\newtheorem{prop}[thm]{Proposition}
\newtheorem{lem}[thm]{Lemma}
\newtheorem{cor}[thm]{Corollary}
\newtheorem{conj}[thm]{Conjecture}
\title{Emergence of
	traveling waves and their stability  in a free boundary model of cell motility}
\author[1]{Volodymyr Rybalko}
\author[2]{Leonid Berlyand}
\affil[1]{B.Verkin Institute for Low Temperature Physics and Engineering of NASU, 47 Nauky ave, Khariv 61103, e-mail: vrybalko@ilt.kharkov.ua} 
\affil[2]{Department of Mathematics, Huck Institutes of Life   Sciences and Materials Research Institute at the  Penn State University,
		University Park, PA, 16802, USA, e-mail: lvb2@psu.edu}
\begin{document}

\maketitle

%


	\begin{abstract}
		{We introduce a two-dimensional  Hele-Shaw type free boundary model for motility of eukaryotic cells on substrates.
			The key ingredients of this model are the Darcy law for overdamped motion of the cytoskeleton  gel (active gel) coupled with  advection-diffusion equation for myosin density leading to elliptic-parabolic Keller-Segel  system. This system is supplemented with Hele-Shaw type boundary conditions: Young-Laplace equation for pressure and continuity of velocities. We first  show that  radially symmetric stationary solutions become unstable and bifurcate  to traveling wave solutions at a critical value of the total myosin mass.  Next we perform linear stability analysis of these traveling wave solutions  and identify the type of bifurcation (sub- or supercritical). Our study sheds light on the mathematics underlying  instability/stability transitions in  this model.  Specifically, we show that these transitions occur via generalized eigenvectors of the linearized operator.  
			
%
			
}
	\end{abstract}
	
	\section{Introduction}
	
	

	Motion (motility)  of living cells has been the subject of extensive  studies in biology, soft-matter physics and more recently in mathematics.   Living cells     are  primarily driven by cytoskeleton  gel dynamics. The study of  cytoskeleton  gels  led to a recent development of the so-called  
	``Active gel physics", see \cite{ProJulJoa2015}.
	
	The key element  of this motion is cell polarity (asymmetry, e.g., the cell has a front and  back), which enables cells to carry out specialized functions. Therefore understanding  of  cell motility and polarity   are the fundamental issues in cell biology.  Also, motion of specific cells such as keratocytes  in the cornea   is of medical relevance as they are involved, e.g., in wound healing after eye surgery or injuries.    Moreover keratocytes are perfect for experiments and modeling since they are naturally found on 
	flat surfaces, which allows capturing 
	the main features of their motion by spatially two dimensional models. The typical
	modes of motion of keratocytes in  cornea as well as in fishscales are rest (no movement at all) or steady motion with
	fixed shape, speed, and direction \cite{Ker_etal2008}, \cite{BarLeeAllTheMog2015}.
		 That is why it is important  to study the
	stationary solutions  and traveling waves that describe resting cells  and 
	steadily moving cells respectively.
	
	The two leading  mechanisms of cell motion are  protrusion generated by polymerization of actin filaments (more precisely, filamentous 
	actin or F-actin) and  contraction  due to myosin motors \cite{Ker_etal2008}.
	The goal of this work is to study  the contraction-driven cell motion, since it dominates motility initiation \cite{RecPutTru2015}. 
	 To this end we  introduce and  investigate  a 2D model with free boundary that generalizes 1D free boundary model from 
	\cite{RecPutTru2013}, \cite{RecPutTru2015}. Despite of its simplicity this 1D model captures the  bifurcation of  stationary  solutions to traveling waves, which is the signature  property of cell motility.   While  mathematical  analysis in 2D is obviously much more involved than in 1D, especially in the  free boundary setting, the  results of the bifurcation analysis in 2D  agrees with 1D case \cite{RecPutTru2013} and \cite{RecPutTru2015}, in particular, both models exhibit a supercritical bifurcation.   However,  modeling of the  important phenomenon of cell shape evolution requires consideration beyond 1D and our results  captures breaking of  the shape symmetry, as depicted in Fig. \ref{fig:john}, which is an important biological phenomenon, see, e.g.,  \cite{BarLeeAllTheMog2015} and \cite{ZieAra2015}. Moreover, the main results of this work, in particular the explicit asymptotic formula \eqref{formula_for_lambda_second} for the eigenvalue, that decides on stability,  provide  a new insight for {\it both 1D and 2D models}. 
	Various 2D  free boundary models  of active gels were introduced in, e.g., \cite{BarLeeAllTheMog2015},  \cite{CalJonJoanPro2008}, \cite{BlaCas2013}.   
	The problems in  \cite{CalJonJoanPro2008} and \cite{BlaCas2013}
	model the polymerization  driven cell motion   when   myosin contraction is dominated by polymerization, which naturally  complements present work.  These models extend the classical Hele-Shaw model by adding  fundamental  active matter features  such as the presence of persistent motion  modeled by traveling wave solution.  
The Keller-Segel system with free boundaries as a model for contraction driven motility was first introduced in \cite{RecPutTru2013}, in 1D setting.
Its 2D counterpart  introduced  and analyzed numerically in   \cite{BarLeeAllTheMog2015} accounts for both polymerization  and myosin contraction. A simplified version of this  model was studied analytically in \cite{BerFuhRyb2018} where the traveling wave solutions were established. 
Note that the Keller-Segel system  in fixed domains  appears in various chemotaxis models and it has been extensively studied in mathematical literature due to the finite time blow-up phenomenon caused by the cross-diffusion term (\cite{TaoWin2012}, p.1903)  in dimensions 2 and higher, see also \cite{CalPerYas2018} for traveling waves in the 1D flux-limited Keller-Segel model. 
 We also mention  closely related  free boundary problems in tumor growth models. The key differences  are that in the latter models the  area  of  domain undergoes significant changes and  there is no persistent motion (see, e.g.,  \cite{Fri2004},
		\cite{PerVau2015}, and  \cite{MelRoc2017}).  

While in the model  \cite{BarLeeAllTheMog2015} the kinematic condition 
at the free boundary contains curvature, in the present work we assume continuity of velocities of the gel at the  cell edge  following the 1D  model introduced in \cite{RecPutTru2013}. Still the curvature appears in the force balance on the boundary since 
we adapt the Young-Laplace equation for the pressure. This  provides the same   regularizing effect as in  the classical 2D Hele-Shaw model. 

The focus of this work is on  understanding of  transitions from unstable rest to stable motion in the model.  Specifically, we establish existence of traveling wave solutions and perform their stability analysis. To show existence of a family of traveling 
waves we employ bifurcation analysis of the family of radially symmetric stationary solutions, following the idea originally proposed in  \cite{FriRei2001} in the the framework of a tumor growth model and followed in many subsequent works on such models, e.g. \cite{FriHu2008}, \cite{HaoHauHuLiuSomZha2012}. While aforementioned works deal with bifurcation from  radial to non-radial stationary solutions  via eigenvectors, in the present work we establish existence of traveling wave solutions bifurcating via {\it generalized} eigenvectors rather than eigenvectors. 
Similarly to \cite{FriHu2008} we use the Crandall-Rabinowitz bifurcation theorem to justify bifurcation to a family of traveling wave solutions parametrized by their velocity  $V$. However, the functional framework  for application of this theorem  significantly differs from that is used for tumor growth models.


The main mathematical novelty of this work is in the study of 
spectral properties of the operator $\mathcal{A}(V)$  linearized around traveling wave solutions.  The spectrum of $\mathcal{A}(V)$ near zero has rather interesting asymptotic behavior in the limit of small traveling wave velocity due to presence of  non trivial Jordan chains leading to generalized eigenvectors for multiple zero eigenvalue. Specifically, $\mathcal{A}(V)$ has zero eigenvalue  of multiplicity five for  $V=0$ that splits into  zero eigenvalue of multiplicity  four and  simple non zero eigenvalue   $\lambda(V)\not=0$ for $V\not=0$  whose sign determines stability of traveling waves.  
The main result of this work is an explicit asymptotic formula  \eqref{sovsemfinal_formula_for_lambda}  for   $\lambda(V)$, which determines stability of traveling waves  in terms of the total  myosin mass and a special eigenvalue $E$ describing movability (see Remark \ref{movability}) of  stationary solutions.

The spectral analysis of $\mathcal{A}(V)$ has two main challenges. First,   neither its coefficients 
nor spatial domain $\Omega(V)$
are explicitly known for $V\not=0$,  since  they are expressed via solution pair $\phi=\Phi(x, y, V)$, $\Omega=\Omega(V)$  
of  the free boundary  problem \eqref{tw_Liouvtypeeq}--\eqref{addit_cond_tw} for traveling waves.
The second principal challenge  is  due to non self-adjointness of  the operator $\mathcal{A}(V)$, which is the signature of active matter models.
 
  
 We next briefly describe main steps in the spectral analysis of  $\mathcal{A}(V)$. 
First we  assume  that the perturbations have the 
 natural symmetry of traveling wave solutions  and show 
that at zero velocity the linearized operator $\mathcal{A}(0)$ (restricted to the space of symmetric vectors)  has zero eigenvalue with multiplicity three (rather than multiplicity five in the general non-symmetric case).  For $V \not =0$ the operator 
has zero eigenvalue of multiplicity two with an eigenvector representing infinitesimal shifts of traveling wave solutions and generalized eigenvector obtained by taking derivative of traveling wave solutions in velocity. It has also another small eigenvalue $\lambda(V)$ whose corresponding eigenvector asymptotically merges with the eigenvector representing infinitesimal shifts (this  feature stands in contrast with orthogonality of eignevectors in the self-adjoint case).  Moreover, 
 finding  the principlal first  term in the asymptotic expansion of the eigenvalue $\lambda(V)$  requires a {\it four term} ansatz for the eigenvector, which has an interesting structure: the first two  terms are the eigenvector and the generalized eigenvector of $\mathcal{A}(V)$ for the zero eigenvalue (see pairs $m_i$, $\rho_i$, $i=1,2$ in \eqref{AnSaTzform}- \eqref{AnSaTzforrho}).  The resulting asymptotic formula \eqref{sovsemfinal_formula_for_lambda} for the eigenvalue  $\lambda(V)$ is remarkably simple, the principal term of $\lambda(V)$ is given in terms of two key physical quantities: movability of stationary solutions and the dependence of the total myosin mass $M (V)$ on the  traveling wave velocity $V$ (see explanation after  the main Theorem \ref{ThmNoSymmetry}).  However, its justification is rather involved and requires passing to the invariant subspace complementary to the generalized eigenspace of zero eigenvalue. In order to describe this 
invariant subspace we study generalized eigenvector 
of  the adjoint operator that  exibits singular behavior (it blows up) as $V\to 0$. Finally,  
we extend the results for symmetric perturbations to general perturbations of the traveling wave solutions.  The key observation here is that the  multiplicity of  zero eigenvalue  changes from two in the symmetric case to four in the general non-symmetric case. The additional eigenvector and generalized eigenvector are  the infinitesimal shifts in the direction orthogonal to motion and infinitesimal rotations respectively.  Thus in general case  there are five eigenvalues  (counted with multiplicity) of 
$\mathcal{A}(V)$ near zero,  but only one of them is nonzero and it determines stability of traveling waves.

	{\bf Acknowledgments}. Volodymyr Rybalko is grateful to PSU Center for Mathematics of Living and Mimetic Matter, and  to PSU Center for Interdisciplinary Mathematics  for support of his  two stays at Penn State. His travel was also supported by NSF grant DMS-1405769.  The work L. Berlyand  was partially supported by NSF grant DMS-2005262. We thank  our colleagues R. Alert, I. Aronson, J. Casademunt, J.-F. Joanny, N. Meunier, A. Mogilner, J. Prost  and L. Truskinovsky for useful discussions and suggestions on the model. We also express our gratitude to the  members of the L. Berlyand's PSU  research team, R. Creese,  M. Potomkin,  and A. Safsten for careful reading and help in  the preparation of the manuscript.   We  gratefully  acknowledge numerical implementation  by A. Safsten  of the asymptotic expansions of traveling wave solutions established in Theorem \ref{biftwtheorem} (Fig.\ref{fig:john}).  The computational work of  A. Safsten  was  partially supported by NSF grant DMS-2005262 and the detailed results are presented in \cite{SafRybBer2021}.
	\section{The model}
	\label{Section_the_model}
	
	We consider a 2D model of motion of  a cell  on a   flat substrate 
	which occupies a domain $\Omega(t)$ with free boundary. 
	The flow of the acto-myosin network  inside the domain $\Omega(t)$ is described by the velocity field $u$. In the adhesion-dominated regime (overdamped flow) \cite{CalJonJoanPro2008}, \cite{BlaCas2013} $u$  obeys  the Darcy law 
	\begin{equation}
	\label{DarcyLaw}
	-\nabla p=\zeta u \quad \text{in}\ \Omega(t),
	\end{equation}
	where $-p$ stands for the scalar stress ($p$ is the pressure) and $\zeta$  is the constant effective
	adhesion drag coefficient.  The actomyosin network  is modeled by  a compressible fluid (incompressible cytoplasm fluid
	can be squeezed easily into the dorsal direction in the cell \cite{NicNovPulRumBraSleMog2017}).
	The  main modeling assumption of this 
	is the following constitutive law 
	for the scalar stress $-p$
	\begin{equation}
	\label{constitutiveEQ}
	-p= \mu\div u +k m-p_{\rm h} \quad \text{in}\ \Omega(t)
	\end{equation}
	where $\mu\div u$ is the hydrodynamic stress ($\mu$ being the effective bulk 
	viscosity of the gel), the term 
	$k m$ is the active component of the stress 
	which is proportional to the density 
	$m=m(x,y,t)>0$ 
	of myosin motors with a constant contractility coefficient $k>0$,  
	$p_{\rm h}$ is the constant 
	hydrostatic  pressure (at equilibrium).
	Throughout this work we assume that 
	the effective bulk viscosity $\mu$ and the contractility coefficient $k$ in \eqref{constitutiveEQ} are scaled to $\mu=1$, 
	$k=1$. We prescribe the following condition on the boundary
	\begin{equation}\label{Young_Laplace_eq}
	p+p_{\rm e}=\gamma \kappa \quad \text{on} \ \partial \Omega(t),
	\end{equation}
	known as the Young-Laplace equation.  In \eqref{Young_Laplace_eq} $\kappa$ denotes the curvature, $\gamma>0$ 
	is a constant coefficient and $p_{\rm e}$ is the effective elastic restoring force which describes the mechanism of 
	approximate conservation of the area due to the membrane-cortex tension. The  elastic restoring force $p_{\rm e}$ generalizes  the one-dimensional nonlocal spring condition introduced in \cite{RecPutTru2013}, \cite{RecPutTru2015}, see  more recent work \cite{PutRecTru2018} which also introduces   the cell volume regulating pressure\footnote{The authors are grateful to L.Truskinovsky for bringing 
		\cite{PutRecTru2018} to their attention 
		and helpful discussions on bifurcations 
		during the preparation of the manuscript.},  
 and  we  similarly assume the simple linear dependence of $p_{\rm e}=p_{\rm e}(|\Omega|)$ on the area\footnote{An alternative way to this mean field elasticity approach (used to regularize the minimal model)  could be incorporating the Kelvin-Voigt model which accounts for the elastic
response at long time scales. To this end one can  introduce the intracellular density $\varrho$, whose transport is governed, e.g., 
by $\partial_t \varrho+{\rm div}(\varrho u)=0$ and modify the constitutive law \eqref{constitutiveEQ} by a term $P(\varrho)$ with appropriate linear or nonlinear function $P$. For a discussion of  different approaches of elastic regularazation of the minimal model in 1D case, including also Maxwell model, 
we address  interested reader to  \cite{RecTru2013}.  }: 
	\begin{equation}\label{k_e}
	p_{\rm e}=k_{\rm e}( |\Omega_h|-|\Omega|)/|\Omega_h|,
	\end{equation} 
	where $k_{\rm e}$ is the inverse compressibility  coefficient (characterizing membrane-cortex elastic tension), $|\Omega_h|$ is the  area of the reference configuration $\Omega_h$  in which  $p_e=0$, c.f. vertex models  (e.g., formula (2.2) in \cite{AltGanSal2015}). 

	
	
	The evolution of the myosin motors density is described by the advection-diffusion equation
	\begin{equation}
	\label{myosin_equations}
	\partial_t m=\Delta m -\div (u m) \quad \text{in} \ \Omega(t)
	\end{equation}
	and no flux boundary condition in the moving domain
	\begin{equation}
	\label{myosin_bc}
	\partial_\nu m =(u\cdot \nu - V_\nu)m \quad\text{on}\ \partial\Omega(t),
	\end{equation}
	where $\nu$ stands for the outward pointing normal vector and $V_\nu$ is the normal velocity of the domain $\Omega(t)$. 
	Finally, we assume continuity of velocities on the boundary 
	\begin{equation}
	\label{KinematicBC}
	V_\nu=u\cdot\nu,
	\end{equation} 
	so that \eqref{myosin_bc} becomes the homogeneous Neumann condition. Combining    \eqref{DarcyLaw}--\eqref{KinematicBC} yields
	a  free boundary model of the cell motility  investigated in this work.  While there are several models of cell motility in literature (both free boundary and phase field models), in this work we perform  analytical study  of stability of stationary and persistently moving states in the model \eqref{DarcyLaw}--\eqref{KinematicBC}. Moreover, the  analysis of the linearized problem can be used to establish local in time  existence and uniqueness of solutions. 
	
	It is convenient to introduce  the potential for the velocity field $u$  using \eqref{DarcyLaw}:
	\begin{equation}\label{phi_def}
	u= \nabla \phi=- \nabla \frac{1}{\zeta} p, \quad  
	\phi:=-\frac{1}{\zeta}(p+p_h),
	\end{equation}
	and  rewrite problem \eqref{DarcyLaw}--\eqref{KinematicBC} in the form
	\begin{equation}
	\label{actflow_in_terms_of_phi}
	\Delta \phi +m=\zeta \phi 
	\quad\text{in}\ \Omega(t),
	\end{equation}
	\begin{equation}\label{actin_bc_potential}
	\zeta\phi={p_{\ast}(|\Omega(t)|)}-\gamma \kappa\quad\text{on}\ \partial\Omega(t),
	\end{equation}
	\begin{equation}\label{actin_bc_normal}
	V_\nu=\partial_\nu \phi \quad\text{on}\ \partial\Omega(t),
	\end{equation}
	\begin{equation}
	\label{myosin_equations_I}
	\partial_t m= \Delta m -\div (m \nabla \phi ), \quad \text{in} \ \Omega(t),
	\end{equation}
	\begin{equation}
	\label{myosin_bc_I}
	\partial_\nu m =0 \quad\text{on}\ \partial\Omega(t),
	\end{equation}
	where 
	we  introduced  the notation 
	\begin{equation}\label{change}
	p_{\ast}:=p_{\rm h}+p_{\rm e}=
	p_{\rm h}-k_{\rm e}( |\Omega(t)|-|\Omega_{\rm h}|)/|\Omega_{\rm h}|
	\end{equation}
	for the sum of
	the hydrostatic pressure $p_{\rm h}$ and the effective elastic restoring force $p_{\rm e}$. We  assume that the area  $|\Omega(t)|$ is  such that (e.g., $|\Omega|$ is close to $|\Omega_{\rm h}|$)
	\begin{equation}
	\label{gelswelling}
	p_{\ast}=p_{\ast}(|\Omega(t)|) >0.
	\end{equation}
	Moreover,  we consider the  coefficient $k_{\rm e}$ to be sufficiently large so that
	it penalizes changes of the area. For instance, it prevents from shrinking of $\Omega$ to a point or from infinite expanding. The precise lower bound  on  $ k_{\rm e}=-|\Omega_{\rm h}|\, p_{\ast}^{\prime}(|\Omega(t)|) $ is given below in \eqref{areashrinkingprevention}.
	
	\begin{rem}\label{evolution}
		In this work we consider  the problem \eqref{actflow_in_terms_of_phi}--\eqref{myosin_bc_I}  as an evolution problem that determines a dynamical system in the phase space of two unknowns $m(x,y, t)$ and  $\Omega (t)$, while the potential $\phi(x,y,t)$ is considered as an additional 
		unknown function defining evolution of the free boundary. Indeed, for given $\Omega (t)$ and $m(x,y,t)$	the function $\phi(x,y,t)$ is obtained as the unique solution of the elliptic problem 
		\eqref{actflow_in_terms_of_phi}--\eqref{actin_bc_potential}, and its normal derivative $\partial_\nu \phi$ 
		defines normal velocity of the boundary $\partial \Omega(t)$ due to \eqref{actin_bc_normal}, see also \eqref{KinematicBC}-\eqref{phi_def}. Problem \eqref{actflow_in_terms_of_phi}--\eqref{myosin_bc_I} is 
		supplied with initial conditions for $m$ and $\Omega$ and it is natural not to include the unknown $\phi$ 
		into the phase space of this evolution problem but rather in the definition of the operator governing the semi-group corresponding to the dynamical system in this phase space that defines the evolution of  $m$ and $\Omega$ (see, e.g. \eqref{operator_form}).
	\end{rem}

	
 For technical simplicity we  first assume that solutions of problem \eqref{actflow_in_terms_of_phi}--\eqref{myosin_bc_I} is symmetric with respect to $x$-axis (it suffiices to assume such a symmetry of initial data).
Subsequently we relax this assumption  in Theorem \ref{ThmNoSymmetry} to obtain a complete characterization of   linear stability of the traveling wave solutions.

	\section{Linear stability analysis of radially symmetric stationary solutions}
	\label{section_lin_an_stst}
	
	In the class of  radially symmetric  stationary  solutions with constant density $m=const$ for a given radius $R>0$  there exists the unique radial  solution of the problem \eqref{actflow_in_terms_of_phi}--\eqref{myosin_bc_I}:
	\begin{align}\label{steady_state}
	\Omega=B_R,\quad m_0:=-\gamma/ R+p_{\ast}(\pi R^2),
	\quad
	\zeta \phi_0={p_{\ast}(\pi R^2)}-\gamma/ R.
	\end{align}

To describe evolution of  perturbations of \eqref{steady_state}, 
	it is convenient to use the polar coordinate system $(r,\varphi)$, 
	\begin{equation}\label{def_of_Omega}
	\Omega=\left\{(x=r\cos\varphi, y=r\sin\varphi); 0\leq r<R+\rho(\varphi,t)\right\}.
	\end{equation}
Then	linearizing problem 
	\eqref{actflow_in_terms_of_phi}--\eqref{myosin_bc_I}
	around a radially symmetric reference stationary solution \eqref{steady_state},  we get the following problem
	\begin{equation}
	\label{RadSymLinearized1}
	\partial_t \rho
=\partial_r \phi \quad \text{on}\ \partial B_R, 
	\end{equation}
	\begin{equation}
	\label{ur_e_dlyacrtin}
	\Delta \phi + m=\zeta \phi \quad\text{in}\ B_R,
	\end{equation}
	\begin{equation}
	\label{krizna_linearized}
	\phi=\frac{p_{\ast}^\prime(\pi R^2)R}{\zeta}\int_{-\pi}^{\pi}\rho(\varphi)d\varphi+\frac{\gamma}{R^2\zeta}(\partial^2 _{\varphi\varphi}\rho+\rho) \quad \text{on}\ \partial B_R,
	\end{equation}
	\begin{equation}
	\label{ur_e_dlya_m}
	\partial_t {m}=\Delta {m} - m_0 \Delta {\phi} \quad\text{in}\ B_R, \quad \partial_r m=0\quad \text{on}\ \partial B_R.
	\end{equation}
This  problem 
can be rewritten in the operator form
	\begin{equation} \label{operator_form}
	\frac{d}{dt}U=\mathcal{A}_{\rm ss} U, 
	\end{equation} 
	where $U=(m,\rho)$, and $\mathcal{A}_{\rm ss}$ is the following operator 
	\begin{equation}
(\mathcal{A}_{\rm ss}U)_m=\Delta m- m_0 \Delta \phi\quad\text{in}\ B_R, \quad (\mathcal{A}_{\rm ss}U)_\rho=\partial_r\phi
\quad\text{on}\ \partial B_R.
	\label{operator1}
	\end{equation}
	Here $\phi$ solves the time independent problem \eqref{ur_e_dlyacrtin}--\eqref{krizna_linearized} for given $m$ and $\rho$.  
	Formulas in \eqref{operator1} define an unbounded operator in $L^2(B_R)\times L^2(\partial B_R)$ whose domain is $H^2(B_R)\cap \{m\, ; \ \partial_r m=0\ \text{on}\ \partial B_R\}\times H^3_{\rm per}(\partial B_R)$.  Using Fourier analysis (see the proof of Theorem \ref{thm_linearizeddisk}) 
one can show that  this operator has a compact resolvent (alternatively, one can establish this fact following the lines of the proof of  Lemma \ref{VspomogLemma}  which does not use radial symmetry).

	
Next we study the spectrum of  the operator ${\mathcal A}_{\rm ss}$. Due to the radial symmetry of the problem this amounts to the Fourier analysis. 
Moreover we will consider only perturbations possessing the reflection symmetry with respect to the $x$-axis. That is we consider 
 Fourier modes $m=\hat{m}(r)\cos (n\varphi)$ and $\rho=\hat{\rho}\cos(n\varphi)$ for integer $n\geq 0$. Notice that the operator  ${\mathcal A}_{\rm ss}$ 
 always has zero eigenvalue of multplicity at least two with eigenvectors $(m=0,\rho=\cos\varphi)$  and  $(m=2\pi R p_\ast^\prime (\pi R^2)+\gamma/R^2, \rho=1)$. The first of these eigenvectors represents infinitesimal shifts in the $x$-direction, and the second one is  obtained by
 taking derivative  (in $R$) of the family of stationary solutions \eqref{steady_state}. Next we introduce the eigenvalue $E(R)$ describing movability of  stationary solutions. Namely, we will see that at the critical radius when $E(R)$ crosses zero a family of traveling wave solutions emerges.

Consider the minimization problem 
		\begin{multline}
		\label{formaquadratic}
		E(R)=-\inf \left\{ E_\zeta (m)\Bigl/\Bigr.\int_{B_R} m^2 dxdy; m\in H^1(B_R),\, m=\hat m(r) \cos\varphi\right\},\\ \text{where}\ E_\zeta (m)=\int_{B_R}\left( |\nabla m|^2 -m_0 m^2
+m_0\zeta|\nabla\phi|^2+m_0\zeta^2 \phi^2\right)dxdy
		\end{multline}
and $\phi$ is the unique solution of the equation $\Delta \phi+m=\zeta\phi$ with the Dirichlet boundary condition 
		$\phi=0$ on $\partial \Omega$.  Minimizing the Rayleigh quotient in \eqref{formaquadratic} yields a minimizer $m$ that satisfies 
 $\Delta m+m_0 m-m_0\zeta\phi=E m$ in $B_{R}$ and $\partial_r m=0$ on $\partial B_R$ (c.f. \eqref{operator1}). In the case when $E\not =0$ one obtains the  $\rho$-component of the
eigenvector by setting $\rho=\partial_r\phi/E$. If $E=0$, then the pair $(m,0)$ is a generalized eigenvector in the Jordan chain generated by infinitesimal shifts.

\begin{rem}\label{movability}
	Because of the  radial symmetry, the spectral analysis of $\mathcal A_{ss}$ is performed via Fourier modes. The Fourier mode with $(m,\rho)=(\hat m (r)\cos\varphi,\hat\rho\cos\varphi)$ is the only mode that corresponds to motion (when the geometrical center of mass of $\Omega$ changes). There are infinitely many eigenvectors within this Fourier mode, each with a different $\hat m(r)$. In particular, we have the zero eigenvalue with its eigenvector corresponding to infinitesimal shifts. Then $E(R)$ is the largest of the remaining eigenvalues. That is why $E(R)$ describes movability of the stationary solutions.
\end{rem}

\begin{lem}
\label{och_polezn_lemma}  Assume that $m_0\leq \zeta$, then $E(R)=0$ if and only if the solution $\Phi_D(r)$ of
\begin{equation}
		\label{BifCond1}
		\frac{1}{r}(r\Phi^\prime_D(r))^\prime-\frac{1}{r^2}\Phi_D(r) +(m_0-\zeta)\Phi_D(r) =m_0 r\quad 0\leq r<R,
		\quad \Phi_D(0)=\Phi_D(R)=0.
		\end{equation}
satisfies  the additional boundary condition  
\begin{equation}
	\label{BifCond1bis}
\Phi_D^\prime(R) =1.
\end{equation}	
Moreover, in this case $E(R)=0$ is a simple eigenvalue of  the variational problem \eqref{formaquadratic} and, up to multiplication by a constant, $m=(\Phi_D(r)-r)\cos\varphi$.
\end{lem}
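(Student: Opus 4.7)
The plan is to convert the Euler--Lagrange system of \eqref{formaquadratic} at $E=0$, restricted to the cosine mode, into a coupled radial ODE system, exhibit the explicit candidate $(\hat m,\hat\phi)=(\Phi_D-r,\Phi_D/m_0)$ as a solution, and then prove that this is the unique (up to a scalar) regular solution compatible with $\hat\phi(R)=0$. The residual free-boundary condition $\hat m'(R)=0$ then collapses to the single scalar identity $\Phi_D'(R)=1$, yielding both the ``if and only if'' and the simplicity.

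First I would substitute $m=\hat m(r)\cos\varphi$, $\phi=\hat\phi(r)\cos\varphi$; using the identity $\Delta(f(r)\cos\varphi)=(f''+f'/r-f/r^2)\cos\varphi$, the Euler--Lagrange equation $\Delta m+m_0 m-m_0\zeta\phi=0$ with $\partial_r m=0$ on $\partial B_R$, coupled to $\Delta\phi+m=\zeta\phi$ with $\phi=0$ on $\partial B_R$, becomes
\[
\hat m''+\frac{\hat m'}{r}-\frac{\hat m}{r^2}+m_0\hat m-m_0\zeta\hat\phi=0,\qquad \hat m'(R)=0,
\]
\[
\hat\phi''+\frac{\hat\phi'}{r}-\frac{\hat\phi}{r^2}+\hat m-\zeta\hat\phi=0,\qquad \hat\phi(R)=0,
\]
together with the regularity conditions $\hat m(0)=\hat\phi(0)=0$ at the coordinate singularity. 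A direct substitution shows that $(\hat m,\hat\phi)=(\Phi_D-r,\Phi_D/m_0)$ solves both ODEs: since $r\cos\varphi$ is harmonic, the $\hat m$-equation reduces to \eqref{BifCond1}, and the $\hat\phi$-equation is \eqref{BifCond1} divided by $m_0$. The data $\Phi_D(R)=0$ give $\hat\phi(R)=0$, $\Phi_D(0)=0$ gives regularity, and $\hat m'(R)=\Phi_D'(R)-1$, so the free-boundary condition $\hat m'(R)=0$ is exactly \eqref{BifCond1bis}; this proves the ``if'' direction and displays the claimed eigenfunction.

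For the ``only if'' direction and simplicity I would prove that the space of regular solutions of the coupled ODEs satisfying $\hat\phi(R)=0$ is one-dimensional; since it contains $(\Phi_D-r,\Phi_D/m_0)$ it must equal its scalar multiples, so the residual condition $\hat m'(R)=0$ forces $\Phi_D'(R)=1$ and yields a one-dimensional eigenspace. The zeroth-order coupling is via the constant matrix $N=\begin{pmatrix} m_0 & -m_0\zeta\\ 1 & -\zeta\end{pmatrix}$ with eigenvalues $0$ and $m_0-\zeta$ and eigenvectors $(\zeta,1)^\top$, $(m_0,1)^\top$. When $m_0<\zeta$, the substitution $\hat m=\zeta u+m_0 v$, $\hat\phi=u+v$ decouples the system into $u''+u'/r-u/r^2=0$ and $v''+v'/r-v/r^2-(\zeta-m_0)v=0$, whose regular-at-origin solutions are $u=c_1 r$ and $v=c_2 I_1(\sqrt{\zeta-m_0}\,r)$. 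Because $I_1(\sqrt{\zeta-m_0}\,R)\neq 0$, the constraint $\hat\phi(R)=c_1 R+c_2 I_1(\sqrt{\zeta-m_0}\,R)=0$ is a nondegenerate linear condition that cuts the two-parameter family of regular solutions to a one-parameter family, as required.

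The main obstacle is the borderline case $m_0=\zeta$, where $N$ has only one eigenvector and the diagonalization breaks down; there I would pass to Jordan form via the generalized eigenvector $(1,0)^\top$, obtaining the triangular system $Dw_2=0$, $Dw_1=-w_2$ with $D=\partial_r^2+r^{-1}\partial_r-r^{-2}$. The regular solutions are $w_2=c_2 r$ and $w_1=c_1 r-c_2 r^3/8$, and the endpoint condition $\hat\phi(R)=w_1(R)=0$ again yields a one-parameter family, concluding the uniqueness argument uniformly in $m_0\leq\zeta$.
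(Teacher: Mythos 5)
Your proposal is correct and is conceptually the same argument as the paper's, just packaged differently. The paper's proof hinges on the single observation that $\Delta(m-m_0\phi)=0$ when $E=0$ (substitute $m=\zeta\phi-\Delta\phi$ into the Euler--Lagrange equation), so that $m-m_0\phi$ is a harmonic $\cos\varphi$ mode, hence a multiple of $r\cos\varphi$; your diagonalization of $N$ makes exactly this visible systematically, since the null eigenvector $(\zeta,1)^\top$ yields the coordinate $u\propto\hat m-m_0\hat\phi$ with $Du=0$. What your route buys is an explicit dimension count (solve the two decoupled Bessel/Euler ODEs, impose regularity and $\hat\phi(R)=0$, read off one remaining free constant) in place of the paper's ``clearly $C\neq0$'' step, which the paper justifies via the implicit energy argument that $m=m_0\phi$ with $m_0\leq\zeta$ forces $\phi\equiv0$; your version trades that for the positivity of $I_1$, but note that the constraint $c_1R+c_2I_1(\sqrt{\zeta-m_0}R)=0$ is nondegenerate in $(c_1,c_2)$ regardless of the sign of $I_1$, so that remark is unnecessary. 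The cost is that the explicit diagonalization breaks down at $m_0=\zeta$ and you must handle it separately via the Jordan block, whereas the paper's single line $\Delta(m-m_0\phi)=0$ covers both cases at once. One point that neither you nor the paper's one-sentence ``if'' direction fully spells out: exhibiting $(\Phi_D-r,\Phi_D/m_0)$ shows only that $0$ is \emph{an} eigenvalue (so $E(R)\geq0$); concluding that it is the \emph{maximal} one, i.e.\ $E(R)=0$, requires the comparison arguments of Lemma \ref{motilityLemma} (or an analogue of its Hopf-type maximum-principle step applied at $\Phi_D'(R)=1$). Since the paper is equally terse here, this is not a defect introduced by your approach, but it is worth flagging if you want a self-contained proof.
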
	
	\begin{proof} Since $m=\hat{m}(r)\cos\varphi$, the solution $\phi$ of  $\Delta \phi+m=\zeta\phi$ in $B_R$, $\phi=0$  on $\partial B_R$ has the representation 
$\phi=\hat\phi(r)\cos\varphi$. Now assuming that $E(R)=0$ we have $\Delta (m-m_0\phi)=0$, therefore $m=m_0(\phi -Cr\cos\varphi)$. Clearly $C\not=0$, 
therefore, changing the normalization if necessary, we can assume that  $C=1$.  Then $\phi=\Phi_D(r)$, and since $\partial_r m=0$ on $\partial B_R$ we obtain  
\eqref{BifCond1bis}. Inversely, if the solution $\Phi_D(r)$ of
		\eqref{BifCond1} satisfies \eqref{BifCond1bis}, setting $m=m_0(\Phi_D(r)-r)\cos\varphi$ we have $\phi=\Phi_D(r)\cos\varphi$, and 
\begin{equation*}
E(R) \geq-\frac{1}{\int_{B_R} m^2 dxdy}E_\zeta(m) =0, \quad \text{thus}\ E(R)=0.
\end{equation*}
Lemma \ref{och_polezn_lemma} is proved.
	\end{proof}

\begin{lem} 
\label{motilityLemma}
Assume that $m_0\leq \zeta$, then $E(R)>0$, $E(R)=0$ and $E(R)<0$ if and only if $\Phi_D^\prime(R)>1$, $\Phi_D^\prime(R)=1$ and $\Phi_D^\prime(R)<1$, correspondingly, where $\Phi_D(r)$ is the solution of  \eqref{BifCond1}.	
\end{lem}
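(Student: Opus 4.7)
The plan is to combine the variational characterization \eqref{formaquadratic} of $E(R)$ with a specific trial function, and then to propagate the resulting partial sign information globally by a continuity argument. The natural trial function is $m_{\rm trial} = m_0(\Phi_D(r) - r)\cos\varphi$, which played a central role in Lemma \ref{och_polezn_lemma}: it satisfies the interior equation $\Delta m + m_0 m - m_0\zeta\phi = 0$ with $\phi = \Phi_D(r)\cos\varphi$, but fails the Neumann boundary condition $\partial_r m = 0$ unless $\Phi_D'(R) = 1$. Using the interior equation, a standard integration by parts reduces the energy to a boundary integral,
\[
E_\zeta(m_{\rm trial}) = \int_{\partial B_R} m_{\rm trial}\,\partial_\nu m_{\rm trial}\,ds = -\pi R^2 m_0^2(\Phi_D'(R) - 1),
\]
while $\int_{B_R} m_{\rm trial}^2\,dx\,dy = \pi m_0^2 I(R)$ with $I(R) := \int_0^R (\Phi_D(r) - r)^2\, r\,dr > 0$. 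The variational principle then yields the key lower bound
\[
E(R) \geq \frac{R^2(\Phi_D'(R) - 1)}{I(R)}.
\]
Since $I(R) > 0$, this immediately gives $\Phi_D'(R) > 1 \Rightarrow E(R) > 0$, and together with Lemma \ref{och_polezn_lemma} also covers the case $\Phi_D'(R) = 1 \Leftrightarrow E(R) = 0$.

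The remaining implication $\Phi_D'(R) < 1 \Rightarrow E(R) < 0$ is the main content and is \emph{not} accessible from the trial function alone (the bound becomes vacuous). To handle it, I would use continuity of $R \mapsto E(R)$ (which follows from standard perturbation theory for the self-adjoint operator with compact resolvent on the $\cos\varphi$ sector), combined with the asymptotic behavior as $R \to 0^+$. For small $R$, the expansion $\Phi_D(r) \sim m_0 r(r^2 - R^2)/8$ obtained from \eqref{BifCond1} yields $\Phi_D'(R) \to 0 < 1$; simultaneously, the first Neumann eigenvalue of $-\Delta$ in the $\cos\varphi$ sector scales as $c/R^2 \to +\infty$, while the quadratic form satisfies $E_\zeta(m) \geq \int_{B_R}(|\nabla m|^2 - m_0 m^2)\,dx\,dy$ (the remaining contribution equals $m_0\zeta\int m\phi = m_0\zeta(\zeta\int \phi^2 + \int |\nabla\phi|^2) \geq 0$). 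Hence $E(R) \to -\infty$, so both quantities are strictly negative for all sufficiently small $R$. By Lemma \ref{och_polezn_lemma} the zero sets of $R\mapsto E(R)$ and $R\mapsto \Phi_D'(R) - 1$ coincide, so each is of constant sign on every connected component of $\{R>0:\Phi_D'(R)\neq 1\}$, and that sign is negative on the component containing small $R$.

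What remains is to match the signs across any other crossing point. Differentiating the lower bound $E(R) \geq g(R) := R^2(\Phi_D'(R) - 1)/I(R)$ at a point $R_*$ where both vanish: since $(E-g)(R_*) = 0$ and $E-g\geq 0$, $R_*$ is a local minimum, so $E'(R_*) = g'(R_*) = R_*^2 (\Phi_D')'(R_*)/I(R_*)$. Thus $E$ and $\Phi_D' - 1$ cross zero in the same direction at any transversal crossing, which together with the negative-sign base case at small $R$ forces global agreement of the signs. The principal obstacle is precisely the direction $\Phi_D'(R) < 1 \Rightarrow E(R) < 0$: the trial function gives only a trivially satisfied bound there, so one is forced into a global continuity plus derivative-matching argument. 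A cleaner alternative, if it can be established, is monotonicity of $R \mapsto \Phi_D'(R)$, which would make the zero of $\Phi_D'(R) - 1$ unique and close the argument directly from the trial function inequality together with the $R \to 0^+$ asymptotics.
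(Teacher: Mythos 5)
Your proof reproduces the paper's treatment of the easy direction exactly (same trial function, same integration by parts, same invocation of Lemma~\ref{och_polezn_lemma}). For the hard direction, $\Phi_D'(R)<1 \Rightarrow E(R)<0$, you take a genuinely different route from the paper, and it has a real gap.

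The paper argues for a \emph{fixed} $R$ and $m_0$, by contradiction, running continuity in the auxiliary parameter $\zeta$: if $E(R,\zeta)>0$, then since $E(R,\cdot)$ becomes negative for large $\zeta$ there is $\tilde\zeta>\zeta$ with $E(R,\tilde\zeta)=0$; Lemma~\ref{och_polezn_lemma} converts this into $\tilde\Phi_D'(R)=1$ for the modified problem, and then the maximum principle applied to $\tilde\Phi_D-\Phi_D$ (using $\tilde\zeta>\zeta$ and $\tilde\Phi_D<0$) forces $\Phi_D'(R)\geq 1$, a contradiction. This works pointwise in $R$, requires no information about how $E$ or $\Phi_D'$ vary along the family, and produces a clean, quantitative comparison. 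Your route instead runs continuity in $R$ and tries to propagate the sign agreement from small $R$ outward. Two problems. First, the derivative-matching argument $E'(R_*)=g'(R_*)$ at a shared zero $R_*$ of $E$ and $\Phi_D'-1$ only controls the sign change when the crossing is transversal, i.e.\ $\tfrac{d}{dR}\Phi_D'(R_*)\neq 0$; at a degenerate zero the conclusion that the two functions change sign together does not follow, and you have no tool (such as the paper's maximum-principle comparison) to rule out a zero of $\Phi_D'-1$ at which $E$ crosses and $\Phi_D'-1$ does not, or vice versa. You flag this yourself, but the gap is essential: without either transversality or the suggested (and unproved) global monotonicity of $R\mapsto\Phi_D'(R)$, the sign-propagation does not close. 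Second, because $m_0$ appears in the definitions of both $E_\zeta$ and $\Phi_D$ and in the physically relevant family $m_0=m_0(R)=p_\ast(\pi R^2)-\gamma/R$, running continuity in $R$ is ambiguous: if $m_0$ is frozen at its value for the target radius, $E(R)$ and $\Phi_D$ are no longer the paper's objects at intermediate radii; if $m_0=m_0(R)$ is allowed to vary, then for small $R$ one has $m_0(R)<0$, your intermediate bound $E_\zeta(m)\geq\int(|\nabla m|^2-m_0 m^2)$ flips sign (its proof uses $m_0\geq 0$), and the hypothesis $m_0\leq\zeta$ of the lemma need not hold throughout the interval. The paper sidesteps both issues by keeping $R$ and $m_0$ fixed and deforming only the auxiliary $\zeta$, for which no such complications arise.

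In short: the half of your argument based on the trial function matches the paper; for the other half, what is missing is precisely the maximum-principle comparison between $\Phi_D$ at the two values of $\zeta$, which is the step that makes the implication $\Phi_D'(R)<1\Rightarrow E(R)<0$ a one-radius statement rather than a global continuity argument with transversality assumptions.
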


\begin{proof} Assume that  $\Phi_D^\prime(R)>1$ and consider the test function $m:=m_0(\Phi_D(r)-r)\cos\varphi$. Observe that
$\Delta(\Phi_D(r)\cos\varphi)+m=\zeta \Phi_D(r)\cos\varphi $, therefore we have, integrating by parts,
\begin{multline*}
E(R)\int_{B_R} m^2 dxdy \geq-E_\zeta(m) =-\int_{\partial B_R}m \partial_r m d s +\int_{B_R}(\Delta m+m_0 m-\zeta m_0\Phi_D(r)\cos\varphi )m dxdy
\\=\pi R^2 m_0^2 (\Phi_D^\prime(R)-1)>0.
\end{multline*}
The case  $\Phi_D^\prime(R)=1$ is considered in Lemma \ref{och_polezn_lemma}. Finally we prove that $E(R)<0$  if  $\Phi_D^\prime(R)<1$. We argue by contradiction. Assume that $E(R)>0$ and notice that allowing the parameter $\zeta$ in \eqref{formaquadratic}   increase we have a continuous function $E(R,\zeta)$ which becomes negative for sufficiently large $\zeta$. To prove the latter clame observe that otherwise there exists a sequence $\zeta_j\to \infty$ and $m_j=\hat m_j(r)\cos\varphi$ such that $\|m_j\|_{L^2(B_R)}=1$ and $E_{\zeta_j}(m_j)\leq 0$. This gives the a priori bound 
		\begin{equation}\label{estimate_with_m_0}
		\int_{B_R}|\nabla m_j|^2 \,dxdy+m_0\zeta_j\int_{B_R}\left(|\nabla \phi_j|^2+ \zeta_j \phi^2_j\right) \, dxdy\leq m_0,
		\end{equation}
		where $\Delta \phi_j+m_j=\zeta_j \phi_j$ in $B_R$, $\phi_j=0$ on $\partial B_R$. 
Let us show  that $\zeta_j\phi_j - m_j\rightharpoonup 0$ weakly in $L^2(B_R)$. 
Indeed, multiply the equation $\Delta \phi_j+m_j=\zeta_j \phi_j$ by a test function $v\in H^1(B_R)$ and integrate over $B_R$ to get
		\begin{equation}\label{vseravnonulyu}
		\int_{B_R}  \nabla \phi_j\cdot  \nabla v \, dxdy + \int_{B_R}(\zeta_j\phi_j-m_j)v\, dxdy =0,
		\end{equation}
		and pass to the limit in this identity as $j\to \infty$. By \eqref{estimate_with_m_0}) we have $\|\nabla \phi_j\|_{L^2(B_R)}<1/\sqrt{\zeta_j}$ therefore the second term in \eqref{vseravnonulyu} tends to zero and thus the weak convergence  $\zeta_j\phi_j - m_j\rightharpoonup 0$ is established. 
It follows from  \eqref{estimate_with_m_0} that there exists $m^\ast \in H^1(B_R)$ such that, up to a subsequence,  $m_j\to m^\ast$ strongly in $L^2(B_R)$, consequently  
$$
\liminf\limits_{j\to \infty} \|\zeta_j \phi_j\|^2_{L^2(B_R)}-1=\liminf\limits_{j\to \infty} (\|\zeta_j \phi_j\|^2_{L^2(B_R)}-\|m_j\|^2_{L^2(B_R)} )\geq 0.
$$
 Then \eqref{estimate_with_m_0} implies  that $\limsup _{j\to \infty}\int_{B_R}|\nabla m_j|^2 \, dxdy=0$, i.e. $m^\ast\equiv \text{const}$. On the other hand 
$m^\ast$ admits the representation  $m^\ast=\hat m^\ast(r)\cos\varphi$. Therefore $m^\ast=0$ which contradicts the normalization $\|m^\ast\|_{L^2(B_R)}=1$.

Thus, $\min E_{\hat\zeta}(m)/\int_{B_R} m^2 dxdy=0$ for some $\tilde\zeta >\zeta$.
		Then by Lemma \ref{och_polezn_lemma}
		the solution of 
		\begin{equation}
		\label{BifCond1aux}
		\frac{1}{r}(r\tilde\Phi^\prime_D(r))^\prime-\frac{1}{r^2} \tilde\Phi_D(r) +(m_0-\tilde\zeta)\tilde\Phi_D(r) =m_0 r\quad 0\leq r<R,
		\quad\tilde\Phi_D(0)=\tilde\Phi_D(R)=0.
		\end{equation}	
		satisfies
		\begin{equation}
		\label{BifCond1bisaux}
		\tilde\Phi_D^\prime(R) =1.
		\end{equation}	
		But  $-\frac{1}{r}(r(\tilde\Phi_D^\prime(r)-\Phi_D^\prime(r)))^\prime+\frac{1}{r^2}( \tilde\Phi_D(r)-\Phi_D(r)) +(\zeta-m_0)
		(\tilde\Phi_D(r)-\Phi_D(r)) =(\zeta-\tilde\zeta)\tilde\Phi_D>0$  for $0< r<R$, and $\tilde\Phi_D(0)-\Phi_D(0)=\tilde\Phi_D(R)-\Phi_D(R)=0$. By the maximum principle $\tilde\Phi_D(r)-\Phi_D(r)> 0$ for $0< r<R$, therefore  
		$\tilde\Phi_D^\prime(R)\leq \Phi_D^\prime(R)$, i.e. $\Phi_D^\prime(R)\geq 1$, contradiction.
Lemma \ref{motilityLemma} is proved.
\end{proof}

The following result provides sufficient conditions for linear stability of radial stationary solutions.

	\begin{thm} 
		\label{thm_linearizeddisk} Assume that  the myosin density $m_0$ is bounded above by the fourth eigenvalue of the operator $-\Delta$ in $B_R$ with the homogeneous Neumann boundary condition on $\partial B_R$, also assume that $\zeta>m_0$ and $p_{\ast}^\prime(\pi R^2)$ satisfies 
		\begin{equation}
		\label{areashrinkingprevention}
		p_{\ast}^\prime(\pi R^2)<
		-\left(\gamma/R +2m_0\right)/(2\pi R^2). 
		\end{equation}
		Then $\mathcal{A}_{ss}$ has zero eigenvalue with multiplicity two if  $E(R)\not=0$ or three if $E(R)=0$, and all its eigenvalues other than $0$ or
$E(R)$ have negative real parts.
	\end{thm}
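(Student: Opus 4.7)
The plan is to exploit the rotational symmetry of the reference solution and decompose $\mathcal{A}_{ss}$ into invariant subspaces indexed by the azimuthal wavenumber $n$, writing $m = \hat m_n(r)\cos(n\varphi)$ and $\rho = \hat\rho_n\cos(n\varphi)$. Each Fourier mode then reduces to a one-dimensional radial eigenvalue problem with compact resolvent (by standard elliptic regularity), so the spectrum of $\mathcal{A}_{ss}$ is discrete. I first locate the zero eigenvectors. For $n = 0$, differentiating the stationary family \eqref{steady_state} with respect to $R$ gives the eigenvector $(2\pi R p_{\ast}^\prime(\pi R^2) + \gamma/R^2,\,1)$. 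For $n = 1$, the translation in the $x$-direction gives the shift eigenvector $(0,\cos\varphi)$; when $E(R) = 0$, the pair $(m_0(\Phi_D(r) - r)\cos\varphi,\,0)$ supplied by Lemma \ref{och_polezn_lemma} satisfies $\mathcal{A}_{ss}(m,0) = (0,\cos\varphi)$, giving a rank-two Jordan block and raising the algebraic multiplicity of $0$ from two to three.

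The core task is to show $\mathrm{Re}\,\lambda < 0$ for all remaining eigenvalues. Within a fixed mode $n$, testing the first equation against $\bar m$ (using $\partial_r m = 0$ and $\Delta \phi = \zeta \phi - m$) and the elliptic equation for $\phi$ against $\bar\phi$ (using $\phi|_{\partial B_R} = c_n\hat\rho_n\cos(n\varphi)$ where $c_n = \gamma(1-n^2)/(R^2\zeta)$ for $n\geq 1$ and $c_0 = (2\pi R p_{\ast}^\prime(\pi R^2) + \gamma/R^2)/\zeta$, together with $\partial_r\phi = \lambda\rho$) produces the identity
\begin{equation*}
\lambda\|m\|^2 - m_0\zeta c_n \bar\lambda\|\rho\|_{L^2(\partial B_R)}^2 = -\|\nabla m\|^2 + m_0\|m\|^2 - m_0\zeta\|\nabla\phi\|^2 - m_0\zeta^2\|\phi\|^2.
\end{equation*}
For $n \geq 2$, $c_n < 0$; separating real and imaginary parts of $\lambda = a + ib$ forces $b = 0$ and leaves a coercive identity for $a$, which combined with the Poincar\'e inequality $\|\nabla m\|^2 \geq \mu_{n,1}^2\|m\|^2 \geq \mu_{2,1}^2\|m\|^2$ in the $\cos(n\varphi)$-subspace (where $\mu_{2,1}^2$ is precisely the fourth Neumann eigenvalue of $-\Delta$ on $B_R$ whose upper bound by $m_0$ is hypothesized) forces $a < 0$. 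For $n = 0$, the hypothesis \eqref{areashrinkingprevention} is equivalent to $-m_0\zeta c_0 > 2m_0^2/R > 0$, and the same identity, restricted to the complement of the one-dimensional kernel spanned by $(A,1)$ with $A = 2\pi R p_{\ast}^\prime(\pi R^2) + \gamma/R^2$, again yields $a < 0$ (the dominant Poincar\'e inequality on the orthogonal complement of constants is governed by $\mu_{0,1}^2 > \mu_{2,1}^2 > m_0$).

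For $n = 1$ the analysis is self-adjoint: $c_1 = 0$ implies $\phi|_{\partial B_R} = 0$, which decouples $\phi$ from $\rho$ and reduces the eigenvalue problem to $L\hat m = -\lambda\hat m$ with $L = -\Delta - m_0 I + m_0\zeta(-\Delta + \zeta)_D^{-1}$ on the $\cos\varphi$-subspace. The variational formula \eqref{formaquadratic} together with Lemma \ref{motilityLemma} identifies the bottom of the spectrum of $L$ as $-E(R)$, which corresponds to the eigenvalue $E(R)$ of $\mathcal{A}_{ss}$; a Courant-type argument with test space orthogonal to the first radial Neumann eigenfunction $J_1(\mu_{1,1}r)\cos\varphi$ shows that the assumption $m_0 < \mu_{2,1}^2 < \mu_{1,2}^2$ keeps all higher eigenvalues of $L$ strictly positive, so the remaining eigenvalues of $\mathcal{A}_{ss}$ in this mode are strictly negative. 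The main obstacle is the non-self-adjoint coupling in the modes $n = 0$ and $n \geq 2$, where $\lambda$ appears on both sides of the boundary integrals through the free-boundary dynamics; a careful balancing of test functions is needed to obtain the clean dissipative identity above, and the sharpness of the hypotheses (the fourth-Neumann-eigenvalue bound on $m_0$ and the coercivity bound \eqref{areashrinkingprevention} on $p_{\ast}^\prime$) reflects the delicate competition between destabilizing myosin contractility and the stabilizing surface-tension and elastic restoring forces.
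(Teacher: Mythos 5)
Your decomposition by Fourier modes, the identification of the zero eigenvectors in modes $n=0$ and $n=1$, and the treatment of modes $n\geq 2$ and $n=1$ all match the paper's approach (the mode $n=1$ argument via the self-adjoint operator $L$ is equivalent to the paper's Courant minimax computation). Two issues remain, one minor and one genuine.

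The minor issue: in the mode $n\geq 2$ you claim that the identity $\lambda A + \bar\lambda B = C$ (with $A,B>0$, $C$ real and negative) ``forces $b=0$.'' Taking imaginary parts only gives $b(A-B)=0$, which does not force $b=0$ unless you know $A\not=B$. Fortunately this is irrelevant: taking real parts already gives $\mathrm{Re}\,\lambda\,(A+B)=C<0$, hence $\mathrm{Re}\,\lambda<0$, which is all that is needed.

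The genuine gap is in mode $n=0$. Your proposed identity is the same as for $n\geq 2$ with $c_0$ in place of $c_n$:
\begin{equation*}
\lambda\|m\|^2 - m_0\zeta c_0\bar\lambda\|\rho\|_{L^2(\partial B_R)}^2 = -\|\nabla m\|^2 + m_0\|m\|^2 - m_0\zeta\|\nabla\phi\|^2 - m_0\zeta^2\|\phi\|^2,
\end{equation*}
and you then invoke the Poincar\'e inequality on the ``orthogonal complement of constants'' to declare the right side negative. But a radially symmetric eigenfunction $m$ belonging to a nonzero eigenvalue does \emph{not} satisfy $\langle m\rangle=0$ (for $m$ close to a nonzero constant the right side above is actually positive), and restricting to the complement of the kernel is not a legitimate operation here because $\mathcal{A}_{ss}$ is not self-adjoint: its nonzero eigenvectors are not $L^2$-orthogonal to the kernel eigenvector $(2\pi R p_\ast'+\gamma/R^2,1)$, and that complement is not an invariant subspace. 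The paper's $n=0$ argument is structurally different: one must test the equations against $\bar m-\langle\bar m\rangle$ and $\phi-\langle\phi\rangle$ from the start, and then --- crucially --- eliminate $\langle\phi\rangle$ by integrating the two bulk equations over $B_R$, obtaining $\langle\phi\rangle=\frac{2}{R\zeta}(\lambda-m_0)\rho$ (equation \eqref{RAVENSTVO_o}). Substituting this into the boundary term produces the different identity \eqref{DlinnayaOkonchatel'nayaOtsenka}, which contains an extra nonnegative term $4\pi m_0|\lambda|^2|\rho|^2$ that your version omits; only with this term present (together with \eqref{areashrinkingprevention}) does the sign argument close. You also need to rule out a generalized eigenvector for $\lambda=0$ in this mode, which the paper does by showing that if one existed then $0=\pi R(2\pi R^2 p_\ast'+\gamma/R+2m_0)$, contradicting \eqref{areashrinkingprevention} --- this step is needed for the stated algebraic multiplicity of the zero eigenvalue.
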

	

 This theorem shows that the linear stability of  stationary solutions can be described  in terms of the eigenvalue $E(R)$ only.  In order to  determine stability/instability of the original nonlinear problem \eqref{actflow_in_terms_of_phi}--\eqref{myosin_bc_I}, an additional consideration is necessary because of the zero eigenvalue.
	 This eigenvalue has multiplicity two; the  corresponding two eigenvectors 
are due to the  shifts and the derivative of stationary  solutions \eqref{steady_state} in $R$ (or equivalently in $M$). These eigenvectors generate the slow  manifold 
formed by  stationary solutions \eqref{steady_state}  and their shifts. 
While in general transition from linear to nonlinear stability in presence of zero eigenvalue may be  challenging, here  the conservation of  total myosin mass property together with invariance with respect to shifts  can be used to control the   eigenvectors for the zero eigenvalue.  This can be done, for instance, by generalizing the techniques from  \cite{BazFri2003} developed for the tumor growth problem
where authors deal with 2D slow manifold of shifts of a stationary solution.

	\begin{rem}\label{R_M} Due to \eqref{steady_state} the total myosin mass of a radial stationary solution is $M=\pi R^2 m_0=\pi R^2 p_\ast(\pi R^2)-\pi R\gamma$. Then the condition \eqref{areashrinkingprevention} is equivalent to 
		\begin{equation}
		M'(R)<0,
		\end{equation}
		so $M(R)$ is decreasing. Notice that if $M^\prime(R)\not=0$ then (locally) stationary solutions can be reparametrized by $M$.
	\end{rem}

	\begin{proof} Let $\lambda$ be an eigenvalue corresponding to an eigenvector $m=\hat m_n(r)\cos n\varphi$, $\rho=\hat\rho_n\cos n\varphi$ with $n\geq 2$. 
Multiply the equation
		$ 
		\lambda m=\Delta m +m_0 m-m_0 \zeta \phi$  
  by the complex conjugate $\overline {m}$  of $m$ and integrate over $B_R$,
		\begin{equation}
		\label{Vychisleniya_0}
		\lambda \int_{B_R} |m|^2dxdy=-\int_{B_R}|\nabla m|^2dxdy +m_0 \int_{B_R} |m|^2dxdy
		-m_0\zeta \int_{B_R} \phi \overline {m}dxdy.
		\end{equation}
		Now multiply the equation $\overline{m}=\zeta\overline{\phi}-\Delta \overline{\phi}$   by $m_0\zeta\phi$ and integrate over $B_R$, then we obtain the following representation for the last term in \eqref{Vychisleniya_0}:
		\begin{equation}\label{eq:phi_eq_multiplied_and_integrated}
		m_0\zeta \int_{B_R} \phi \overline {m}dxdy=m_0\zeta^2 \int_{B_R} |\phi|^2 dxdy+m_0\zeta\int_{B_R}|\nabla \phi|^2dxdy-m_0\zeta\int_{\partial B_R} \phi {\partial_r \overline{\phi}} d s.
		\end{equation}
		Since $\partial_r \overline{\phi}=\overline{\lambda}\overline{\rho}$ and (by virtue of \eqref{krizna_linearized})
		$\overline \rho(\varphi)=\frac{R^2\zeta}{\gamma (1-n^2)}\overline\phi(R,\varphi)$ equality \eqref{Vychisleniya_0} rewrites as
		\begin{equation}\label{eq:SS_high_mode_eval_sign}
		\begin{split}
		\lambda \int_{B_R} |m|^2dxdy +\overline{\lambda} \frac{m_0 R^2\zeta^2}{\gamma(n^2-1)}\int_{\partial B_R} |\phi|^2d\sigma=&-\int_{B_R}|\nabla m|^2dxdy+m_0 \int_{B_R} |m|^2dxdy\\ 
		&- m_0\zeta \int_{B_R} |\nabla \phi|^2dxdy-m_0\zeta^2 \int_{B_R} |\phi|^2dxdy.
		\end{split}
		\end{equation} 
Notice that, thanks to the assumption that $m_0$ is bounded by the fourth eigenvalue of $-\Delta$ in $B_R$ with homogeneous Neumann boundary condition, we have
		\begin{equation}
		\int_{B_R}|\nabla m|^2dxdy-m_0 \int_{B_R} |m|^2dxdy\geq 0.
		\label{zvezda}
		\end{equation}
		Therefore the right hand side of \eqref{eq:SS_high_mode_eval_sign} is negative, so the real part of $\lambda$ is also negative.
		
Next observe that all the eigenvalues whose corresponding eigenvectors have the form $m=\hat m(r)\cos \varphi$, $\rho=\hat\rho\cos \varphi$ 	
are described by the Courant minimax principle, 
\begin{equation*}
		\lambda_j=-\sup_{{\rm codim} S=j-1}\inf_{m\in S}\frac{E_\zeta (m)}{\int_{B_R} m^2 dxdy},\quad\text{where $S$ is a subspace of} \ \left\{ m\in H^1(B_R),\, m=\hat m(r) \cos\varphi\right\}.
		\end{equation*}
Since $E_\zeta (m)<\int_{B_R} |\nabla m|^2 dxdy-m_0\int_{B_R} m^2 dxdy$ for $m\not=0$, the $j$-th eigenvalue $\lambda_j$  is bounded by 
the $j$-th eigenvalue
of the restriction of the operator $\Delta+m_0$ in $B_R$ with the homogeneous Neumann boundary condition to the space of functions of the form  
$m=\hat m(r)\cos \varphi$. By the assumption the second eigenvalue of the latter operator is non positive,  therefore $\lambda_2< 0$ (while $\lambda_1=E(R)$).

Consider finally an eigenvalue $\lambda$ corresponding to a radially symmetric eigenvector. 
%
%
		We have on $\partial B_R$
		\begin{equation}
		\phi=( \gamma/ R^2+2\pi Rp_{\ast}^\prime(\pi R^2))\rho /\zeta, \quad 
		\lambda\rho=
		\partial_r \phi= \frac{\lambda\zeta}{\gamma/R^2+2\pi R p_{\ast}^\prime(\pi R^2)}\phi.
		\label{zvezdavkvadrate}
		\end{equation}
		Multiply the equations $\lambda m = \Delta m+m_0 m -m_0\zeta \phi$ and 
		$-\Delta \overline{\phi}+\zeta \overline{\phi} =\overline{m}$ by $\overline{m}-\langle\overline{m}\rangle$
		and $\phi-\langle \phi\rangle$, where $\langle \overline{m}\rangle$, $\langle \phi \rangle$ denote mean values 
		of $\overline{m}$, $\phi$ (over $B_R$), and integrate over $B_R$. Using integration by parts we obtain equations analogous to \eqref{Vychisleniya_0}-\eqref{eq:phi_eq_multiplied_and_integrated} with $m-\langle m\rangle$ and $\phi-\langle\phi\rangle$ instead of $m$ and $\phi$. Notice that
		\begin{equation}
		\int_{B_R}\phi(\bar m-\langle\bar m\rangle)\,dx\,dy=\int_{B_R}(\phi-\langle\phi\rangle)\bar m\,dx\,dy,
		\end{equation} this leads to
		\begin{equation}
		\label{DlinnayaOtsenkaNoPonyatnaya}
		\begin{aligned}
		\lambda \int_{B_R} |m-\langle{m}\rangle|^2dxdy =&-\int_{B_R}|\nabla m|^2dxdy+m_0 \int_{B_R} |m-\langle{m}\rangle|^2dxdy
		\\&- m_0\zeta \int_{B_R} \left( |\nabla \phi|^2+\zeta |\phi-\langle \phi\rangle|^2\right) dxdy
		+m_0\zeta\int_{\partial B_R} (\phi-\langle \phi\rangle) \partial_r \overline{\phi} d s.
		\end{aligned} 
		\end{equation}
		Assume that $\lambda\not=0$. Then 
		we can evaluate $\langle \phi\rangle$ in terms of $\rho$, integrating the equations 
		$\lambda m = \Delta m+m_0 m -m_0\zeta \phi$ and 
		$-\Delta \overline{\phi}+\zeta \overline{\phi} =m$ over $B_R$ and eliminating $\langle m \rangle$:
		\begin{equation}
		\label{RAVENSTVO_o}
		\langle \phi\rangle= \frac{1}{\pi R^2\zeta}(1-m_0/\lambda)
		\int_{\partial B_R}\partial_r\phi d s =  \frac{2}{ R\zeta}(\lambda-m_0)\rho.
		\end{equation}
		Now we
		use \eqref{zvezdavkvadrate} and \eqref{RAVENSTVO_o} to rewrite the last term in \eqref{DlinnayaOtsenkaNoPonyatnaya}
		as 
	\begin{equation}
		\label{RAV_ooo}
		m_0\zeta\int_{\partial B_R} (\phi-\langle \phi\rangle) \partial_r \overline{\phi} d s=
		-4\pi m_0 |\lambda|^2 |\rho|^2+2\pi m_0\overline{\lambda}|\rho|^2\left(\gamma /R+2m_0+2\pi R^2 p_{\ast}^\prime (\pi R^2)\right).
	\end{equation} 
	Substitute \eqref{RAV_ooo}  into  \eqref{DlinnayaOtsenkaNoPonyatnaya}, as the result we get 
\begin{equation}
		\label{DlinnayaOkonchatel'nayaOtsenka}
		\begin{aligned}
		\lambda \int_{B_R} &|m-\langle{m}\rangle|^2dxdy+4\pi m_0 |\lambda|^2 |\rho|^2-2\pi m_0\overline{\lambda}|\rho|^2\left(\gamma /R+2m_0+2\pi R^2 p_{\ast}^\prime (\pi R^2)\right) =
\\
&
-\int_{B_R}|\nabla m|^2dxdy+m_0 \int_{B_R} |m-\langle{m}\rangle|^2dxdy
		- m_0\zeta \int_{B_R} \left( |\nabla \phi|^2+\zeta |\phi-\langle \phi\rangle|^2\right) dxdy.
		\end{aligned} 
		\end{equation}
Thanks to the radial symmetry of $m$ the function $m-\langle{m}\rangle$ is orthogonal (with respect to the standard inner product in  $L^2(B_R)$) to three first eigenfunctions of the operator $\Delta$ with the homogeneous Neumann boundary condition. Therefore 
$\int_{B_R}|\nabla m|^2dxdy-m_0 \int_{B_R} |m-\langle{m}\rangle|^2dxdy\geq 0$. Thus \eqref{DlinnayaOkonchatel'nayaOtsenka} implies that 
the real part of $\lambda$ is negative. To complete the proof it remains only to consider the case $\lambda=0$. There is no other eigenvector than  $(m=2\pi R p_\ast^\prime (\pi R^2)+\gamma/R^2, \rho=1)$ in this case, as follows from \eqref{DlinnayaOtsenkaNoPonyatnaya}, and there is no other generalized eigenvectors,
 otherwise 
$0=\int_{B_R}(2\pi R p_\ast^\prime (\pi R^2)+\gamma/R^2)dxdy+m_0\int_{\partial B_R} \partial_\nu \phi ds= \pi R(2\pi R^2p_\ast^\prime (\pi R^2)+\gamma/R+2m_0)$.
Theorem \ref{thm_linearizeddisk} is proved. 
\end{proof}
In the proof of  Theorem \ref{thm_linearizeddisk} we have used the following simple 
\begin{prop}
	The eigenfunctions corresponding to the second and  the third (if counted with multiplicity) eigenvalues of $-\Delta$ in $B_R$  with the homogeneous Neumann condition on $\partial B_R$  have the form
	\begin{equation}\label{eq:second_eigenvalue_of_neumann_laplacian}
	v_2(r,\varphi)=\hat v_2(r)\cos(\varphi+\varphi_0)
	\end{equation}
\end{prop}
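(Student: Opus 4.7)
The plan is to diagonalize $-\Delta$ on $B_R$ with homogeneous Neumann data by separation of variables in polar coordinates, identifying each eigenvalue with a zero of the derivative of a Bessel function, and then to compare the smallest such zeros across Fourier modes.

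First I would write any eigenfunction $v$ associated with eigenvalue $\lambda\ge 0$ in the form of a Fourier series in $\varphi$, $v(r,\varphi)=\sum_{n\ge 0}\bigl(a_n(r)\cos n\varphi+b_n(r)\sin n\varphi\bigr)$. Substituting into $-\Delta v=\lambda v$, the coefficient of each harmonic satisfies the Bessel-type ODE
\begin{equation*}
\frac{1}{r}(r f'(r))'-\frac{n^2}{r^2}f(r)+\lambda f(r)=0,\qquad 0<r<R,
\end{equation*}
together with the regularity condition $f(0)$ finite and the Neumann condition $f'(R)=0$. The solution regular at the origin is (up to a multiplicative constant) $f(r)=J_n(\sqrt{\lambda}\,r)$, and the Neumann condition becomes $J_n'(\sqrt{\lambda}\,R)=0$. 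Hence the full spectrum is $\{\lambda_{n,k}=(j'_{n,k}/R)^2:n\ge 0,\,k\ge 1\}$, where $j'_{n,k}$ denotes the $k$-th nonnegative zero of $J_n'$, and the corresponding eigenspace for each $(n,k)$ with $n\ge 1$ is two-dimensional, spanned by $J_n(j'_{n,k}r/R)\cos n\varphi$ and $J_n(j'_{n,k}r/R)\sin n\varphi$.

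Next I would identify the second and third eigenvalues. The smallest value is $\lambda_{0,1}=0$, corresponding to constant eigenfunctions. To find the next smallest, I would invoke the classical ordering of zeros of derivatives of Bessel functions, $0=j'_{0,1}<j'_{1,1}<j'_{2,1}<j'_{0,2}<\cdots$ (with approximate values $j'_{1,1}\approx 1.8412$, $j'_{2,1}\approx 3.0542$, $j'_{0,2}\approx 3.8317$). Consequently, the smallest positive eigenvalue is $\lambda_{1,1}=(j'_{1,1}/R)^2$, it occurs only in the $n=1$ Fourier mode, and it has multiplicity exactly two. Therefore the second and third eigenvalues of $-\Delta$ in $B_R$ coincide and the corresponding two-dimensional eigenspace is spanned by $J_1(j'_{1,1}r/R)\cos\varphi$ and $J_1(j'_{1,1}r/R)\sin\varphi$. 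Writing $\hat v_2(r):=J_1(j'_{1,1}r/R)$, any eigenfunction of this eigenvalue has the form $\hat v_2(r)(A\cos\varphi+B\sin\varphi)=C\,\hat v_2(r)\cos(\varphi+\varphi_0)$ for suitable constants, which is exactly \eqref{eq:second_eigenvalue_of_neumann_laplacian}.

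The only nontrivial ingredient is the interlacing property $j'_{1,1}<j'_{2,1}<j'_{0,2}$. This is standard and can be justified either by quoting the classical interlacing results for zeros of $J_n'$ or, more self-containedly, by a Sturm-comparison argument applied to the radial ODEs above: for fixed $\lambda$ the substitution $f(r)=r^{-1/2}g(r)$ converts the Bessel equation into $g''+(\lambda-(n^2-1/4)/r^2)g=0$, whose zeros of solutions move to the right as $n$ increases. This is the only step where I would need to be careful; everything else is a direct consequence of separation of variables.
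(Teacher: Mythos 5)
Your proof is correct but takes a genuinely different route from the paper's. You separate variables explicitly into Bessel modes and invoke the ordering $j'_{1,1}<j'_{2,1}$ and $j'_{1,1}<j'_{0,2}$ of zeros of $J_n'$ to identify the second and third eigenvalues with the $n=1$ Fourier mode. The paper instead avoids explicit special functions: it argues by contradiction that if the second Neumann eigenfunction were radially symmetric, say $v_2=\hat v_2(r)$, then differentiating the eigenvalue equation shows $\hat v_2'(r)\cos\varphi$ is a Dirichlet eigenfunction of $-\Delta$ in $B_R$ at the \emph{same} eigenvalue $\lambda_2$; since Dirichlet eigenvalues dominate Neumann ones in order, $\lambda_2$ would then have to be the first Dirichlet eigenvalue, but the first Dirichlet eigenfunction is sign-definite while $\hat v_2'(r)\cos\varphi$ clearly changes sign. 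So the Bessel interlacing $j'_{1,1}<j'_{0,2}$ is replaced by an elementary Dirichlet--Neumann comparison plus a sign argument, which keeps the proof self-contained within general spectral theory. Both arguments take for granted the monotonicity of the lowest eigenvalue in the Fourier index $n$ (equivalently $j'_{1,1}<j'_{n,1}$ for $n\ge 2$), which in your proof follows from the Sturm comparison you sketch. One small caution: that Sturm sketch, which compares the radial potentials for different $n$ at fixed $k$, establishes only $j'_{n,1}<j'_{n+1,1}$; it does not by itself give the cross-index inequality $j'_{1,1}<j'_{0,2}$, for which you do need to quote the full interlacing theorem (or replicate the paper's Dirichlet--Neumann trick).
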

\begin{proof} It suffices to show  that $v_2(r,\varphi)$ is not a radially symmetric Fourier mode. Let $\lambda_2$ denote the corresponding eigenvalue. Assume by contradiction that $v_2(r,\varphi)=\hat v_2(r)$, then by straightforward  differentiation of the eigenvalue equation one checks that $\hat v_2^\prime(r)\cos\varphi$ is an eigenfunction of $-\Delta$ in $B_R$  with the homogeneous Dirichlet condition on $\partial B_R$ corresponding to the eigenvalue $\lambda_2$. Since each eigenvalue 
of  $-\Delta$ in $B_R$ with the homogeneous Dirichlet condition on $\partial B_R$ is strictly larger than that  of $-\Delta$ in $B_R$  with the homogeneous Neumann condition on $\partial B_R$, $\lambda_2$ must be the first eigenvalue of the former operator.  However the first eigenfunction is sign preserving, a contradiction.  
\end{proof}


	\section{Bifurcation of traveling waves from the family  of stationary solutions}
	\label{section_bifurcation}
	
In this Section we show that at the critical radius $R=R_0$ such that  $E(R_0)=0$ 
radially symmetric stationary solutions  \eqref{steady_state} bifurcate to a family of traveling wave solutions.
It is interesting to observe that in a neighborhood of $R_0$ the geometric multiplicity of the zero eigenvalue 
of the linearized operator around stationary solutions is two as well as at $R=R_0$, and the bifurcation takes place via the additional generalized eigenvector 
appearing at $R_0$.
	
	Consider the  ansatz  of  a traveling wave solution moving with velocity $V>0$ in $x$-direction	
\begin{equation}
	\label{tw_ansatz}
	m=m(x-Vt,y),\ \phi=\phi(x-Vt,y),\ \Omega(t)=\Omega+(Vt,0)
\end{equation}
and substitute it to \eqref{actflow_in_terms_of_phi}--\eqref{myosin_bc_I}  to derive stationary free boundary problem for the unknowns $\phi$ and  $\Omega$
	%
	\begin{equation}
	\label{tw_Liouvtypeeq}
	\Delta \phi +\frac{M }{\int_\Omega e^{\phi-Vx}dxdy}e^{\phi-Vx} =\zeta \phi 
	\quad \text{in}\ \Omega, \quad \partial_\nu(\phi-Vx)=0\quad \text{on}\ \partial\Omega,
	\end{equation}
	\begin{equation}
	\label{addit_cond_tw}
	\quad \zeta\phi=p_{\ast}(|\Omega|)-\gamma \kappa\quad \text{on}\ \partial\Omega. 
	\end{equation}
	Indeed, \eqref{myosin_equations_I} yields $-V\partial_x m= \Delta m -\div (m \nabla \phi )$ in $\Omega$  while 
	$\partial_\nu \phi=V \nu_x$ on $\partial \Omega$, then, taking into account the boundary condition $\partial_\nu m=0$, we see
	that 
	\begin{equation}
	\label{myosin_density}
	m=\Lambda e^{\phi-Vx},\quad \text{where}\  \Lambda:= \frac {M}{\int_\Omega e^{\phi-Vx}dxdy}.
	\end{equation}
Here unknown positive constant $M$  represents the total mass of myosin, $M=\int_\Omega m\, dx dy$. 

 For radial stationary solutions 
we have the following 
correspondence between the parameters $M$ and the radius $R$ of the domain
\begin{equation}
\label{parameters}
M(R)=\pi R^2    p_{\ast}(\pi R^2)-\pi\gamma R.
\end{equation}
It is convenient  to keep parameter $R$ in the bifurcation analysis presented below, although the domain $\Omega$ is no longer a disk. Then dependence on
$R$ will appear implicitly in the parametrization of the boundary $\partial\Omega$ (as the radius of the reference disk) and explicitly in $M(R)$ given by \eqref{parameters}. We use also the notation $\tilde\Lambda$ for the density of radial stationary solutions,
$$
\tilde\Lambda(R):=M(R)/(\pi R^2)= p_{\ast}(\pi R^2)-\gamma /R,
$$
reserving $m_0$ for the density at $R=R_0$.

We rely on Theorem 1.7 from \cite{CraRab1971} to get the following result on bifurcation of traveling wave solutions. 

	\begin{thm}
		\label{biftwtheorem}
		Let   $R_0$ be  the critical radius such that the solution of \eqref{BifCond1} satisfies 
		\eqref{BifCond1bis} with $R=R_0$ and 
		$m_0=\tilde\Lambda(R_0)= p_{\ast}(\pi R_0^2)-\gamma/ R_0$. Assume also that $m_0<\zeta$, $m_0$ is bounded by the fourth eigenvalue of the $-\Delta$ in $B_R$ with the homogeneous Neumann boundary condition,  
		$p_{\ast}^\prime(\pi R^2_0)\not= -(2m_0-\gamma/R_0)/(2\pi R^2_0)$ and 
		\begin{equation}
		\label{transversality_expl}
		F^\prime(R_0)\not =0,
		\end{equation}
where
\begin{equation}
		\label{FunktsiyaF}
		F(R):=\frac{\zeta I_1\left(R\sqrt{\zeta-\tilde\Lambda(R)}\right)}{(\zeta-\tilde\Lambda(R))^{3/2}
I_1^{\prime}\left(R\sqrt{\zeta-\tilde\Lambda(R)}\right)}
		-\frac{R\tilde\Lambda(R)}{\zeta-\tilde\Lambda(R)},
		\end{equation}
 $I_1$ is the 1st modified Bessel function of the first kind. Then stationary solutions \eqref{steady_state} at $R=R_0$ bifurcate to a family of  traveling wave solutions, i.e. solutions of  \eqref{tw_Liouvtypeeq}--\eqref{addit_cond_tw}  
		parametrized by the velocity $V$. Moreover  for small $V$, $|V|\leq \overline V$ (for some $\overline V>0$), these solutions (both the function $\phi$ and the domain $\Omega$)  are smooth and  depend smoothly on the parameter $V$.  When $V=0$ the solution of  \eqref{tw_Liouvtypeeq}--\eqref{addit_cond_tw} is stationary and  radial,
		$\Omega=B_{R_0}$, $m=\zeta \phi=m_0= p_{\ast}(\pi R_0^2)-\gamma/ R_0$.
	\end{thm}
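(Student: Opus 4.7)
The plan is to apply the Crandall--Rabinowitz bifurcation theorem from \cite{CraRab1971} to the free boundary problem \eqref{tw_Liouvtypeeq}--\eqref{addit_cond_tw}, treating $V$ as the bifurcation parameter. First I would reformulate the problem on the reference disk $B_{R_0}$ via a Hanzawa-type change of variables $\Omega=\Omega_\rho=\{r<R_0+\rho(\varphi)\}$, pulling back the unknown $\phi$ to $B_{R_0}$; because condition \eqref{addit_cond_tw} is of top order and involves the curvature $\kappa$, a natural functional framework is $\rho\in H^{s+2}_{\mathrm{per}}(\partial B_{R_0})$ with target $H^{s}_{\mathrm{per}}(\partial B_{R_0})$ for $s$ large enough to ensure smoothness of the exponential nonlinearity $m=\Lambda e^{\phi-Vx}$ and of the integral defining $\Lambda$. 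To remove the gauge freedom coming from translation invariance in $x$, I would impose $\int_{-\pi}^{\pi}\rho(\varphi)\cos\varphi\,d\varphi=0$, which pins the horizontal center of $\Omega$ at the origin. To ensure a trivial branch is well defined for all $V$, I would carry along an auxiliary parameter (the reference radius $R$, equivalently the total mass $M$) and let $R=R(V)$ adjust so that the pair $\rho\equiv 0$ together with the corresponding radial solution satisfies the equations; at $V=0$ this is precisely the stationary radial solution \eqref{steady_state} at $R=R_0$.

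With the resulting operator equation $\mathcal{F}(\rho,V)=0$, the next step is to identify the kernel of $D_\rho\mathcal{F}(0,0)$. Up to the identifications produced by the change of variables, this linearization coincides with the linear problem of Section \ref{section_lin_an_stst} restricted to the Fourier mode $\cos\varphi$. By Lemma \ref{och_polezn_lemma}, the hypothesis $E(R_0)=0$ (which is exactly the hypothesis that the solution of \eqref{BifCond1} satisfies \eqref{BifCond1bis}) produces, in addition to the infinitesimal shift mode, the generalized eigenvector $m=m_0(\Phi_D(r)-r)\cos\varphi$; after modding out the shift by the gauge constraint above, the kernel becomes one-dimensional with $\rho_0(\varphi)=\cos\varphi$. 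Fredholmness of index zero of $D_\rho\mathcal{F}(0,0)$ follows by separation of variables: each Fourier mode $\cos n\varphi$ yields a scalar ODE problem on $[0,R_0]$ of modified Bessel type, and the arguments from the proof of Theorem \ref{thm_linearizeddisk} show that all modes with $n\geq 2$ (and the $n=0$ mode, after taking into account \eqref{areashrinkingprevention}) are invertible, so the only nontrivial contribution to the kernel comes from $n=1$.

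To verify the transversality condition $D_{\rho V}\mathcal{F}(0,0)[\rho_0]\notin\mathrm{Range}\,D_\rho\mathcal{F}(0,0)$, I would differentiate \eqref{tw_Liouvtypeeq}--\eqref{addit_cond_tw} in $V$ at $V=0$. The resulting inhomogeneous problem carries an explicit source originating from the $e^{-Vx}$ factor in $m$, which projects squarely onto the $n=1$ angular mode. Solvability of the $n=1$ radial ODE is governed by the modified Bessel function $I_1$, and evaluating the projection of this source onto the cokernel of $D_\rho\mathcal{F}(0,0)$ would produce a scalar multiple of the quantity $F(R)$ from \eqref{FunktsiyaF} evaluated at $R=R_0$; more precisely, the bifurcation parameter $R$ enters through the coefficients of the $n=1$ Bessel ODE, and the solvability obstruction at a nearby $R$ is $F(R)$ itself, so the derivative of the obstruction in the bifurcation direction is $F'(R_0)$. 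Hence transversality is exactly \eqref{transversality_expl}. Once the three Crandall--Rabinowitz hypotheses are in hand, the theorem delivers a smooth curve $V\mapsto(\rho(V),R(V))$ of solutions bifurcating from $(0,R_0)$ at $V=0$, and undoing the Hanzawa transformation gives the claimed smooth family of traveling waves, with the $V=0$ member being the radial stationary solution by construction.

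The main obstacle I anticipate is not any single computation but the functional-analytic setup itself. Unlike the tumor-growth bifurcations of \cite{FriRei2001,FriHu2008}, here the bifurcation proceeds via a generalized eigenvector of a non self-adjoint operator rather than an honest eigenvector, so the gauge fixing and the choice of auxiliary parameter $R$ must be arranged so that $D_\rho\mathcal{F}(0,0)$ has a one-dimensional kernel and one-codimensional range. Identifying the correct cokernel element -- namely the element read off from the adjoint Jordan chain at $E(R_0)=0$, rather than an $L^2$-orthogonal eigenvector -- is what makes the explicit Bessel expression \eqref{FunktsiyaF} appear naturally in the transversality condition, and getting this identification right is the delicate point of the argument.
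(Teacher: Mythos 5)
Your overall plan—Crandall--Rabinowitz on a polar/Hanzawa reduction, gauge-fixing away the $x$-shift with $\int\rho\cos\varphi\,d\varphi=0$, tracing the transversality obstruction to $F'(R_0)\neq 0$—is the right idea, but the specific functional-analytic setup you describe is broken, and the error is precisely at the point you yourself flag as delicate.

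You take $V$ as the bifurcation parameter and $\rho$ as the unknown, claiming that an auxiliary choice $R=R(V)$ produces a trivial branch $\rho\equiv 0$ for all small $V$. It does not: for $V\neq 0$, $\rho\equiv 0$ (a disk $B_R$) would require the potential $\phi$ to satisfy simultaneously the constant Dirichlet condition \eqref{addit_cond_tw} and the non-constant Neumann condition $\partial_\nu\phi = V\cos\varphi$ on $\partial B_R$, which is an overdetermined problem with no solution for $V\neq 0$, no matter how $R$ is tuned. So the Crandall--Rabinowitz hypothesis $\mathcal{F}(0,V)=0$ for all $V$ near $0$ fails outright in your formulation. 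Relatedly, your identification of the kernel is off: with the gauge constraint in place, the linearization in $\rho$ alone is invertible for every $R$ (the constraint removes $\cos\varphi$, and the generalized eigenvector of $\mathcal{A}_{\rm ss}$ at $R_0$ has $\rho$-component zero, so it does not reappear as $\rho_0=\cos\varphi$). There is simply no bifurcation in $\rho$ alone, which is another symptom of the same misidentification.

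The paper resolves exactly this difficulty by switching the roles: $R$ (equivalently the myosin mass $M$) is the bifurcation parameter, while the unknowns are the \emph{pair} $(\rho,V)$. Then the trivial branch $(\rho,V)=(0,0)$ exists for every $R$, and the one-dimensional kernel of $\partial_{(\rho,V)}\mathcal{F}$ at $(\rho,V)=(0,0)$, $R=R_0$ is $\mathrm{span}\{(\rho=0,V=1)\}$—the pure velocity direction—which is non-trivial precisely when $\tilde\Phi_V^0(R_0,R_0)=0$, i.e., when the generalized eigenvector of Lemma \ref{och_polezn_lemma} exists. Transversality then amounts to $\frac{d}{dR}\tilde\Phi_V^0(R,R)\big|_{R=R_0}\neq 0$, which rewrites as $F'(R_0)\neq 0$ after explicit solution in terms of $I_1$. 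This is the genuinely non-standard part of the argument (bifurcation through a generalized eigenvector rather than an eigenvector), and it is exactly what your setup misses; without moving $V$ into the unknown slot, the Crandall--Rabinowitz machinery does not engage.
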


	\begin{proof} 
		As above we consider $\Omega$ in polar coordinates, $\Omega =\{0 \leq r< R +\rho(\varphi)\}$. Since $\zeta > \tilde\Lambda(R_0)$, for sufficienly small 
$\rho$, $V$ and $R$ sufficiently close to $R_0$ there is a unique solution  
		$\Phi=\Phi(x,y; V,R, \rho)$ of \eqref{tw_Liouvtypeeq}.  It depends on three parameters:
		the  scalar parameter $V$ (the prescribed velocity), the radius $R$ via the parametrization of the domain and 
		$\Lambda=\Lambda(R)$, and the functional parameter
		$\rho$ that describes the shape of the domain $\Omega$ or, more precisely, its deviation from the disk $B_R$. 
                  As above we assume the symmetry of the domain with respect to the $x$-axis and therefore its  shape is described by an even function $\rho$.
		
		The condition \eqref{addit_cond_tw} on the unknown boundary, described by $\rho(\varphi)$,   rewrites as 
		\begin{equation}
		\label{TW_equationontheboundary}
		p_{\ast}(|\Omega|)-\gamma\frac{(R+\rho)^2 + 2(\rho^\prime)^2-\rho^{\prime\prime}(R+\rho)}{((R+\rho)^2+(\rho^\prime)^2)^{3/2}}=\zeta\Phi ((R+\rho (\varphi))\cos\varphi, (R+\rho (\varphi))\sin\varphi, V, R, \rho).
		\end{equation}
 To get rid of infinitesimal shifts we will require that
\begin{equation}
	\label{centeredDomAinTH}
	\int_{-\pi}^{\pi}\rho (\varphi)  \cos\varphi d\varphi =0.
	\end{equation}
Then introducing the function $\mathcal{F}$ which maps from $
X=C^{2}_{\rm per,even}(-\pi,\pi)\times \mathbb{R}\times \mathbb{R}$ to 
	$
Y=C^{0}_{\rm per,even}(-\pi,\pi)\times \mathbb{R}$: 
		\begin{equation}
		\label{operator_equation}
		\mathcal{F}((\rho, V), R):=\left(\gamma\frac{(R+\rho)^2 + 2(\rho^\prime)^2-\rho^{\prime\prime}(R+\rho)}
		{\zeta((R+\rho)^2+(\rho^\prime)^2)^{3/2}}+\Phi-
			\frac{p_{\ast}(|\Omega|)}{\zeta}, \int_{-\pi}^\pi \rho\cos\varphi d\varphi\right),
		\end{equation}
		we rewrite problem \eqref{tw_Liouvtypeeq}--\eqref{addit_cond_tw} in the form
		\begin{equation}
		\label{FuncAnTWequation}
		\mathcal{F}((\rho,V),R)=0. 
		\end{equation}
		Next we apply the Crandall-Rabinowitz bifurcation theorem \cite{CraRab1971} (Theorem 1.7),
		which guarantees bifurcation of new smooth branch of solutions provided that
		%
		\begin{itemize}
			\item[(i)] $\mathcal{F}(0, R)=0$ for all $R$ in a neighborhood of $R_0$;
			\item[(ii)] there exist continuous  $\partial_{(\rho,V)}\mathcal{F}$, $\partial_R\mathcal{F}$, and $\partial^2_{(\rho,V),R}\mathcal{F}$ in a neighborhood of $(\rho,V)=0$, $R=R_0$;  
			\item[(iii)] the null space $\mathrm{Null}(\partial_{(\rho,V)}\mathcal{F})$ at $(\rho,V)=0$, $R=R_0$ has dimension one and 
$\mathrm{Range}(\partial_{(\rho,V)}\mathcal{F})$ at $(\rho,V)=0$, $R=R_0$ has co-dimension one.
			\item[(iv)]  $\partial^2_{(\rho,V),R}\mathcal{F}(\rho,V)\notin  \mathrm{Range}(\partial_{(\rho,V)}\mathcal{F})$ at $(\rho,V)=0$, $R=R_0$ 
			for all 
			$(\rho,V)\in \mathrm{Null}(\partial_{(\rho,V)}\mathcal{F})$.
		\end{itemize}
		
It is easy to see that condition (i) is satisfied, and  condition (ii) can be verified as in \cite{BerFuhRyb2018}.  To verify (iii)  we begin with calculating $\mathcal{L}:=\partial_{(\rho,V)}\mathcal{F}$ at $(\rho,V)=0$.
		Linearizing  \eqref{operator_equation} around $(\rho,V)=0$  we get
		\begin{multline}\label{linearization}
		\mathcal{L}: (\rho, V)\mapsto 
		\left(-\frac{\gamma}{R^2\zeta} (\rho^{\prime\prime}+\rho)+V\partial_V \Phi(R\cos\varphi,R\sin\varphi; 0,R,0) \right.\\
\left.+\langle \partial_\rho \Phi,  \rho \rangle\bigl|_{(\rho,V)=0}\bigr.-
			R\frac{p^\prime_{\ast}(\pi R^2)}{\zeta}\int_{-\pi}^{\pi} \rho d\varphi,
		\int_{-\pi}^{\pi} \rho(\varphi) \cos \varphi d\varphi
		\right).
		\end{multline}
		Here $\langle \partial_\rho \Phi,  \rho \rangle\bigl|_{ (\rho,V)=0}\bigr.$ denotes the Gateaux derivative of  $\Phi$ at $(\rho,V)=0$.  
	We have 
$$
\langle \partial_\rho \Phi,  \rho \rangle\bigl|_{ (\rho,V)=0}\bigr.=-\frac{ m_0  }{\zeta \pi R }\int_{-\pi}^{\pi}\rho\,d\varphi
$$
		and $\partial_V \Phi(R\cos\varphi,R\sin\varphi; 0,R,0)=\tilde\Phi_V^0(R,R)\cos\varphi$, where $\tilde \Phi_V^0(r,R)$ solves
		\begin{equation}
\begin{aligned}
		\label{bicond_po_drugomu}
		&\frac{1}{r}\left(r(\tilde \Phi_V^0)^\prime(r,R)\right)^\prime-\frac{1}{r^2} \tilde \Phi_V^0(r,R) +(\tilde\Lambda(R)-\zeta)\tilde \Phi_V^0(r,R) =\tilde\Lambda(R) r\quad 0\leq r<R,\\
		& \tilde \Phi_V^0(0,R)=0, \,(\tilde \Phi_V^0)^\prime (r,R)\bigl|_{r=R}\bigr.=1.
\end{aligned}
		\end{equation}
The operator $\mathcal{L}$ has a bounded inverse when $\tilde \Phi_V^0(R,R)\not =0$ as can be verified by the Fourier analysis, 
while in the case $\tilde \Phi_V^0(R,R)=0$ (for $R=R_0$, when the operator $\mathcal{A}_{ss}$ has a generalized eigenvector corresponding to the zero eigenvalue, 
see Lemma \ref{och_polezn_lemma}) the null space of the operator $\mathcal{L}$  is one-dimensional (it is ${\rm span}\{(\rho=0,V=1)\}$) and its range consists of all the pairs $(f,C)$ such that 
		$\int_{-\pi}^{\pi}f(\varphi)\cos\varphi d\varphi=0$. Thus, condition (iii) is satisfied.

		It remains to verify  the transversality condition (iv). 
We check if  
		$\partial_R \mathcal{L} (0,1)\bigl|_{R=R_0}\bigr.$ does not belong to the range of the opeartor $\mathcal{L}$, where
		\begin{equation}\label{eq:derivative_of_L}
		\partial_R \mathcal{L}\bigl|_{R=R_0}: (\rho, V)\bigr.\mapsto 
		\left(\frac{2\gamma}{R^3_0\zeta} (\rho^{\prime\prime}+\rho)+V\frac{d}{d R}\tilde \Phi_V^0\Bigl.\Bigr|_{R=R_0}\cos\varphi
		+C_\ast (R_0)\int_{-\pi}^{\pi}\rho d\varphi
		,0\right)
		\end{equation}
		with $C_\ast (R_0)=-\frac{1}{\zeta}\left(p_{\ast}^\prime(\pi R_0^2)+2\pi R_0^2 p_{\ast}^{\prime\prime}(\pi R_0^2)+3M/(\pi R^2_0)^2\right)$. 
		Since the range of $\mathcal L$ (described above) is all $(f,C)$ such that $f$ is orthogonal to $\cos\varphi$, we must have a nonzero coefficient in front  of $\cos\varphi$ in \eqref{eq:derivative_of_L} to satisfy  condition (iv). Thus this (transversality) condition can be equivalently restated as
\begin{equation}
		\label{transversalitycondition}
		\Bigl.\frac{d}{d R}
		\tilde \Phi_V^0(R,R)\Bigr|_{R=R_0}\not =0.
\end{equation} 
	In order to check \eqref{transversalitycondition}  we change variable in \eqref{bicond_po_drugomu} by introducing $\psi(r,R):=\tilde \Phi_V^0(Rr,R)$, 
		this leads to the problem in the unit disk:
		\begin{equation*}
\begin{aligned}
		&\frac{1}{r}(r\psi^\prime(r,R))^\prime-\frac{1}{r^2} \psi(r,R) +R^2(\tilde\Lambda(R)-\zeta)\psi(r,R) =R^3\tilde\Lambda(R) r\quad 0\leq r<1,\\
		&\psi(0,R)=0, \,\psi^\prime(r,R)\bigl|_{r=1}\bigr.=R.
\end{aligned}
		\end{equation*}
		The solution of this problem is given by 
		$$
		\psi(r,R)=-\frac{R\tilde\Lambda(R)}{\zeta-\tilde\Lambda(R)}r+\frac{\zeta I_1\left(R\sqrt{\zeta-\tilde\Lambda(R)}r\right)}
		{(\zeta-\tilde\Lambda(R))^{3/2}I_1^{\prime}\left(R\sqrt{\zeta-\tilde\Lambda(R)}\right)},
		$$
		so that condition \eqref{transversalitycondition} writes as \eqref{transversality_expl}.
	\end{proof}
	
 \begin{rem}
		The condition \eqref{BifCond1bis} that selects the critical radius $R$ in \eqref{BifCond1}
		(which is also the necessary bifurcation condition, c.f. Theorem \ref{thm_linearizeddisk}, item (ii)) 
		and the transversality condition \eqref{transversality_expl}  write in terms of the function $F$ given by \eqref{FunktsiyaF} as follows 
		\begin{equation}
		\label{VseVterminahF}
		F(R_0)=0, \  F^\prime(R_0)\not =0.
		\end{equation}
	\end{rem}
\begin{rem} \label{eigenvalue_stationary}
Lemma \ref{och_polezn_lemma}  and comparison of the problems \eqref{bicond_po_drugomu} and \eqref{BifCond1}--\eqref{BifCond1bis}   show that  the necessary bifurcation condition $F(R_0)=0$ on the critical radius $R_0$ is satisfied iff $E(R_0)=0$.  Moreover, we show below (see  \eqref{formula_for_E}) that the second  condition  in \eqref{VseVterminahF} (the transversality condition) is satisfied iff  $E^{\prime}(R_0)\not=0$.	
\end{rem} 

\begin{rem}\label{rem:boundary}
		If $\Phi=\Phi(x,y,V)$ and $\Omega(V)=\{0<r<R_0+\rho_{\rm tw}(\varphi, V)\}$  are the solutions of \eqref{tw_Liouvtypeeq}--\eqref{addit_cond_tw}, 
then  for small velocities $V$ the first term in the  asymptotic expansion  of the function $\rho_{\rm tw}$   is of order $V^2$, i.e.  $\|\rho_{\rm tw}\|_{C^2}\leq CV^2$,
also $\Lambda(V)=\Lambda(0) + O(V^2)$, $\Lambda^\prime(0)=0$. These properties  follow from Theorem 1.18  in \cite{CraRab1971}.
Combining this with elliptic estimates one can improve bounds for $\rho_{\rm tw}$ to
\begin{equation}
\label{boundsforrho}
 \|\rho_{\rm tw}\|_{C^j}\leq CV^2 \quad \forall j\in\mathbb{Z}_+
\end{equation}
with $C$ depending only on $j$, and derive the following expansion for $\Phi$,
\begin{equation}
\label{expansion_of_Phi}
\Phi(x,y,V)=m_0/\zeta+V\Phi_V^0(x,y)+V^2\tilde\Phi(x,y,V), 
\end{equation}
where 

\begin{equation}\Phi_V^0=\tilde \Phi_V^0(r,R_0)\cos\varphi
\end{equation}\label{myosin_derivative_zero}	
	 is the unique solution of (cf. \eqref{bicond_po_drugomu})
\begin{equation}
\label{def_of_Phi_V}
\Delta \Phi_V^0+m_0(\Phi_V^0-x)=\zeta\Phi_V^0 
\end{equation}
in $B_R$, which satisfies
\begin{equation}
\label{def_of_Phi_Vadditional}
 \Phi_V^0=0\quad\text{on}\ \partial B_R,\quad \text{and additionally} \quad \partial_r \Phi_V^0=\cos\varphi\quad\text{on}\ \partial B_R,
\end{equation}
and functions  $\tilde \Phi $ are uniformly (in $V$)  bounded in $C^{j}(\overline\Omega(V))$ $\forall j\in\mathbb{Z}_+$. Note that $\Phi_V^0$ 
extends as the solution of \eqref{def_of_Phi_V} to the entire space $\mathbb{R}^2$,  being a product 
of  radially symmetric function and $\cos\varphi$.
\end{rem}

%
%

	The following lemma relates $F'(R)$, which appears in the bifurcation condition \eqref{VseVterminahF}, to $\Phi_V^0$ 
(the unique solution of \eqref{def_of_Phi_V}--\eqref{def_of_Phi_Vadditional}) and the derivative of the eigenvalue $E(R)$.
	
	\begin{lem} 
\label{poleznyeformul'ki}
If $F(R)=0$, then the function $F^\prime(R)$ can be written in terms of  $\Phi_V^0$  as
		\begin{equation}\label{eq:transversality_condition_relation}
		\pi \zeta R F^\prime(R)=
		\pi R (\zeta+\tilde\Lambda-(\tilde\Lambda R)^2)-(\gamma /R^2+2\pi R p_\ast^\prime(\pi R^2))\int_{B_R}|\nabla (\Phi_V^0-x)|^2\,dx\,dy.
		\end{equation}
		Also 
		\begin{equation}
		E^\prime(R)=-\frac{\pi \zeta R F^\prime(R)}{\tilde\Lambda \int_{B_R}(\Phi_V^0-x)^2dxdy}.
		\label{formula_for_E}
		\end{equation}
	\end{lem}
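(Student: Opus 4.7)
The strategy is to prove \eqref{eq:transversality_condition_relation} by a direct PDE computation based on the relation $F(R)=\tilde\Phi_V^0(R,R)$, and then to derive \eqref{formula_for_E} from \eqref{eq:transversality_condition_relation} via an envelope-theorem argument applied to the eigenvalue $E(R)$. Throughout let $u:=\Phi_V^0-x$. The starting point is that in the proof of Theorem \ref{biftwtheorem} the explicit computation of $\psi(1,R)$ shows $F(R)=\tilde\Phi_V^0(R,R)$; totally differentiating in $R$ and using the Neumann condition $(\tilde\Phi_V^0)'(R,R)=1$ from \eqref{bicond_po_drugomu} gives $F'(R)=1+\chi(R)$ with $\chi(r):=\partial_R\tilde\Phi_V^0(r,R)$. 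Differentiating the ODE \eqref{bicond_po_drugomu} in $R$ and recasting in the plane via $U(r,\varphi):=\chi(r)\cos\varphi$ produces the 2D problem
\[
\Delta U-(\zeta-m_0)U=-\tilde\Lambda'(R)\,u\ \text{in }B_R,\quad U|_{\partial B_R}=(F'(R)-1)\cos\varphi,\quad \partial_r U|_{\partial B_R}=(1/R-\tilde\Lambda R)\cos\varphi,
\]
the last boundary condition obtained by differentiating $(\tilde\Phi_V^0)'(R,R)=1$ totally in $R$ and using the radial ODE at $r=R$; note also that $\tilde\Lambda'(R)=\gamma/R^2+2\pi R p_\ast'(\pi R^2)$ is exactly the coefficient in \eqref{eq:transversality_condition_relation}.

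To extract $F'(R)$ I apply two complementary integration-by-parts identities on $B_R$. Multiplying the equation for $U$ by $x$ and integrating (with $\Delta x=0$, $\partial_\nu x=\cos\varphi$, and the boundary data for $U$) expresses $\pi R F'(R)$ linearly in $\int_{B_R}xU$ and $\int_{B_R}xu$. Green's identity $\int_{B_R}(U\Delta u-u\Delta U)\,dx\,dy=\int_{\partial B_R}(U\partial_\nu u-u\partial_\nu U)\,d\sigma$, combined with $\Delta u-(\zeta-m_0)u=\zeta x$ and the boundary conditions $\partial_\nu u=0$, $u|_{\partial B_R}=-R\cos\varphi$, produces after cancellation of the $(\zeta-m_0)$ terms the identity $\zeta\int xU+\tilde\Lambda'(R)\int u^2=\pi R(1-\tilde\Lambda R^2)$. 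Using this to eliminate $\int xU$ and then absorbing $\int xu$ through $\int|\nabla u|^2=-(\zeta-m_0)\int u^2-\zeta\int xu$ (obtained by multiplying $\Delta u-(\zeta-m_0)u=\zeta x$ by $u$) yields \eqref{eq:transversality_condition_relation} after routine algebra.

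Formula \eqref{formula_for_E} then follows from \eqref{eq:transversality_condition_relation} by an envelope-theorem argument. At $R=R_0$ with $F(R_0)=0$, the variational problem \eqref{formaquadratic} attains $E(R_0)=0$ at the trial $\tilde m:=m_0 u$ (cf.\ the discussion after Lemma \ref{och_polezn_lemma}). For $R$ near $R_0$ the inequality $E(R)\ge -E_{\zeta,R}(\tilde m|_{B_R})/\|\tilde m\|^2_{B_R}$ holds with equality at $R_0$, hence the difference has a critical point there and
\[
E'(R_0)=-\frac{E_\zeta'(R_0)}{m_0^2\int_{B_{R_0}}u^2\,dx\,dy},
\]
where $E_\zeta(R):=E_{\zeta,R}(\tilde m|_{B_R})$ keeps $\tilde\Lambda(R)$ in the coefficient. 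I compute $E_\zeta'(R_0)$ via Reynolds' transport theorem for the moving-boundary integrals and the shape-derivative identity $\partial_R\phi_R=-\partial_r\phi$ on $\partial B_R$ (from $\phi_R|_{\partial B_R}=0$), together with the elliptic energy identity $\int|\nabla\phi|^2+\zeta\int\phi^2=\int\tilde m\phi$; the boundary contributions use $|\nabla u|^2|_{\partial B_R}=\sin^2\varphi$ and $\partial_r\Phi_V^0|_{\partial B_R}=\cos\varphi$, while the interior contribution reduces via $(\zeta-m_0)\int u^2+\zeta\int xu=-\int|\nabla u|^2$ to exactly $m_0$ times the right-hand side of \eqref{eq:transversality_condition_relation}, producing \eqref{formula_for_E}.

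The principal obstacle is careful bookkeeping: tracking correct signs in every Green's identity (especially the mixed Dirichlet--Neumann contributions on $\partial B_R$ for the pair $(U,u)$) and applying the shape-derivative identity $\partial_R\phi_R=-\partial_r\phi$ with the correct orientation when differentiating $\int_{B_R}\tilde m\phi_R$. Once the identities are assembled the algebraic simplification producing \eqref{eq:transversality_condition_relation} and \eqref{formula_for_E} is mechanical.
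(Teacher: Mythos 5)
Your proof is correct and the first formula \eqref{eq:transversality_condition_relation} is obtained by essentially the same route as the paper: both differentiate the defining ODE \eqref{bicond_po_drugomu} in $R$ and manipulate via integration by parts. The paper multiplies the radial equation for $\partial_R\tilde\Phi_V^0$ by the single test function $r\tilde\phi=-r(\zeta-\tilde\Lambda)\tilde\Phi_V^0-\tilde\Lambda r^2$ and simplifies; you split that manipulation into three Green-type identities in $B_R$ (pairing with $x$, the Green identity between $U$ and $u$, and the energy identity for $u$). These are equivalent modulo reorganization — I checked that your three identities combine to give precisely $\pi\zeta R F'(R)=\pi R(\zeta+\tilde\Lambda-(\tilde\Lambda R)^2)-\tilde\Lambda'\int|\nabla u|^2$, matching \eqref{eq:transversality_condition_relation}.

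For \eqref{formula_for_E} your route is genuinely different. The paper differentiates the eigenvalue equation \eqref{ekvivaleigenvalprob} in $\tilde R$ and pairs with the adjoint eigenvector $(\tilde m=\Phi_V^0-x,\tilde\rho=0)$ — a standard first-order perturbation-theory argument that avoids computing $\partial_R m^*$. You instead exploit the Rayleigh-quotient characterization \eqref{formaquadratic} and apply the envelope (Danskin) theorem: since $E(R)\geq -E_{\zeta,R}(\tilde m|_{B_R})/\|\tilde m\|^2_{B_R}$ with equality at $R_0$ and $E_\zeta(\tilde m)=0$ there, the difference has an interior critical point at $R_0$, so $E'(R_0)$ equals the shape derivative of the Rayleigh quotient at the fixed trial $\tilde m=m_0 u$. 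The normalization $E_{\zeta,R_0}(\tilde m)=0$ kills the second term of the quotient derivative, giving $E'(R_0)=-\partial_R E_{\zeta,R}(\tilde m)\big|_{R_0}/(m_0^2\int u^2)$; the claim $\partial_R E_{\zeta,R}(\tilde m)\big|_{R_0}=m_0\cdot(\text{RHS of }\eqref{eq:transversality_condition_relation})$ then recovers \eqref{formula_for_E} since $m_0=\tilde\Lambda$. Both approaches are valid and of comparable length. The adjoint-pairing argument of the paper has the advantage of working without appealing to a variational characterization (relevant since $\mathcal{A}_{ss}$ is non-self-adjoint in general); the envelope argument is natural given \eqref{formaquadratic}, but it does rely on the variational structure being available on the $\cos\varphi$ Fourier mode and requires the additional observations that (i) $E(R_0)=0$ is a simple eigenvalue (Lemma \ref{och_polezn_lemma}) so the optimizer varies smoothly, and (ii) $\tilde m$ extends beyond $B_{R_0}$ (Remark \ref{rem:boundary}) so that $\tilde m|_{B_R}$ is a legitimate trial function for $R>R_0$. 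The remaining shape-derivative bookkeeping you defer is nontrivial (Reynolds transport plus the identity $\partial_R\phi_R=-\partial_r\phi_R$ on $\partial B_R$), but I see no obstruction to carrying it out; the paper's proof is arguably cleaner here precisely because the adjoint pairing collapses most boundary contributions automatically.
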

	\begin{proof} We have $\Phi_V^0=\tilde\Phi_V^0(r, R)\cos\varphi$,  and $\partial_R\tilde\Phi_V^0(r, R)$ satisfies the equation 
		\begin{equation}
		\label{ur_e_forsms}
		\frac{1}{r}(r\partial_R (\tilde\Phi_V^0)^\prime)^\prime-\frac{1}{r^2} \partial_R \tilde\Phi_V^0+(\tilde\Lambda(R)-\zeta)\partial_R\tilde\Phi_V^0(r, R) =-\tilde \Lambda^\prime(R)(\tilde\Phi_V^0- r) \quad 0\leq r<R,
		\end{equation}
		with boundary conditions
		\begin{equation}
		\partial_R\tilde\Phi_V^0(0,R)=0, \quad (\partial_R\tilde \Phi_V^0)^\prime(R, R)=-(\tilde\Phi_V^0)^{\prime\prime}(R,R)=1/R-\tilde \Lambda R.
		\end{equation}
		Multiply \eqref{ur_e_forsms} by $r\tilde\phi:=-r(\zeta-\tilde\Lambda)\tilde\Phi_V^0-\tilde\Lambda r^2$ 
and integrate the result to get, using integration by parts,
		$$
		R\left(\tilde\Lambda R(\tilde\Lambda R- 1/R) +\zeta \partial_R\tilde\Phi_V^0(R,R)\right)=-\tilde \Lambda\tilde\Lambda^\prime \int_0^R
		\left(\tilde\Phi_V^0-r\right)^2rdr
		+\zeta \Lambda^\prime \int_0^R \left(\tilde\Phi_V^0-r\right)\tilde\Phi_V^0 r dr,
		$$
		or, since $\tilde\Phi_V^0(r, R)-r$ satisfies 
		$\frac{1}{r}(r((\tilde\Phi_V^0)^\prime-1))^\prime-\frac{1}{r^2} (\tilde\Phi_V^0-r) =-\tilde\Lambda(\tilde\Phi_V^0-r)+\zeta\tilde\Phi_V^0$ in $(0,R)$,
		\begin{multline*}
		R\zeta  (\partial_R \tilde\Phi_V^0(R, R)+1)=R\zeta-\tilde\Lambda R(\tilde\Lambda R^2-1)-\tilde\Lambda^\prime\int_0^R
\left( \left((\tilde\Phi_V^0)^\prime-1\right)^2+
		\left(\tilde\Phi_V^0(r, R)-r\right)^2/r^2\right)rdr
\\
=R (\zeta+\tilde\Lambda-(\tilde\Lambda R)^2)-\frac{\gamma /R^2+2\pi R p_\ast^\prime(\pi R^2)}{\pi}\int_{B_R}|\nabla (\Phi_V^0-x)|^2\,dx\,dy.
		\end{multline*}
Owing to the fact that $(\tilde\Phi_V^0)^\prime(R,R)=1$ the left hand side of the above relation equals $\zeta R F^\prime(R)$. Thus 
\eqref{eq:transversality_condition_relation} is proved.

		Now we calculate the derivative of the eigenvalue $E=E(\tilde R)$ at $\tilde R=R$. Recall that $E(R)=0$ and 
(for $\tilde R$ in a small neighborhood of $R$) $E(\tilde R)$ is
a simple eigenvalue of the equation 
\begin{equation}
\label{ekvivaleigenvalprob}
E m=\Delta m+\tilde\Lambda (\tilde R)m-\zeta\tilde\Lambda(\tilde R)\phi(m) \quad \text{in}\ B_{\tilde R} 
\end{equation} 
with the boundary condition $\partial_r m=0$ on $\partial B_{\tilde R}$, where $\phi(m)$ is the unique solution of the equation 
$\Delta \phi+m=\zeta\phi$ in  $B_{\tilde R}$ 
subject to the boundary condition $\phi=0$  on  $\partial B_{\tilde R}$. Since the problem smoothly depends on the parameter $\tilde R$, 
the eigenvalue $E(\tilde R)$ is a smooth function  of the parameter and one can choose a smooth family of eigenfunctions $m(x,y,\tilde R)=\hat m(r,\tilde R)\cos\varphi$ such that 
$m(x,y,R)=m_0(\Phi_V^0-x)$. Therefore we can differentiate \eqref{ekvivaleigenvalprob} in $\tilde R$  to find that $\partial_{\tilde R} m$ at $\tilde R=R$ satisfies 
\begin{equation}
\label{ekvivaleigenvalprob1}
E^\prime m=\tilde\Lambda^\prime m-\zeta\tilde\Lambda^\prime\phi(m)+\Delta\partial_{\tilde R} m+\tilde\Lambda\partial_{\tilde R} m
-\zeta\tilde\Lambda\partial_{\tilde R}\phi(m) \quad \text{in}\ B_R.
\end{equation} 
Also, differentiating the equality $\partial_r\hat m(\tilde R, \tilde R)=0$ in $\tilde R$ at $\tilde R=R$ we find, 
\begin{equation}
\label{analogichno}
\partial_r\partial_{\tilde R}\hat m=-\partial^2_{rr}\hat m=
-\tilde \Lambda \partial^2_{rr}\tilde\Phi_V^0=\frac{\tilde \Lambda}{R}\partial_r\tilde\Phi_V^0-\tilde \Lambda^2 R=\frac{\tilde \Lambda}{R}-\tilde \Lambda^2 R.
\end{equation}
Now multiply \eqref{ekvivaleigenvalprob1}  by $\Phi_V^0-x$ (the pair $\tilde m= \Phi_V^0-x$, $\tilde\rho=0$ is an element of the null space of the adjoint operator) and integrate over $B_{R}$.  
We have
\begin{equation}
\label{VyChIsLeNiYa}
\begin{aligned}
E^\prime\tilde\Lambda \int_{B_R}(\Phi_V^0-x)^2dxdy&=
-\tilde\Lambda^\prime \int_{B_R} \Delta  \Phi_V^0 (\Phi_V^0-x) dx dy-\int_{\partial B_R} \partial_r\partial_{\tilde R} m\, x ds\\
&+\int_{B_R} \partial_{\tilde R} m \left(\Delta  \Phi_V^0+\tilde \Lambda(\Phi_V^0-x)\right) dx dy-\zeta \tilde \Lambda \int_{B_R}\partial_{\tilde R} \phi (\Phi_V^0-x) dx dy.
\end{aligned}
\end{equation}
Since $\Delta  \Phi_V^0+\tilde \Lambda(\Phi_V^0-x)=\zeta\Phi_V^0$ in $B_R$, $\Phi_V^0=0$ on $\partial B_R$ and $\Delta \partial_{\tilde R} \phi+\partial_{\tilde R}m=\zeta\partial_{\tilde R}\phi$ in  $B_{R}$, the last line in \eqref{VyChIsLeNiYa} rewrites as $\zeta \int_{\partial B_R}\partial_{\tilde R}\phi \partial_r \Phi_V^0\,ds$, or equivalently, $
\zeta \int_{\partial B_R}\partial_{\tilde R}\phi \, x/R\,ds$. Thus 
\begin{equation}
\label{NuzadolbaloUzhe}
E^\prime\tilde\Lambda \int_{B_R}(\Phi_V^0-x)^2dxdy=\tilde\Lambda^\prime \int_{B_R} |\nabla(\Phi_V^0-x)|^2 dx dy+
\int_{\partial B_R} (\zeta \partial_{\tilde R}\phi /R -\partial_r\partial_{\tilde R} m)\, x\,ds.
\end{equation}
Similarly to \eqref{analogichno} one can calculate $\partial_{\tilde R}\phi=-\partial_r \Phi_V^0=-x/R$, so that 
$\zeta \partial_{\tilde R}\phi -R \partial_r\partial_{\tilde R} m=(\tilde\Lambda^2 R^2-\zeta-\tilde \Lambda)\cos \varphi$. Substituting this into \eqref{NuzadolbaloUzhe}
and calculating $\tilde\Lambda^\prime (R)= \gamma/R^2 +2\pi R p_{\ast}^\prime(\pi R^2)$ completes the proof of  Lemma \ref{poleznyeformul'ki}.
%
	\end{proof}

	Finally, we demonstrate qualitative agreement of our analytical results with  experimental results from \cite{VerSviBor1999}. First observe that Theorem \ref{biftwtheorem}  establishes existence  of a smooth family of traveling waves in the  model \eqref{actflow_in_terms_of_phi}--\eqref{myosin_bc_I}. Then one can  obtain asymptotic expansions of  traveling waves  (solutions of \eqref{tw_Liouvtypeeq}--\eqref{addit_cond_tw}) in small velocities $V$, similarly to Appendix in \cite{BerFuhRyb2018}.  The plots of these expansions up to $V^3$ show that  as the velocity increases, the cell shape  becomes asymmetric, with flattening of its front and the  myosin  accumulates  at the rear. This myosin accumulation is consistent with the 1D results in \cite{RecPutTru2013} and \cite{RecPutTru2015}, and the 2D results in \cite{ZieAra2015} (see Fig.3 in \cite{ZieAra2015}).  
	\begin{figure}[h]
		\begin{center}
			\includegraphics[width=0.85\textwidth]{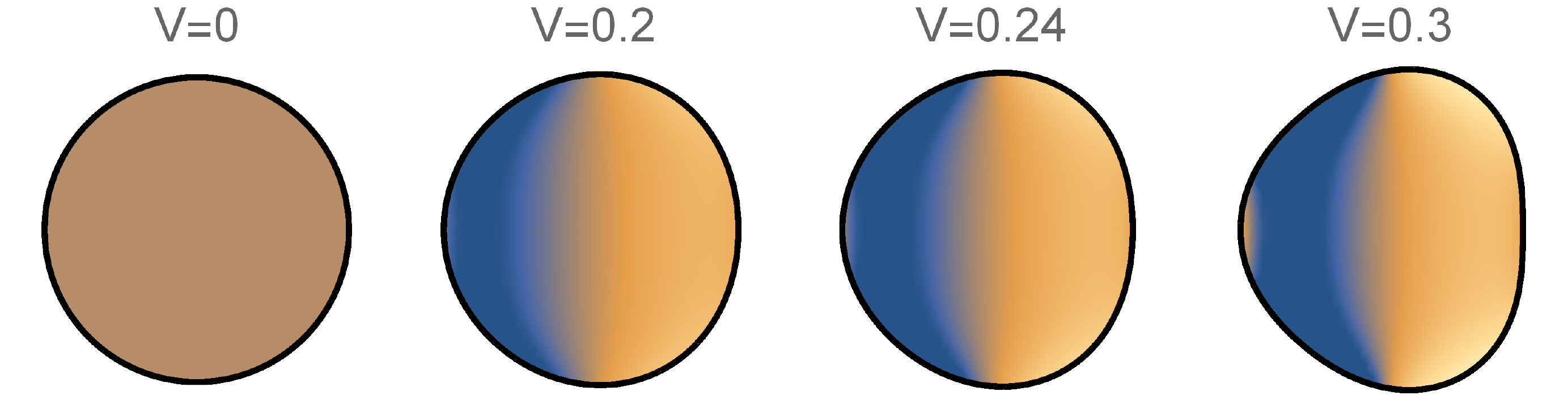}
			\caption{
				Approximate shape and myonsin distribution of traveling wave solutions for $m_0=0.62$, $R=3.6$, $\gamma=3.5$, and $k_e=5.0$ with $V=0,\,0.2,\, 0.24,\,0.3$ bifurcated from the radial steady state. The direction of motion is to the right, and blue color corresponds to higher myosin density.  
				\label{fig:john}
			}
		\end{center}
	\end{figure}

	\section{Asymptotic expansions of  eigenvectors of the linearized problem}
	\label{section_lin_stab_tw_formal}

In this and the next Sections we study the spectrum of the linearized operator around  traveling wave solutions, 
i.e. those established in Theorem \ref{biftwtheorem}. 
We begin with formal asymptotic  
expansions for small velocities $V$. These expansions  will be justified in Section \ref{section_lin_stab_tw_justify}.

It is convenient to pass from the polar coordinates to the parametrization of 
domains via the signed distance $\rho$ from the reference domain $\Omega$. More precisely, given a  solution $\phi=\Phi(x,y,V)$, $\Omega=\Omega(V)$ 
of problem \eqref{tw_Liouvtypeeq}-\eqref{addit_cond_tw}, we describe perturbations $\partial \tilde \Omega$ of the boundary   by the function $\rho$ such that 
$\partial\tilde \Omega=\{(x(s),y(s))+\rho(s)\nu(s);\ (x(s),y(s))\in \partial\Omega(V) \}$, where $s$ is the arc length parametrization of  $\partial\Omega(V)$ and
 $\nu(s)$ denotes the outward pointing unit normal to   $\partial\Omega$. Then the linearized problem around the traveling wave solution writes as 
\begin{equation}
	\label{Hact_flow_lin_op}
	\Delta \phi + m=\zeta \phi \quad\text{in}\ \Omega(V),
	\end{equation}
	\begin{equation}
	\label{Hact_flow_BC_dir_op}
	\zeta(\phi+V\nu_x \rho) =p_{\ast}^\prime(| \Omega(V)|)\int_{\partial \Omega(V)}
	\rho(s)ds
	+\gamma(\rho^{\prime\prime}+\kappa^2 \rho)\quad\text{on}\   \partial\Omega(V),
	\end{equation}
	\begin{equation}
	\label{Hact_flow_sec_BC}
	\partial_t \rho=(\mathcal{A}(V)(m,\rho))_\rho:=\frac{\partial \phi}{\partial \nu}+\frac{\partial^2 \Phi}{\partial \nu^2}\rho-\left(\frac{\partial \Phi}{\partial \tau}+V \nu_y\right)\rho^\prime\quad\text{on}\ \partial \Omega(V),
	\end{equation}
\begin{equation}
	\label{Hmyosin_lin_eq}
	\partial_t m=(\mathcal{A}(V)(m,\rho)))_m:=\Delta {m}+V\partial_x m -\div( \Lambda e^{\Phi-Vx} \nabla {\phi})
	-\div(m\nabla \Phi) \quad\text{in}\   \Omega(V),
	\end{equation}
\begin{equation}
	\label{Hmyosin_lin_BC}
	\partial_\nu m+\Lambda e^{\Phi-Vx}\left(\frac{\partial^2 \Phi}{\partial \nu^2}\rho -\Bigl(\frac{\partial \Phi}{\partial \tau}+V \nu_y\Bigr)\rho^\prime\right)=0
	\quad \text{on}\ \partial  \Omega(V).
\end{equation}
Here and in what follows $\rho^\prime$, $\rho^{\prime\prime}$ denote derivatives of $\rho$ with respect to the arc length $s$, $\kappa$ is the curvature of $\partial\Omega(V)$, and $\partial/\partial\tau$ denotes the tangential derivative on $\partial\Omega(V)$. The linearized operator 
$\mathcal{A}(V)$ appearing in \eqref{Hact_flow_lin_op}-\eqref{Hmyosin_lin_BC} is well defined on smooth $m\in L^2(\Omega(V))$, $\rho\in L^2(\partial\Omega(V))$ 
such that \eqref{Hmyosin_lin_BC} holds. It can be extended to the closed operator in $L^2(\Omega(V))\times L^2(\partial\Omega(V))$ whose domain is 
the set of pairs $(m,\rho)$ from $H^2(\Omega(V))\times H^3(\partial\Omega(V))$ satisfying \eqref{Hmyosin_lin_BC}.

Since traveling wave solutions bifurcate from radial stationary solutions the spectrum of the operator $\mathcal{A}(V)$ for small $|V|$ is close to the 
spectrum of the operator $\mathcal{A}_{ss}$ representing linearization around the radial stationary solution at the critical radius $R=R_0$ (heareafter we  write 
simply $R$ for  brevity). The latter oprerator has zero eigenvalue with multiplicity three while other eigenvalues are bounded away from zero. Therefore in order to study stability of traveling wave solutions it suffices to investigate what happens with zero eigenvalue for small $V\not=0$. Observe that $\forall V$ the operator  
$\mathcal{A}(V)$ has zero eigenvalue with multiplicity at least two. It has the eigenvector 
\begin{equation}
\label{tw_shifts1}
m_1=-\Lambda(V)\partial_x\Phi(x,y,V), \quad \rho_1=\nu_x
\end{equation}
corresponding to the infinitesimal shifts along the $x$-axis, and the generalized eigenvector
\begin{equation}
\label{TwderivativeinV1}
m_2=\partial_V\left(\Lambda(V)e^{\Phi(x,y,V)-Vx}\right), \quad \rho_2=\partial_{\tilde V}\tilde \rho\bigl|_{\tilde V=V}\bigr.,
\end{equation}
 that satisfies $\mathcal{A}(V)(m_2,\rho_2)=(m_1,\rho_1)$, where $\tilde \rho$ describes the boundary of the traveling wave with velocity $\tilde V$ via 
the signed distance to $\partial \Omega(V)$. The zero eigenvalue has multiplicity three for $V=0$, as follows from Lemma \eqref{och_polezn_lemma}.
It will be shown that one eigenvalue $\lambda=\lambda(V)$ becomes nonzero for $V\not =0$ and its asymptotic behavior as $V\to 0$ 
is studied below. The main difficulty in this analysis comes from the fact that the eigenvector corresponding to $\lambda(V)$ merges asymptotically as $V\to 0$ 
with the eigenvector $(m_1,\rho_1)$. Moreover, the next term in the expansion of this eigenvector is proportional to $(m_2,\rho_2)$. That is why the asymptotic problem for $\lambda(V)$ is a kind of singularly perturbed problem.


We seek the eigenvalue $\lambda$ and the eigenvector  $(m, \rho)$ in the form 
		\begin{equation}\label{eq:lambda_ansatz}
		\lambda(V)= \hat \lambda V^2 +\dots,
		\end{equation}
		\begin{equation}
		\label{AnSaTzform}
		m=m_1+\hat \lambda V^2 m_2+V^3 m_3+V^4 m_4 +V^5 m_5+\dots
		\end{equation}
		\begin{equation}
		\rho=\rho_1+\hat \lambda V^2\rho_2+V^3 \rho_3+V^4 \rho_4 +V^5 \rho_5+\dots
		\label{AnSaTzforrho}
		\end{equation}
		with unknown $\hat \lambda$, $m_k$, $\rho_k$ ($k=3,4,5$) which do not depend on $V$, and will be found via perturbation expansion in $V$. In contrast, $m_1$, $m_2$, $\rho_1$, and $\rho_2$ are expressed in terms of the traveling wave solution via \eqref{tw_shifts1} and \eqref{TwderivativeinV1}, and 
{\it do depend} on $V$. Observe that $\lambda(V)$ is an even function of $V$ due to the symmetry with respect to the change $V\mapsto -V$. 
Therefore, the ansatz \eqref{eq:lambda_ansatz} starts with a quadratic term. It is convenient to consider $\phi$ now as independent unknown, seeking this function  in the form 
		\begin{equation}
		\phi =-\partial_x  \Phi +\hat \lambda V^2 \partial_V \Phi+V^3 \phi_3+V^4 \phi_4 +V^5 \phi_5+\dots\, .
		\label{AnSaTzforphi}
		\end{equation}
		
		Substitute these expansions into the equation $\mathcal{A}(V)(m,\rho)=\lambda(V) (m,\rho)$ and collect terms of the order $V^3$, replacing 
		$\Omega(V)$ by the disk $B_R$ (it approximates  $\Omega(V)$ to the order $V^2$, see Remark \ref{rem:boundary}).  This leads to the following problem for $m_3$, $\phi_3$ and  $\rho_3$,
		\begin{equation}\label{eq:phi_equation_third_order}
		\begin{split}
		&\Delta \phi_3+ m_3=\zeta \phi_3 \quad  \text{in}\ B_R,\\
		&\partial_r \phi_3 =0,\quad\zeta \phi_3=p_\ast^\prime(\pi R^2)R\int_{-\pi}^{\pi}\rho_3 d\varphi+\frac{\gamma}{R^2}(\partial^2_{\varphi\varphi}\rho_3+\rho_3)\quad \text{on}\ \partial B_R,
		\end{split}
		\end{equation}
		\begin{equation}\label{eq:myosin_equation_third_order}
		\Delta m_3-m_0 \Delta \phi_3=0\quad \text{in}\ B_R,\quad \partial_r m_3=0\quad\text{on}\ \partial B_R.
		\end{equation}
		Thus, up to the eigenvector corresponding to the infinitisimal shifts of the disk $B_R$,  
		\begin{equation}\label{eq:third_order_solutions}
		\rho_3=\alpha,\ \zeta \phi_3=\alpha(\gamma /R^2+2\pi R p_\ast^\prime(\pi R^2)), \ m_3=\zeta \phi_3.
		\end{equation} 
The parameter $\alpha$ in the solution to the homogeneous problem \eqref{eq:phi_equation_third_order}--\eqref{eq:myosin_equation_third_order} will be determined by considering higher order terms of the expansions.
		
Next we collect terms of the order $V^4$ arriving at the following problem for $m_4$, $\phi_4$ and  $\rho_4$,
		\begin{equation} \label{4first2D}
		\Delta \phi_4+m_4=\zeta\phi_4\quad \text{in}\ B_R,
		\end{equation}
		\begin{equation}\label{5first2D}
		\Delta m_4-m_0 \Delta \phi_4=\hat\lambda^2 m_0 (\Phi_V^0-x)
		+\alpha (\gamma /R^2+2\pi R p_\ast^\prime(\pi R^2))\Delta \Phi_V^0
		\quad
		\text{in}\ B_R,
		\end{equation}
		\begin{equation}\label{6first2D}
		\zeta\left(\phi_4+\alpha \partial_r\Phi_V^0\right) =p_{\ast}^\prime(\pi R^2)R \int_{-\pi}^{\pi}
			\rho_4(\varphi)d\varphi
		+\frac{\gamma}{R^2}(\partial^2_{\varphi\varphi}\rho_4+\rho_4) \quad\text{on}\ \partial B_R,
		\end{equation}
		\begin{equation}\label{7first2D} 
		 \partial_r m_4+\alpha m_0 \partial^2_{r r}\Phi_V^0=0\quad \text{on}\ \partial B_R,
		\end{equation}
		\begin{equation}\label{8first2D}
		\begin{aligned}
		\frac{\partial \phi_4}{\partial r}+
		\alpha\partial^2_{r r}\Phi_V^0=0
\quad \text{on}\ \partial B_R.
		\end{aligned}
		\end{equation}
To determine  solvability of \eqref{4first2D}--\eqref{8first2D} observe that after adding  $\alpha m_0 \partial_{r}\Phi_V^0$ to $m_4$ and 
$\alpha \partial_{r}\Phi_V^0$ to $\phi_4$, the problem is transformed to the form ${\mathcal A}_{ss}(m_4+\alpha m_0 \partial_{r}\Phi_V^0,\rho_4)=
(f(r)\cos\varphi,\varrho\cos\varphi)$ with some function $f(r)$ and a constant $\varrho$. Conditions for solvability of the  latter problem are provided by the  Fredholm alternative, i.e. one has to satisfy orthogonality of  $(f,\varrho)$ to solutions of the adjoint homogeneous problem
	\begin{equation}\label{eqn1adj2D} 
		\Delta \tilde m +\tilde \phi=0\quad\text{in}\ B_R,\quad \partial_r \tilde m=0\quad \text{on}\ \partial B_R,
		\end{equation}
		\begin{equation}\label{eqn2adj2D}
		\Delta \tilde \phi-\zeta\tilde \phi -m_0 \Delta \tilde m=0\quad \text{in}\ B_R,
		\end{equation}
		with boundary conditions
		\begin{equation} \label{eqn3adj2D}
		\tilde\rho- m_0 \tilde m+\tilde \phi=0\ \text{on}\ \partial B_R, 
		\end{equation}
		\begin{equation} 
		\label{eqn4adj2D}
-\frac{\gamma}{R}(\partial^2_{\varphi\varphi}\partial_r\tilde \phi +\partial_r\tilde \phi)
         -\frac{R}{\zeta} p_\ast^\prime(\pi R^2)\int_{-\pi}^{\pi}\partial_r\tilde \phi\, d\varphi=0
\quad \text{on}\ \partial B_R.
		\end{equation}
	This problem has a nontrivial solution $\tilde m=\Phi_V^0-x$  and $\tilde\rho=m_0\tilde m-\tilde\phi$ with 
$\tilde \phi=-(\zeta-m_0)\Phi_V^0-m_0 x$ (note that actually $\tilde\rho=0$). 
In order to identify the unknown coefficient $\alpha$, multiply \eqref{5first2D} by $\tilde m$ and integrate,
		\begin{equation}
		\label{1relation to determine_alpha}
		\begin{split}
		\int_{B_R} (\Delta m_4-m_0 \Delta \phi_4)\tilde m dxdy&=\hat\lambda^2 m_0\int_{B_R}  ( \Phi_V^0-x)^2dxdy\\
		&+\alpha (\gamma /R^2+2\pi R p_\ast^\prime(\pi R^2))\int_{B_R}(\Phi_V^0-x) \Delta (\Phi_V^0-x) dxdy.
		\end{split}
		\end{equation}
The left hand side of \eqref{1relation to determine_alpha} rewrites as follows, using integration by parts,
		\begin{equation}
		\label{2relation to determine_alpha}
		\int_{B_R} (\Delta m_4-m_0 \Delta \phi_4)\tilde m dxdy=\int_{B_R}(\Delta \phi_4 -(\zeta-m_0)\phi_4)\tilde \phi dxdy=
		\int_{\partial B_R} (\partial_r\phi_4 \tilde\phi-\partial_r\tilde \phi \phi_4)ds.
		\end{equation}
	The functions appearing in the right hand side are $\partial_r\phi_4=\alpha(1/R-m_0 R)\cos\varphi $, $\phi_4=-\alpha \cos\varphi$, $\tilde\phi=-m_0 R\cos\varphi$, 
		$\partial_r \tilde\phi=-\zeta\cos\varphi$, therefore
		\begin{equation}
		\label{3relation to determine_alpha}
		\int_{B_R} (\Delta m_4-m_0 \Delta \phi_4)\tilde m dxdy=-m_0 \pi R^2 \alpha(1/R-m_0 R) -\alpha \zeta \pi R=\alpha\pi R ((m_0 R)^2-m_0-\zeta).
		\end{equation}
	Thus we get the following relation between $\hat\lambda$ and $\alpha$,
		\begin{equation}
		\label{4relation to determine_alpha}
		\begin{split}
		&\alpha\left\{(-\gamma /R^2-2\pi R p_\ast^\prime(\pi R^2))\int_{B_R}|\nabla (\Phi_V^0-x)|^2\,dx\,dy-
		\pi R ((m_0 R)^2-m_0-\zeta)\right\}\\
		&\hspace{3.1in}=-\hat\lambda^2 m_0\int_{B_R}  (\Phi_V^0-x)^2dxdy.
		\end{split}
		\end{equation}
Besides  the solution $(\tilde m=\Phi_V^0-x,\tilde\rho=0)$
the  problem \eqref{eqn1adj2D}--\eqref{eqn4adj2D} has exactly one linearly independent solution $(\tilde m=1, \tilde \rho= m_0)$. 
 Since $(f(r)\cos\varphi, \varrho \cos\varphi)$
is orthogonal to $(1,m_0)$ there is a solution of  \eqref{4first2D}--\eqref{8first2D}. Moreover, if we require additionally that $m_4$ has zero mean value  
and $\rho_4$ is orthogonal to $\cos\varphi$ then $\rho_4=0$ and both $m_4$ and $\phi_4$ are represented in the form of  products of  radially symmetric functions 
and $\cos\varphi$. These radially symmetric factors  solve a system of second order ordinary differential equatiuons (with bounded coefficients, except at $r=0$) and therefore extend as solutions of \eqref{4first2D}--\eqref{5first2D} 
 to the whole $\mathbb{R}^2$. Thus the ansatz of the first four terms is well defined in $\Omega(V)$. However it  is not in the domain of the operator $\mathcal{A}(V)$ as the boundary condition \eqref{Hmyosin_lin_BC} is satisfied only approximately (with discrepancy of the order $O(V^5)$). That is why we introduce a correcting term $\tilde m_5$ such that 
\begin{equation}
\label{corrector_m_BC}
\partial_\nu \tilde m_5=-\frac{1}{V}\partial_\nu m_4-\frac{\Lambda(V)}{V^2}e^{\Phi-Vx}\partial^2_{\nu\nu}\Phi \rho_3
\quad\text{on}\ \partial\Omega(V).
\end{equation}
In view of \eqref{7first2D}, bounds \eqref{boundsforrho} and the expansion \eqref{expansion_of_Phi} (see Remark \ref{rem:boundary}) one can show that the right hand side of \eqref{corrector_m_BC} defines functions
uniformly bounded in $C^j(\partial \Omega(V))$ $\forall j\in \mathbb{Z}_+$. Therefore we can define 
$\tilde m_5$ in $\Omega(V)$, e.g., by solving the equation $\Delta \tilde m_5=\tilde m_5$ with the boundary condition \eqref{corrector_m_BC} and set
\begin{equation}
\label{ansatzCorrected}
\tilde W:=(-\Lambda(V)\partial_xe^{\Phi-Vx},\nu_x)+\hat \lambda V^2\partial_{\tilde V} (\Lambda(\tilde V)e^{\Phi-\tilde Vx},\tilde\rho_{\rm tw}(s,\tilde V))\bigl|_{\tilde V=V}+V^3(m_3,\alpha)+V^4(m_4+V\tilde m_5,0),
\end{equation}
where $\tilde\rho_{\rm tw}$ stands for the parametrisation of $\partial \Omega(\tilde V)$ via the signed distance to $\partial \Omega(V)$.   The (corrected) four term ansatz given by \eqref{ansatzCorrected} is in domain of the operator $\mathcal{A}(V)$ and 
introducing the unique solution $\tilde\phi_5$
of 
$$
\Delta \tilde\phi_5+\tilde m_5=\zeta \tilde \phi_5 \quad\text{in}\ \Omega(V)
$$
with the boundary condition
$$
\zeta(\tilde \phi_5+\frac{1}{V}\phi_4 +\frac{1}{V^2}\phi_3+\frac{\nu_x}{V} \rho_3) =
\frac{1}{V^2}\left(p_{\ast}^\prime(|\Omega|)\int_{\partial \Omega(V)}
	\rho_3 ds
	+\gamma\kappa^2 \rho_3\right)\quad\text{on}\ \partial\Omega(V),
$$
we can calculate the components of  $\mathcal {A}(V) W -\hat \lambda V^2 W$:
\begin{multline}
\label{mcomponent}
\frac{1}{V^5}(\mathcal {A} (V)\tilde W -\hat \lambda V^2 \tilde W)_m=
\Delta {\tilde m_5}+V\partial_x \tilde m_5 -\div( \Lambda e^{\Phi-Vx} \nabla {\tilde \phi_5})
	-\div(\tilde m_5\nabla \Phi)
 \\ +\frac{1}{V}\left\{\Delta { m_4}
         +V\partial_x  m_4 -\div(  \Lambda e^{\Phi-Vx} \nabla {\phi_4})	-\div(m_4\nabla \Phi) \right\}
\\
	+\frac{1}{V^2}\left\{\Delta {m_3}+V\partial_x m_3-\div( \Lambda e^{\Phi-Vx} \nabla {\phi_3})
	-\div(m_3\nabla \Phi) \right\}
\\
-\frac{\hat\lambda}{V}\left\{\hat \lambda\partial _V(\Lambda(V)e^{\Phi-Vx}) +V m_3+V^2 m_4+V^3 \tilde m_5\right\},
\end{multline}
\begin{equation}
\label{rhocomponent}
\frac{1}{V^5}(\mathcal {A}(V) \tilde W -\hat \lambda V^2 \tilde W)_\rho=\partial_\nu \tilde \phi_5+
\frac{1}{V}\partial_\nu\phi_4 +\frac{1}{V^2}\partial_\nu\phi_3 +\frac{1}{V^2}\partial^2_{\nu\nu}\Phi \rho_3
-\frac{\hat\lambda}{V}\left\{\hat\lambda\partial _{\tilde V}\tilde \rho_{\rm tw}\bigl|_{\tilde V=V}\bigr.+V\rho_3\right\}.
\end{equation}
Thanks to \eqref{eq:phi_equation_third_order}, \eqref{6first2D} we have $\|\tilde\phi_5\|_{C^j(\partial\Omega(V))}\leq C_j$ $\forall j\in \mathbb{Z}_+$, 
also one can verify that normal derivatives $\partial_\nu \tilde \phi$ are uniformly bounded in $C^j(\partial \Omega(V))$ $\forall j\in \mathbb{Z}_+$, similarly to 
$\partial_\nu \tilde m_5$. Furthermore, since $m_3$, $\phi_3$ are constants the third line of  
\eqref{mcomponent} simplifies to $-m_3\Delta \Phi / V^2$  and substituting $\Delta m_4$ from \eqref{5first2D} we obtain after rearranging terms,  
\begin{equation}
\label{mcomponentbis}
\begin{aligned}
\frac{1}{V^5}(\mathcal {A} (V)\tilde W -\hat \lambda V^2\tilde W)_m=\Delta {\tilde m_5}+V\partial_x \tilde m_5 -\div( \Lambda e^{\Phi-Vx} \nabla {\tilde \phi_5})
	-\div(\tilde m_5\nabla \Phi)
 \\ -\frac{\hat\lambda^2}{V}\left\{\partial _V(\Lambda(V)e^{\Phi-Vx}) - m_0 (\Phi_V^0-x)\right\}
\\+\frac{1}{V}\left\{
         V\partial_x  m_4 -\div(  (\Lambda e^{\Phi-Vx}-m_0) \nabla {\phi_4})	-\div(m_4\nabla \Phi) \right\}
\\
-\hat\lambda ( m_3+V m_4+V^2 \tilde m_5)	+\frac{m_3}{V^2}
	 (V\Delta \Phi_V^0-\Delta \Phi)
\end{aligned}
\end{equation}
\begin{equation}
\label{rhocomponentbis}
\frac{1}{V^5}(\mathcal {A}(V) \tilde W -\hat \lambda V^2\tilde W)_\rho=\partial_\nu \tilde \phi_5+
\frac{1}{V}\partial_\nu\phi_4 +\frac{1}{V^2}\partial^2_{\nu\nu}\Phi \rho_3
-\frac{\hat\lambda}{V}(\hat\lambda\partial _V\tilde \rho_{\rm tw}+V\rho_3).
\end{equation}
Thus $\|(\mathcal {A} (V)\tilde W -\hat \lambda V^2\tilde W)_m\|_{L^2(\Omega(V))}
+ \|(\mathcal {A} (V)\tilde W -\hat \lambda V^2 \tilde W)_{\rho}\|_{L^2(\partial\Omega(V))}\leq C|V|^5$.

Next assuming that there exists the next term $V^5(m_5,\rho_5)$ of the asymptotic expansion, we have, neglecting higher order terms, 
$\mathcal{A}(V)(m_5,\rho_5)=-\frac{1}{V^5}(\mathcal {A} (V)W -\hat \lambda V^2 W)$. Since the null space of the adjoint operator $\mathcal{A}^\ast (V)$ contains 
$(1,\Lambda(V) e^{\Phi-Vx})$ (see \eqref{eq:adjoint_eigenvalue}),  we will require that 
\begin{equation}
\label{umovarozvyaznosti}
\int_{\Omega(V)}(\mathcal {A} (V)\tilde W -\hat \lambda V^2 \tilde W)_m\, dx dy
+\Lambda(V)\int_{\partial \Omega(V)}(\mathcal {A}(V) \tilde W -\hat \lambda V^2 \tilde W)_\rho e^{\Phi-Vx}\, ds=0.
\end{equation}
We will see that this condition yields an asymptotic formula for $\hat\lambda$.

Let $I(\hat\lambda,V)$ denote the left hand side of \eqref{umovarozvyaznosti}, then with the help of integration by parts and 
\eqref{corrector_m_BC}, \eqref{rhocomponentbis} we get
\begin{equation}
\label{posledniiboi}
\begin{aligned}
I(\hat\lambda)&= \frac{1}{V}\int_{\partial \Omega(V)}(\Lambda(V)e^{\Phi-Vx}\partial_\nu\phi_4 -\partial_\nu m_4)\, ds  -\int_{\partial \Omega(V)}\hat\lambda\rho_3\Lambda(V)e^{\Phi-Vx}\,ds
\\
&+\int_{\Omega(V)} \left(V\partial_x \tilde m_5-\div(\tilde m_5 \nabla \Phi)\right)\, dxdy
+\frac{1}{V}\int_{\Omega(V)} (\hat\lambda^2 m_0(\Phi_V^0-x)+m_3 \Delta \Phi_V^0) \,dxdy
\\
&+\frac{1}{V}\int_{\Omega(V)} \left\{
         V\partial_x  m_4 -\div(  (\Lambda e^{\Phi-Vx}-m_0) \nabla {\phi_4})	-\div(m_4\nabla \Phi) \right\}\,dxdy\\
&- \hat \lambda \int_{\Omega(V)} ( m_3+V m_4+V^2 \tilde m_5)\,dxdy 
\\
&-\frac{\hat\lambda^2}{V}\left\{\int_{\partial \Omega(V)} \partial _V(\Lambda(V)e^{\Phi-Vx})\,ds+\int_{\partial \Omega(V)} \Lambda(V)e^{\Phi-Vx}\partial _V\tilde \rho_{\rm tw}\, ds\right\}.
\end{aligned}
\end{equation}
This formula is further simplified by observing that the first term in the second line of \eqref{posledniiboi} is zero thanks to the fact that $\partial_\nu\Phi=\nu_x$ on 
$\partial \Omega(V)$. Also, using \eqref{5first2D} in the second term, then collecting all the terms with the prefactor $\frac{1}{V}$ except the last line, we see  that these terms cancel each other. Finally, notice that the last line of \eqref{posledniiboi} equals $-\frac{\hat\lambda^2}{V} M^\prime(V)$.  Thus
\begin{equation}
\label{pochtiformula}
I(\hat\lambda,V)=-\hat\lambda\alpha\pi R(2 m_0+ \gamma/R+2\pi R^2 p^\prime_\ast (\pi R^2)+O(V))-\frac{\hat\lambda^2}{V} M^\prime(V),
\end{equation}
and substituting $\alpha$ from \eqref{4relation to determine_alpha} we see that the equation $I(\hat\lambda,V)=0$ has nonzero solution 
\begin{equation}
\label{sovsempochtiformula}
\hat\lambda (V)=-\frac{(\gamma /R^2+2\pi R p_\ast^\prime(\pi R^2))\int_{B_R}|\nabla (\Phi_V^0-x)|^2\,dxdy+
			\pi R ((m_0 R)^2-m_0-\zeta)}{\pi m_0 R (2m_0+\gamma/R+2\pi R^2p_{\ast}^\prime(\pi R^2) +O(V))\int_{B_R}  (\Phi_V^0-x)^2\,dxdy} \frac{ M^\prime(V)}{V}
\end{equation}
for sufficiently small $V$, provided that $M^\prime(V)\not=0$ when $V\not=0$. The first factor in \eqref{sovsempochtiformula}
simplifies, by virtue of \eqref{eq:transversality_condition_relation}--\eqref{formula_for_E}, to  $-\frac{d E}{d M}\bigl|_{M=M_0}\bigr.$, where 
$M_0$ is the total myosin mass of the stationary solution (with $V=0$). 
 In the nondegenerate case, $M^{\prime\prime}(0)\not=0$, the solution 
$\hat \lambda(V)$ has nonzero finite limit
\begin{equation}
\label{formulka}
\hat \lambda=-\frac{d E}{d M}\Bigl|_{M=M_0}\Bigr.\frac{d^2 M}{d V^2}\Bigl|_{V=0}\Bigr..
\end{equation}
If  $M^{\prime\prime}(0)=0$ but $M^\prime(V)\not =0$ for small $V\not=0$ there still exists a nonzero solution $\hat\lambda(V)$ of the equation
$I(\hat\lambda,V)=0$ and we can repeat the above construction observing that in this case $m_3$, $\rho_3$, $m_4$ and $\tilde m_5$ contain  the small 
factor $\alpha =O\left( (M^\prime(V)/V)^2\right)$.

We summarize the results   of 
the above asymptotic analysis in the following lemma  that provides the construction of the approximation  for the eigenvalue $\lambda(V)$ and the corresponding eigenvector.
It also plays an important role in the justification of the asymptotic formula for the eigenvalue $\lambda(V)$ in Section \ref {section_lin_stab_tw_justify}.

\begin{lem}
\label{TestFunctionLem} Assume that  $M^\prime(V)\not =0$ for small $V\not=0$, then there exists  $\hat\lambda(V)$ and $\tilde W=(\tilde m,\tilde\rho)$ in the domain of $\mathcal{A}(V)$ 
such that 
\begin{equation}\label{order2}
\tilde W=(-\Lambda(V)\partial_x e^{\Phi-Vx},\nu_x) +O(V^2),  
\end{equation}
\begin{equation}
\label{OTSENKA}
\|(\mathcal {A} (V)\tilde W -\hat \lambda(V) V^2\tilde W)_m\|_{L^2(\Omega(V))}
+ \|(\mathcal {A} (V)\tilde W -\hat \lambda V^2 \tilde W)_{\rho}\|_{L^2(\partial\Omega(V))}\leq C |M^\prime(V)|^2 |V|^3
\end{equation}
and 
\begin{equation}
\label{orthogonality}
\langle \mathcal {A} (V)\tilde W -\hat \lambda(V) V^2\tilde W, W_1^\ast \rangle_{L^2}=0,
\end{equation}
where $\langle \,\cdot\,,\,\cdot\,\rangle_{L^2}$  denotes the pairing defined by \eqref{pairingL2}, and  $W_1^\ast=(1,\Lambda(V) e^{\Phi-Vx})$. 
Moreover, $\hat\lambda(V)\not =0$  for $V\not=0$ and is given by the asymptotic formula
\begin{equation}
\label{Final_formula}
\hat \lambda(V)=-\frac{d E}{d M}\Bigl|_{M=M_0}\Bigr.\frac{M^\prime(V)}{ V}(1+O(V)) \quad \text{as}\ V\to 0.
\end{equation}
\end{lem}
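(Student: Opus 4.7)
The plan is to realize $\tilde W$ as the corrected four-term ansatz \eqref{ansatzCorrected} constructed formally just above the lemma, and to identify $\hat\lambda(V)$ as the unique small nonzero root of the scalar solvability equation obtained by pairing the residual with the adjoint null vector $W_1^\ast=(1,\Lambda(V)e^{\Phi-Vx})$. This reduces the statement to three independent verifications: that $\tilde W$ lies in $\mathrm{Dom}(\mathcal{A}(V))$ and satisfies \eqref{order2}; that the residual obeys the bound \eqref{OTSENKA}; and that the scalar equation $I(\hat\lambda,V)=0$ admits a root obeying \eqref{Final_formula}.

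For the first point, I assemble $(m_3,\rho_3,\phi_3)$ from \eqref{eq:third_order_solutions} with a free scalar $\alpha$, then solve the fourth-order problem \eqref{4first2D}--\eqref{8first2D}. Its Fredholm alternative, whose nontrivial obstruction is the solution $(\tilde m,\tilde\rho)=(\Phi_V^0-x,0)$ of the adjoint homogeneous problem \eqref{eqn1adj2D}--\eqref{eqn4adj2D}, produces the algebraic relation \eqref{4relation to determine_alpha} binding $\alpha$ to $\hat\lambda^2$. The normalizations ``$m_4$ has zero mean'' and ``$\rho_4\perp\cos\varphi$'' pin down $(m_4,\rho_4)$ uniquely, with $\rho_4\equiv 0$ forced by the reflection symmetry. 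The Neumann defect of order $V^5$ in \eqref{Hmyosin_lin_BC} is then killed by the corrector $V^5\tilde m_5$, where $\tilde m_5$ solves $\Delta\tilde m_5=\tilde m_5$ in $\Omega(V)$ with the boundary data \eqref{corrector_m_BC}; the bounds \eqref{boundsforrho} and the expansion \eqref{expansion_of_Phi}, together with standard elliptic regularity, supply uniform $C^j$-control on $\tilde m_5$, completing the construction of $\tilde W\in\mathrm{Dom}(\mathcal{A}(V))$ and establishing \eqref{order2}.

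For the second point, the term-by-term expansions \eqref{mcomponentbis}--\eqref{rhocomponentbis} show that, after the cancellations encoded in \eqref{eq:third_order_solutions} and \eqref{5first2D}, the pointwise residual is $O(V^5)$ with an explicit prefactor linear in $\alpha$; combined with the scaling $\alpha=O(|M'(V)/V|^2)$ that follows from \eqref{4relation to determine_alpha} once $\hat\lambda$ is chosen as in \eqref{Final_formula}, this gives \eqref{OTSENKA}. For the third point, the scalar equation
\[
I(\hat\lambda,V) := \langle \mathcal{A}(V)\tilde W-\hat\lambda V^2\tilde W,\; W_1^\ast\rangle_{L^2}=0
\]
reduces, via the integration-by-parts chain \eqref{posledniiboi} and the simplifications enabled by $\partial_\nu\Phi=\nu_x$ on $\partial\Omega(V)$ and by \eqref{5first2D}, to the quadratic form \eqref{pochtiformula}; inserting $\alpha=\alpha(\hat\lambda)$ from \eqref{4relation to determine_alpha} yields the nontrivial root \eqref{sovsempochtiformula}, and the identities \eqref{eq:transversality_condition_relation}--\eqref{formula_for_E} of Lemma \ref{poleznyeformul'ki} collapse the leading prefactor to $-dE/dM\bigl|_{M=M_0}$, producing \eqref{Final_formula}. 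The orthogonality \eqref{orthogonality} is built into the defining equation for $\hat\lambda$, hence holds by construction.

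The main obstacle is making the whole argument uniform in $V$ across both the nondegenerate regime $M''(0)\neq 0$ and the degenerate one $M''(0)=0$: one must verify that every constituent of the ansatz depends linearly on $\alpha$ (so that the residual estimate \eqref{OTSENKA} scales with the sharp power $|M'(V)|^2|V|^3$ rather than some coarser bound), and that the $O(V)$ correction inside the bracket in \eqref{pochtiformula} does not spoil the identification of the leading coefficient in \eqref{Final_formula} with $-dE/dM$. Both reduce to direct inspection of the explicit formula \eqref{sovsempochtiformula} together with the fact that $M'(V)/V$ stays bounded as $V\to 0$.
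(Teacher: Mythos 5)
Your proposal follows essentially the same route as the paper: the paper does not give a separate proof of this lemma but rather presents it as a summary of the formal asymptotic construction carried out immediately before it (the four-term ansatz \eqref{ansatzCorrected}, the Fredholm alternative yielding \eqref{4relation to determine_alpha}, the corrector $\tilde m_5$, and the solvability equation $I(\hat\lambda,V)=0$ simplified via Lemma \ref{poleznyeformul'ki}). You have correctly identified all these ingredients, including the key observation that $\alpha=O\bigl((M'(V)/V)^2\bigr)$ is what upgrades the crude $O(V^5)$ residual bound to the sharper $C|M'(V)|^2|V|^3$ needed to cover the degenerate case $M''(0)=0$.
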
   


%
As usual in the spectral analysis of non self-adjoint boundary value problems, the adjoint operator  plays an important role. To define the adjoint operator introduce, for given 
smooth functions $\tilde m$ and $\tilde \rho$ defined on $\Omega(V)$ and $\partial \Omega(V)$, respectively, the auxiliary 
function $\tilde \phi$ as the unique solution of the problem
\begin{equation}
	\label{pressure_adj_tw}
	\Delta\tilde \phi -\zeta\tilde\phi -{\rm div}(\Lambda e^{\Phi-Vx}\nabla \tilde m)=0\quad \text{in}\ \Omega(V), \quad \tilde\phi=\Lambda(V) e^{\Phi-Vx}\tilde m-\tilde\rho\quad\text{on}\ \partial \Omega(V).
\end{equation}
Then one derives, via integration by parts, that the adjoint to  $\mathcal{A}(V)$ with respect to the pairing
\begin{equation}
\label{pairingL2}
\langle (m,\rho), (\tilde m, \tilde \rho)\rangle_{L^2} =\int_{\Omega(V)} m\tilde m\, dx dy +\int_{\partial \Omega(V)} \rho\tilde \rho \, ds 
\end{equation}
 is the  
following operator $\mathcal{A}^\ast(V)$:
\begin{equation}
	\label{myosin_adj_tw}
	\left({\mathcal A}^{\ast}(\tilde m, \tilde \rho)\right)_{\tilde m}=\Delta \tilde m +\nabla\Phi\cdot \nabla \tilde m -V\partial_x \tilde m+\tilde \phi
	\quad\text{in}\ \Omega(V),\quad \text{with} \quad \partial_\nu \tilde m=0\quad\text{on}\ \partial\Omega(V),
\end{equation}
\begin{multline}
	\label{rho_adj_tw}
\left({\mathcal A}^{\ast}(\tilde m, \tilde \rho)\right)_{\tilde\rho}=
	\partial^2_\nu\Phi ( \tilde \rho-\tilde m\Lambda e^{\Phi-Vx})+\partial_\tau\left((\partial_\tau \Phi+V\nu_y)( \tilde \rho-\tilde m\Lambda e^{\Phi-Vx})\right)
	\\
	+(V\nu_x -\frac{\gamma\kappa^2}{\zeta})\partial_\nu\tilde\phi 
	-\frac{\gamma}{\zeta}
	\partial^2_{\tau\tau}\partial_\nu\tilde\phi -\frac{p_\ast^\prime(|\Omega(V)|)}{\zeta}\int_{\partial \Omega(V)}\partial_\nu\tilde\phi\, ds
\quad\text{on} \ \partial\Omega(V).
\end{multline}

Observe that the definition of $\mathcal{A}^\ast(V)$ admits an important simplification. Namely, one can express the action of the operator $\mathcal{A}^\ast(V)$
in terms of the only function $\tilde \phi$: in view of \eqref{pressure_adj_tw} we have 
	\begin{equation}
	\label{myosin_adjBIS_tw}
	\left({\mathcal A}^{\ast}(V)(\tilde m, \tilde \rho)\right)_{\tilde m} =\frac{1}{\Lambda(V)  e^{\Phi-Vx}}(\Delta\tilde \phi -\zeta\tilde\phi) +\tilde \phi,
	\end{equation}
	and, since $\tilde\phi=\Lambda(V) e^{\Phi-Vx}\tilde m-\tilde\rho$ on $\partial \Omega(V)$,  \eqref{rho_adj_tw} rewrites as 
	\begin{multline}
	\label{rho_adjBIS_tw}
	\left({\mathcal A}^{\ast}(\tilde m, \tilde \rho)\right)_{\tilde\rho}=
	-\partial^2_\nu\Phi \tilde\phi-\partial_\tau((\partial_\tau \Phi+V\nu_y)\tilde \phi)
	\\
+(V\nu_x -\frac{\gamma\kappa^2}{\zeta})\partial_\nu\tilde\phi 
	-\frac{\gamma}{\zeta}
	\partial^2_{\tau\tau}\partial_\nu\tilde\phi -\frac{p_\ast^\prime(|\Omega(V)|)}{\zeta}\int_{\partial \Omega(V)}\partial_\nu\tilde\phi\, ds.
	\end{multline}
	Since ${\rm div}(\Lambda e^{\Phi-Vx}\nabla \tilde m)=\Delta\tilde \phi -\zeta\tilde\phi $  in $\Omega(V)$ and $\partial_\nu \tilde m=0$ on 
$\partial\Omega(V)$ the following additional condition 
	\begin{equation}
	\label{ad_cond}
	\int_{\partial \Omega(V)} \partial_\nu\tilde\phi\, ds=\zeta\int_{\Omega(V)}\tilde\phi\,dxdy
	\end{equation}
must be satisfied by $\tilde \phi$. 
Then one can reconstruct $\tilde m$, up to an additive constant, by solving  \eqref{pressure_adj_tw}. 

The following equivalent form of \eqref{rho_adjBIS_tw} is obtained by using the equation and the boundary conditions from \eqref{tw_Liouvtypeeq}-\eqref{addit_cond_tw}  on the boundary,
\begin{multline}
	\label{rho_adjBIS_BIS_tw}
	\left({\mathcal A}^{\ast}(V)(\tilde m, \tilde \rho)\right)_{\tilde\rho}=
	(\Lambda(V)e^{\Phi-Vx}-\zeta\Phi+\kappa V \nu_x )\tilde\phi+\frac{\gamma \partial_\tau \kappa}{\zeta}\partial_\tau \tilde\phi -V\partial_\tau(\nu_y\tilde \phi)
	\\
+(V\nu_x -\frac{\gamma\kappa^2}{\zeta})\partial_\nu\tilde\phi 
	-\frac{\gamma}{\zeta}
	\partial^2_{\tau\tau}\partial_\nu\tilde\phi -\frac{p_\ast^\prime(|\Omega(V)|)}{\zeta}\int_{\partial \Omega(V)}\partial_\nu\tilde\phi\, ds.
	\end{multline}

While the generalized eigenspace of  $\mathcal{A}(V)$ corresponding to the zero eigenvalue is explicitly given in terms of the solutions  $\phi=\Phi(x,y,V)$, 
$\Omega=\Omega(V)$ 
of the free boundary problem \eqref{tw_Liouvtypeeq}--\eqref{addit_cond_tw}, for the operator $\mathcal{A}^\ast(V)$ we know explicitly only the eigenvector 
\begin{equation}\label{eq:adjoint_eigenvalue}
W_1^\ast=(1,\Lambda(V)e^{\Phi-Vx})
\end{equation}
(which is related to the conservation of the total myosin  mass) while the corresponding generalized eigenvector for $V\not= 0$ has more complicated structure and exhibits singular behavior.  Namely, 
we will show that  if  $\mathcal{A}^\ast(V)(\tilde m,\tilde\rho)=(1,\Lambda(V)e^{\Phi-Vx})$ then $(\tilde m,\tilde\rho)$ blows up as $1/V$ as $V\to 0$. After 
normalizing it is natural to consider the problem in the form $\mathcal{A}^\ast(V)(\tilde m,\tilde\rho)=Vk(1,\Lambda(V)e^{\Phi-Vx})$ with bounded 
$(\tilde m,\tilde\rho)$.


We postulate  the ansatz 
\begin{equation}\label{eq:adjoint_operator_generalized_eigenvector_ansatz}
\tilde m=\Phi_V^0-x+V\tilde m_1+\dots,\quad \tilde\rho=V\tilde \rho_1+\dots,
\end{equation}
and substitute it in the equation $\mathcal{A}^\ast(V)(\tilde m,\tilde\rho)=Vk(1,\Lambda(V)e^{\Phi-Vx})$ with unknown for the moment constant $k$. Note that 
the first term $(\Phi_V^0-x,0)$ of the proposed ansatz is an eigenvector of $\mathcal{A}^\ast(0)$ (with $\tilde \phi=-(\zeta-m_0)\Phi_V^0-m_0 x$), while collecting terms of the order $V$ yields (as above we replace 
$\Omega(V)$ by the disk $B_R$ which approximates $\Omega(V)$ to the order $V$) 
\begin{equation}\label{eq:adjoint_operator_generalized_eigenvector_PDE_1}
\Delta\tilde m_1
+\tilde \phi_1=k-|\nabla (\Phi_V^0-x)|^2 \quad \text{in}\ B_R,\quad \partial_r \tilde m_1=0\quad\text{on}\ \partial B_R,
\end{equation}
\begin{equation}\label{eq:adjoint_operator_generalized_eigenvector_PDE_2}
\Delta\tilde \phi_1 -\zeta\tilde\phi_1={m_0}\Delta \tilde m_1  +m_0{\rm div}\left((\Phi_V^0-x)\nabla (\Phi_V^0-x)\right)\quad \text{in}\ B_R,
\end{equation}
\begin{equation}\label{eq:adjoint_operator_generalized_eigenvector_BC_1}
\tilde \phi_1=m_0\tilde m_1+m_0 R^2 \cos^2\varphi-\tilde \rho_1 \quad \text{on}\ \partial B_R,
\end{equation}
\begin{equation}\label{eq:adjoint_operator_generalized_eigenvector_BC_2}
-\frac{\gamma}{R^2 \zeta}(\partial_r \tilde\phi_1 +\partial^2_{\varphi\varphi} \partial_r \tilde\phi_1)-\frac{p_\ast^\prime(\pi R^2)}{\zeta}
\int_{-\pi}^\pi \partial_r \tilde\phi_1 R\,d\varphi =m_0(k-\partial^2_{rr} \Phi_V^0  R \cos\varphi-\cos (2\varphi))+\zeta \cos^2 \varphi
\quad \text{on}\ \partial B_R.
\end{equation}
Introducing a solution $f$ of $\Delta f={\rm div}\left((\Phi_V^0-x)\nabla (\Phi_V^0-x)\right)$ in $B_R$, $\partial_r f=0$ on $\partial B_R$, we can rewrite problem 
 \eqref{eq:adjoint_operator_generalized_eigenvector_PDE_1}-\eqref{eq:adjoint_operator_generalized_eigenvector_BC_2} in the operator form:
\begin{multline*}
\mathcal{A}^\ast_{ss}(\tilde m_1+f,\tilde \rho_1-m_0 f -m_0 R^2 \cos^2\varphi)\\
=(k+(\Phi_V^0-x)\Delta (\Phi_V^0-x), 
m_0(k-\partial^2_{rr} \Phi_V^0  R \cos\varphi-\cos (2\varphi))+\zeta \cos^2 \varphi),
\end{multline*}
and since the null space of $\mathcal{A}^\ast_{ss}$ is nonzero, we can use solvability conditions to identify $k$. Indeed, the operator $\mathcal{A}_{ss}$ has the eigenvector  
 $(\gamma/R^2+2\pi R p_\ast^\prime(\pi R^2), 1)$ corresponding to the zero eigenvalue, and we necessarily have
\begin{multline*}
(\gamma/R^2+2\pi R p_\ast^\prime(\pi R^2))\left\{\pi R^2 k -\int_{B_R}  |\nabla  (\Phi_V^0-x) |^2\, dxdy\right\}\\+\int_{-\pi}^{\pi}
\left\{k m_0+(m_0-m_0^2 R^2) \cos^2\varphi+\zeta \cos^2 \varphi\right\} R\,d\varphi=0.
\end{multline*}
After rearranging terms and using \eqref{formula_for_E} this yields
\begin{equation}
\label{karavno}
k=k_0:=m_0 \frac{d E}{d M}\Bigl|_{M=M_0}\Bigr.\int_{B_R} (\Phi_V^0-x)^2dxdy.
\end{equation}
Solving  \eqref{eq:adjoint_operator_generalized_eigenvector_BC_2} we find
\begin{equation}
\label{priblizhBC}
\partial_r\tilde\phi_1(R,\varphi)=A\cos(2\varphi)+B =2A\cos^2\varphi+(B-A),
\end{equation}
where 
\begin{equation}
\label{defAB}
A=\frac{R^2\zeta}{6\gamma}(\zeta-m_0-m_0^2 R^2), \quad B=\frac{\zeta}{2}(2k_0 m_0+m_0-m_0^2 R^2+\zeta)/\left\{-\gamma/R^2-2\pi R p_\ast^\prime(\pi R^2))\right\}. 
\end{equation}
Also, eliminating $\tilde m_1$ from \eqref{eq:adjoint_operator_generalized_eigenvector_PDE_1}-\eqref{eq:adjoint_operator_generalized_eigenvector_PDE_2} we have that $\tilde \phi_1$ satisfies 
\begin{equation}
\label{EscheOdno Uravnen}
\frac{1}{m_0}\left(\Delta\tilde \phi_1 -\zeta\tilde\phi_1\right)+\tilde \phi_1 = k_0 +(\Phi_V^0-x)\Delta (\Phi_V^0-x)
\end{equation}
in $B_R$. The unique solution of this equation with boundary condition \eqref{priblizhBC} is represented as the sum of a radially symmetric function and the product of another radially symmetric function with $\cos(2\varphi)$, therefore it extends as a solution of  \eqref{EscheOdno Uravnen} to the entire  $\mathbb{R}^2$. 
Thus the  function 
\begin{equation}
\label{Priblizhennaya_tilde_phi}
\tilde \phi=-(\zeta-m_0)\Phi_V^0-m_0 x+V\tilde \phi_1
\end{equation}
is well defined on $\Omega(V)$. One can define $\tilde m_1$  by solving \eqref{eq:adjoint_operator_generalized_eigenvector_PDE_1} and then $\tilde \rho_1$ by
\eqref{eq:adjoint_operator_generalized_eigenvector_BC_1}, completing the construction of the
ansatz \eqref{eq:adjoint_operator_generalized_eigenvector_ansatz}. The properties of $\tilde \phi$ needed for the justification of 
\eqref{eq:adjoint_operator_generalized_eigenvector_ansatz} are collected in

\begin{lem}
\label{muchitelnayalemma} The function $\tilde \phi$ given by \eqref{Priblizhennaya_tilde_phi} satisfies for small $V$ 
\begin{equation}
\label{PervoeProp}
\left\|\Delta \tilde \phi -\zeta \tilde\phi +\Lambda(V)e^{\Phi-Vx}\tilde\phi-k_0V\Lambda(V)e^{\Phi-Vx}\right\|_{C^j(\overline\Omega(V))}=O(V^2) \quad \forall j\in\mathbb{Z}_{+},
\end{equation}
\begin{equation}
\label{VtoroeProp}
\left\|\partial_\nu\tilde \phi-\left(-\zeta \nu_x+2AV\nu_x^2+V(B-A)\right)\right\|_{C^j(\partial\Omega(V))}=O(V^2)\quad \forall j\in\mathbb{Z}_{+} ,
\end{equation}
and 
\begin{equation}
\label{TreteProp}
\int_{\partial\Omega(V)} \partial_\nu \tilde \phi  \, ds -\zeta \int_{\Omega(V)}\tilde \phi \, dx dy=O(V^2).
\end{equation}
\end{lem}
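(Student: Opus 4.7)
The plan is to verify \eqref{PervoeProp}--\eqref{TreteProp} by direct substitution, using the defining PDEs for $\Phi_V^0$ and $\tilde\phi_1$, the smooth expansion of the traveling wave data from Remark \ref{rem:boundary}, and the specific value \eqref{karavno} of $k_0$ picked out by the solvability condition that produced it.

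For \eqref{PervoeProp}, I would start from the expansion $\Lambda(V)e^{\Phi-Vx} = m_0 + V m_0(\Phi_V^0 - x) + O(V^2)$ in $C^j(\overline\Omega(V))$ uniformly in $V$, which follows from \eqref{expansion_of_Phi} together with $\Lambda(V) = \Lambda(0) + O(V^2)$. The key algebraic identity is $\Delta(\Phi_V^0 - x) = -\tilde\phi_0$ (equivalently, $\Delta\tilde\phi_0 - \zeta\tilde\phi_0 = -m_0\tilde\phi_0$), an immediate consequence of \eqref{def_of_Phi_V}. Combining this with \eqref{EscheOdno Uravnen} yields $\Delta\tilde\phi - \zeta\tilde\phi + m_0\tilde\phi = V m_0 k_0 - V m_0(\Phi_V^0-x)\tilde\phi_0 + O(V^2)$. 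On the other hand, $(\Lambda(V)e^{\Phi-Vx} - m_0)\tilde\phi = V m_0(\Phi_V^0-x)\tilde\phi_0 + O(V^2)$, so the two $V m_0(\Phi_V^0-x)\tilde\phi_0$ contributions cancel, leaving $V m_0 k_0 + O(V^2)$; and $k_0 V \Lambda(V)e^{\Phi-Vx} = V m_0 k_0 + O(V^2)$, which gives \eqref{PervoeProp}. Uniform $C^j$-control is inherited from the smoothness of $\Phi_V^0$, $\tilde\phi_1$, and $\Phi$ on $\overline\Omega(V)$, using Remark \ref{rem:boundary} and the fact that $\Phi_V^0$ and $\tilde\phi_1$ extend smoothly to $\mathbb{R}^2$.

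For \eqref{VtoroeProp}, I would Taylor-expand $\partial_\nu(\tilde\phi_0 + V\tilde\phi_1)$ from $\partial B_R$ onto $\partial\Omega(V)$, which is $O(V^2)$-close to $\partial B_R$ in $C^j$ by \eqref{boundsforrho}; both the foot-point and the outward normal shift by $O(V^2)$. On $\partial B_R$ one computes $\partial_r\tilde\phi_0 = -(\zeta-m_0)\cos\varphi - m_0\cos\varphi = -\zeta\cos\varphi$ via \eqref{def_of_Phi_Vadditional}, and $\partial_r\tilde\phi_1$ is given by \eqref{priblizhBC}; replacing $\cos\varphi$ by $\nu_x$ and $\cos^2\varphi$ by $\nu_x^2$ introduces only $O(V^2)$ errors. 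For \eqref{TreteProp}, I integrate \eqref{PervoeProp} over $\Omega(V)$; the divergence theorem reduces the claim to $\int_{\Omega(V)}\Lambda e^{\Phi-Vx}\tilde\phi\,dx\,dy = k_0 V M(V) + O(V^2)$. Substituting $\Omega(V)$ by $B_R$ modulo $O(V^2)$ and expanding the integrand, the $O(1)$ term vanishes because $\tilde\phi_0$ is odd in $\cos\varphi$ on $B_R$ (so also $M(V) = M_0 + O(V^2)$), and the required $O(V)$ identity becomes $\int_{B_R}\tilde\phi_1\,dx\,dy + \int_{B_R}(\Phi_V^0-x)\tilde\phi_0\,dx\,dy = k_0 \pi R^2$. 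To verify this, integrate \eqref{EscheOdno Uravnen} over $B_R$ (the boundary flux evaluates to $\int\partial_r\tilde\phi_1\,ds = 2\pi R B$ via \eqref{priblizhBC}) and use the identity $(\Phi_V^0-x)\tilde\phi_0 = -(\Phi_V^0-x)\Delta(\Phi_V^0-x)$ together with $\int_{B_R}(\Phi_V^0-x)\Delta(\Phi_V^0-x)\,dx\,dy = -\int_{B_R}|\nabla(\Phi_V^0-x)|^2\,dx\,dy$ (the boundary term vanishes because $\partial_r(\Phi_V^0-x) = 0$ on $\partial B_R$).

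The main technical obstacle is this last algebraic verification: the identity reduces to a linear relation among $A$, $B$, $k_0$, and $\int|\nabla(\Phi_V^0-x)|^2\,dx\,dy$, which must hold as a consequence of the defining formula \eqref{karavno} for $k_0$, after invoking \eqref{eq:transversality_condition_relation}--\eqref{formula_for_E} and the explicit expressions \eqref{defAB} for $A,B$. This is not a coincidence: the solvability condition that fixed $k_0$ is essentially the integrated mass balance \eqref{TreteProp}, and the remaining work is the bookkeeping that makes this correspondence explicit. Away from this identity, the uniform $C^j$-estimates are routine, enabled by the global smoothness of $\Phi_V^0$ and $\tilde\phi_1$ on $\mathbb{R}^2$ together with the $C^j$-smallness of $\rho_{\rm tw}$ from \eqref{boundsforrho}.
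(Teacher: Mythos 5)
Your argument is correct, and for the first two bounds it is essentially the paper's route made explicit: direct substitution using \eqref{expansion_of_Phi}, the relation $\Lambda(V)=\Lambda(0)+O(V^2)$, and the defining PDEs for $\Phi_V^0$ and $\tilde\phi_1$ handle \eqref{PervoeProp} (with the cancellation of the two $Vm_0(\Phi_V^0-x)(-(\zeta-m_0)\Phi_V^0-m_0x)$ contributions, which you correctly identify as the key step), while the Taylor transfer from $\partial B_R$ to $\partial\Omega(V)$ using \eqref{priblizhBC} and \eqref{boundsforrho} handles \eqref{VtoroeProp}.

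For \eqref{TreteProp} you take a genuinely different and longer route. The paper's argument is direct: it observes that $\int_{\partial B_R}\partial_r\tilde\phi\,ds-\zeta\int_{B_R}\tilde\phi\,dxdy=0$ holds \emph{exactly}, so the entire $O(V^2)$ error comes from replacing $B_R$ by $\Omega(V)$ via \eqref{boundsforrho}. That exact vanishing is obtained for free from the construction: integrating \eqref{eq:adjoint_operator_generalized_eigenvector_PDE_2} over $B_R$ and using $\partial_r\tilde m_1=0$ and $\partial_r(\Phi_V^0-x)=0$ on $\partial B_R$ gives $\int_{\partial B_R}\partial_r\tilde\phi_1-\zeta\int_{B_R}\tilde\phi_1=0$, and the leading-order term of $\tilde\phi$ is proportional to $\cos\varphi$ so both its integrals vanish. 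You instead integrate \eqref{PervoeProp} over $\Omega(V)$, apply the divergence theorem, and reduce to the identity $\int_{B_R}\tilde\phi_1+\int_{B_R}|\nabla(\Phi_V^0-x)|^2=k_0\pi R^2$. This is valid, but note two things. First, that identity is equivalent to the exact vanishing above and is precisely the Neumann compatibility condition for the $\tilde m_1$ problem \eqref{eq:adjoint_operator_generalized_eigenvector_PDE_1}, so it comes for free from the existence of $\tilde m_1$, which the paper already guarantees. Second, integrating \eqref{EscheOdno Uravnen} alone, as you propose, gives only one linear relation, namely $\frac{1}{m_0}\bigl(2\pi RB-\zeta\int_{B_R}\tilde\phi_1\bigr)+\int_{B_R}\tilde\phi_1=k_0\pi R^2-\int_{B_R}|\nabla(\Phi_V^0-x)|^2$, which does \emph{not} by itself imply the target; closing the argument requires plugging in the relation $k_0\pi R^2-\int_{B_R}|\nabla(\Phi_V^0-x)|^2=2\pi RB/\zeta$, which is exactly the solvability condition that produced \eqref{karavno} (rearranged, before it was rewritten through \eqref{formula_for_E}). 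You have the right structural picture when you say the $k_0$-solvability ``is essentially the integrated mass balance,'' but the last step should be made explicit rather than deferred to bookkeeping, or, more cleanly, replaced by the paper's direct observation that the constraint \eqref{ad_cond} is satisfied exactly on $B_R$ by construction.
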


\begin{proof} Bound \eqref{PervoeProp} follows from the construction of $\tilde \phi$ and asymptotic representation \eqref{expansion_of_Phi} for $\Phi-Vx$ in cojunction with the formula $\Lambda(V)=\Lambda(0)+O(V^2)$ (see Remark  \ref{rem:boundary}). To verify \eqref{VtoroeProp} one passes to polar 
coordinates and uses \eqref{priblizhBC} together with the bound \eqref{boundsforrho}. Finally, \eqref{TreteProp} follows from the construction of $\tilde\phi$
(recall that $\int_{\partial B_R} \partial_r \tilde \phi  \, ds -\zeta \int_{B_R}\tilde \phi \, dx dy=0$) and \eqref{boundsforrho}.
\end{proof}

\section{Small velocity asymptotic formulas for eigenvalues of the operator  linearized  around traveling wave solutions}
\label{section_lin_stab_tw_justify}

In this Section we justify asymptotic expansions constructed in Section \ref{section_lin_stab_tw_formal}. We begin with the generalized eigenvector of the adjoint 
operator $\mathcal{A}^\ast(V)$. Recall that $\mathcal{A}^\ast(V)$ has the eigenvector $W^\ast_1=(1,\Lambda(V)e^{\Phi-Vx})$ that 
is related to the total mayosin mass conservation property in  problem \eqref{actflow_in_terms_of_phi}--\eqref{myosin_bc_I}.

\begin{lem} \label{generLemma} The operator $\mathcal{A}^\ast(V)$ has a generalized eigenvector 
$W^\ast_2=(m^\ast_2,\rho^\ast_2)$,  $\mathcal{A}^\ast(V) W^\ast_2 
=W^\ast_1$, whose first component expands when $V\to 0$  as follows,
 \begin{equation}\label{singular}
	m_2^\ast=\frac {1}{k_0V+V^2k_1(V)}( \Phi_{V}^0-x+V\tilde m_1)+V f
	\end{equation}
with bounded $k_1(V)$ and $f(x,y,V)$ (in $C^j(\Omega(V))$ $\forall j\in \mathbb{Z}_+$),
while $\|\rho^\ast_2\|_{C^j(\Omega(V))}=O(1)$  $\forall j\in \mathbb{Z}_+$. The constant $k_0$ in \eqref{singular}
is given by \eqref{karavno}, $\Phi_{V}^0=\lim_{V\to 0}\partial_V\Phi$, and $m_1$ is a smooth function independent of  $V$ (and defined on $\mathbb{R}^2$).
\end{lem}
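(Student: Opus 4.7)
My plan is to promote the formal ansatz \eqref{eq:adjoint_operator_generalized_eigenvector_ansatz} from Section \ref{section_lin_stab_tw_formal} into an exact generalized eigenvector by a Fredholm-type correction, choosing the scalar $k_1(V)$ so as to absorb the single solvability obstruction.

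First, I would start with the pair $(\tilde m^{(0)},\tilde\rho^{(0)})=(\Phi_V^0-x+V\tilde m_1,\,V\tilde\rho_1)$ produced in Section \ref{section_lin_stab_tw_formal} from the function $\tilde\phi$ of \eqref{Priblizhennaya_tilde_phi}. Using the simplified form \eqref{myosin_adjBIS_tw}, \eqref{rho_adjBIS_BIS_tw} of $\mathcal A^\ast(V)$ together with the three estimates of Lemma \ref{muchitelnayalemma} (whose statement was engineered precisely for this step, with $k_0$ in \eqref{karavno} fixed so that the $O(V)$ obstruction is a multiple of $W_1^\ast$), I would derive the identity
\[
\mathcal A^\ast(V)(\tilde m^{(0)},\tilde\rho^{(0)}) = V k_0\, W_1^\ast + V^2 G(V),
\]
with $G(V)=(G_1,G_2)$ bounded in $C^j(\overline\Omega(V))\times C^j(\partial\Omega(V))$ uniformly in small $V$.

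Next, I would seek $W_2^\ast$ in the form
\[
W_2^\ast = \frac{1}{V k_0+V^2 k_1(V)}(\tilde m^{(0)},\tilde\rho^{(0)}) + V\hat W(V),
\]
so that $\mathcal A^\ast(V) W_2^\ast = W_1^\ast$, after multiplying through by $Vk_0+V^2 k_1$, becomes
\[
\mathcal A^\ast(V)\hat W(V) = \frac{k_1(V)}{k_0+Vk_1(V)}\,W_1^\ast-\frac{1}{k_0+Vk_1(V)}\,G(V).
\]
Since $\mathcal A^\ast(V)$ has compact resolvent (cf.\ Section \ref{section_lin_an_stst}) and $\ker\mathcal A(V)$ is one-dimensional for $V\ne 0$, spanned by $(m_1,\rho_1)$, Fredholm's alternative imposes a single scalar solvability condition, yielding a linear equation in $k_1(V)$. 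Its leading coefficient $\langle W_1^\ast,(m_1,\rho_1)\rangle_{L^2}$, which vanishes at $V=0$, is of order $V$ for small $V\ne 0$; this is balanced by the right-hand side $\langle G(V),(m_1,\rho_1)\rangle_{L^2}$, which is itself $O(V)$ by a direct parity computation. This determines $k_1(V)$ uniquely as a bounded function of $V$.

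The principal technical obstacle is the uniform-in-$V$ bound on $\hat W(V)$ as $V\to 0$: three eigenvalues of $\mathcal A^\ast(V)$ (the persistent zero eigenvalue of algebraic multiplicity two, together with $\lambda(V)$) collapse to zero at $V=0$, so the resolvent of $\mathcal A^\ast(V)$ is not uniformly bounded on the whole space. I would address this by decomposing via the Riesz spectral projector $P(V)$ onto the three-dimensional generalized eigenspace attached to these small eigenvalues and inverting $\mathcal A^\ast(V)$ on the complementary invariant subspace $(I-P(V))X$, on which the spectral gap together with analytic perturbation theory yields a resolvent bound uniform in $V$; the $P(V)X$-component of $\hat W$ is fixed by $L^2$-orthogonality to $(m_2,\rho_2)$ and the other elements of the generalized kernel of $\mathcal A(V)$. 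Elliptic regularity then upgrades the $L^2$ bound to the claimed $C^j$ bounds. Reading off the leading part from the $m$-component gives \eqref{singular}, with $Vf$ being the contribution of $V\hat W$, while the $O(1)$ bound on $\rho_2^\ast$ is automatic since $\tilde\rho^{(0)}=V\tilde\rho_1$ already carries a factor of $V$ that cancels the singular prefactor.
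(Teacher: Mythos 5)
Your proposal has a genuine gap in the step that determines $k_1(V)$. You claim the solvability condition from Fredholm's alternative reads as a linear equation in $k_1(V)$ whose leading coefficient $\langle W_1^\ast,(m_1,\rho_1)\rangle_{L^2}$ ``vanishes at $V=0$'' and ``is of order $V$ for small $V\neq 0$.'' This is false: the pairing vanishes \emph{identically} for every $V$. Indeed, with $m_1=-\Lambda(V)\partial_x e^{\Phi-Vx}$ and $\rho_1=\nu_x$ one has
\begin{equation*}
\langle W_1^\ast, W_1\rangle_{L^2}=-\Lambda\int_{\Omega(V)}\partial_x e^{\Phi-Vx}\,dx\,dy+\Lambda\int_{\partial\Omega(V)}e^{\Phi-Vx}\nu_x\,ds=0
\end{equation*}
by the divergence theorem; abstractly, $\langle W_1,W_1^\ast\rangle=\langle\mathcal A(V)W_2,W_1^\ast\rangle=\langle W_2,\mathcal A^\ast(V)W_1^\ast\rangle=0$. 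Moreover, pairing your approximate identity $\mathcal A^\ast(V)(\tilde m^{(0)},\tilde\rho^{(0)})=Vk_0W_1^\ast+V^2G(V)$ with $W_1\in\ker\mathcal A(V)$ shows $\langle G(V),W_1\rangle_{L^2}=0$ automatically. Hence your scalar solvability condition is vacuous and does not determine $k_1(V)$; the mechanism you propose collapses.

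A second, related difficulty is that even with the spectral projector $P(V)$ onto the three eigenvalues clustering at zero, you still have to solve $\mathcal A^\ast(V)\hat W=F$ with $F_P:=P(V)F$ possibly nonzero, and $\mathcal A^\ast(V)\bigl|_{P(V)X}$ has a Jordan block for $0$ and a third eigenvalue $\lambda(V)\to0$. Saying the $P(V)X$-component of $\hat W$ is ``fixed by orthogonality'' does not control its size; one would need to show that $F_P$ is itself small compared to $\lambda(V)$, which is not addressed. The paper's proof circumvents both obstacles by an entirely different route: it reduces the problem to the single boundary equation $\tilde{\mathcal L}(V)v=0$ on $\partial\Omega(V)$ for $v=\partial_\nu\tilde\phi$ (with $k$ and $\tilde\phi$ reconstructed from $v$ via \eqref{kandphi_via_partial_nu_phi}), exploits that $\tilde{\mathcal L}(V)$ has a simple isolated zero eigenvalue with no degeneration as $V\to0$ (so its resolvent is uniformly bounded on a circle $|\lambda|=\delta$), and then shows by the direct calculation \eqref{vychisleniya} — using precisely the constants $A,B$ of \eqref{defAB} — that the explicit candidate $\tilde v=-\zeta\nu_x+2AV\nu_x^2+V(B-A)$ satisfies $\tilde{\mathcal L}(V)\tilde v=O(V^2)$. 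The contour-integral bound is applied to $\tilde{\mathcal L}(V)$, not to $\mathcal A^\ast(V)$, which is exactly what makes the estimate uniform in $V$.
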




%

\begin{proof}
	 Consider the problem of finding generalized eigenvector in the form $\mathcal{A}^\ast(V)(\tilde m,\tilde \rho)=k(1,\Lambda(V) e^{\Phi -Vx})$ (with constant $k\not=0$), then
\begin{equation}
\label{myosin_adjBIS_tw_generalized}
\Delta\tilde \phi -\zeta\tilde\phi +\Lambda  e^{\Phi-Vx}\tilde \phi=k \Lambda  e^{\Phi-Vx}\quad \text{in}\ \Omega(V).
\end{equation}
Allowing $k=0$, which corresponds to the case of  zero eigenvalue, we can choose constant $k$ to satisfy condition \eqref{ad_cond}. 
Then the problem  
of finding generalized eigenvector reduces to the equation
\begin{equation}
\label{rho_adjBIS_tw_generalized}
-\partial^2_{\nu\nu}\Phi \tilde\phi-\partial_\tau((\partial_\tau \Phi+V\nu_y)\tilde \phi)
+(V\nu_x -\frac{\gamma\kappa^2}{\zeta})\partial_\nu\tilde\phi 
	-\frac{\gamma}{\zeta}
	\partial^2_{\tau\tau}\partial_\nu\tilde\phi -\frac{p_\ast^\prime(|\Omega(V)|)}{\zeta}\int_{\partial \Omega(V)}\partial_\nu\tilde\phi\, ds=k\Lambda e^{\Phi-Vx}
\end{equation}
for the only unknow function $\partial_\nu\tilde\phi$. Indeed, observe that  $k$ and  $\tilde\phi$  are given in terms of $\partial_\nu \tilde\phi$ by 
\begin{equation}
\label{kandphi_via_partial_nu_phi}
k=\frac{\int_{\partial \Omega}\partial_\nu \tilde \phi (1+\psi_1) \, ds} {\Lambda \int_ {\Omega} e^{\Phi-Vx}\psi_1\, dx dy}, \quad \tilde \phi =\tilde\psi+k\psi_2,
\end{equation}
where $\tilde \psi$, $\tilde \psi_1$ and $\tilde \psi_2$ are solutions of the equations 
\begin{equation}
\Delta\tilde \psi -\zeta\tilde\psi +\Lambda  e^{\Phi-Vx}\tilde \psi=0 \quad \text{in}\ \Omega(V)
\end{equation}
and
\begin{equation}
\Delta\psi_1 -\zeta\psi_1 +\Lambda  e^{\Phi-Vx} \psi_1=\zeta \quad \text{in}\ \Omega(V), \quad \Delta \psi_2 -\zeta\psi_2 +\Lambda  e^{\Phi-Vx} \psi_2=\Lambda  e^{\Phi-Vx}\quad \text{in}\ \Omega(V),
\end{equation}
subject to the Neumann boundary conditions 
$$
\partial_\nu \tilde \psi = v \quad\text{on}\  \partial\Omega(V),
$$
with $v=\partial_\nu\tilde\phi$, and $\partial_\nu \psi_i =0$, $i=1,2$, on $\partial\Omega(V)$. Thus \eqref{rho_adjBIS_tw_generalized} writes as
\begin{equation}
\label{RHO_adjBIS_tw_generalized}
-\frac{\gamma}{\zeta}
	\partial^2_{\tau\tau}v+(V\nu_x -\frac{\gamma\kappa^2}{\zeta})v
	-\partial^2_{\nu\nu}\Phi \tilde\psi-\partial_\tau((\partial_\tau \Phi+V\nu_y)\tilde \psi)+
\int_{\partial \Omega} Q(s,\tilde s, V)v(\tilde s)\, d\tilde s=0,
\end{equation}
where $Q$ is a smooth function and $Q=-p_\ast^\prime(\pi R^2)/\zeta-m_0/(\pi R^2 \zeta)+O(V)$ as $|V|\to 0$. Observe that \eqref{RHO_adjBIS_tw_generalized} is
a small perturbation of the equation 
$$
-\frac{\gamma}{\zeta}
	\partial^2_{\tau\tau}v-\frac{\gamma }{\zeta R^2}v-
\int_{\partial B_R} (p_\ast^\prime(\pi R^2)/\zeta+m_0/(\pi R^2 \zeta)) v(\tilde s)\, d\tilde s=0.
$$
In the case $p_\ast^\prime(\pi R^2)/\zeta+m_0/(\pi R^2 \zeta)+\gamma /(2\pi R^3\zeta)\not =0$ the latter equation has the only (even) 
solution $\cos \frac{\pi} {R} s$. On the other 
hand, since the multiplicity of zero eigenvalue of the operator ${\mathcal A}(V)$ is at least two ($\forall V$), and the same holds for ${\mathcal A}^\ast(V)$,  the equation \eqref{RHO_adjBIS_tw_generalized} always has at least one solution. 

Rewriting \eqref{RHO_adjBIS_tw_generalized} in the operator form $\tilde{\mathcal{L}}(V)v=0$ in $L^2(\partial\Omega(V))$, we have operator 
 $\tilde{\mathcal{L}}(V)$ with simple isolated eigenvalue $\lambda=0$. Then one can show that for some $\delta>0$ the norms $\| (\lambda -\tilde{\mathcal{L}}(V))^{-1}\|$  are uniformly bounded for complex $\lambda$ with $|\lambda|=\delta$ and sufficiently small $|V|$. Therefore, if $\tilde v$ is an approximation of the eigenfunction, we have
$$
\Pi_0\tilde v-\tilde v =\frac{1}{2\pi{\rm i}}\oint_{|\lambda|=\delta}(\lambda- \tilde{\mathcal{L}}(V))^{-1}\tilde{\mathcal{L}}(V)\tilde v \frac{ d\lambda}{\lambda}
$$
(to see this one takes integral of the identity $\frac{1}{\lambda}\tilde v= (\lambda -\tilde{\mathcal{L}}(V))^{-1}\tilde v-\frac{1}{\lambda}(\lambda -\tilde{\mathcal{L}}(V))^{-1}\tilde{\mathcal{L}}(V)\tilde v$), where $\Pi_0$ denotes the spectral projector on the null space of $\tilde{\mathcal{L}}(V)$. Thus
 $$
\|\Pi_0\tilde v-\tilde v \|_{L^2(\partial\Omega(V))}\leq C\|\tilde{\mathcal{L}}(V)\tilde v\|_{L^2(\partial\Omega(V))}
$$
and in a standard way, via bootstrapping, this bound yields  $\|\Pi_0\tilde v-\tilde v \|_{H^2(\partial\Omega(V))}\leq C\|\tilde{\mathcal{L}}(V)\tilde v\|_{L^2(\partial\Omega(V))}$.

Now consider $\tilde v:=-\zeta \nu_x+2AV\nu_x^2+V(B-A)$ (see \eqref{VtoroeProp}). Introducing the pair $(w,k(w))$ that solves 
\begin{equation*}
\Delta w-\zeta w+\Lambda  e^{\Phi-Vx}w=k(w) \Lambda  e^{\Phi-Vx}\quad \text{in}\ \Omega(V),\quad \partial_\nu w=\tilde v \ \text{on}\ \partial \Omega(V),
\end{equation*}
with the additional condition 
$
\int_{\partial \Omega(V)} \partial_\nu w\, ds=\zeta\int_{\Omega(V)} w \,dxdy
$, 
we get by virtue of  Lemma \ref{muchitelnayalemma} that $\|w-\tilde\phi\|_{C^j(\overline\Omega(V))}=O(V^2)$ $\forall j\in\mathbb{Z}_+$, $k(w)=k_0 V+O(V^2)$, 
where  $\tilde\phi$, $k_0$ are given by \eqref{Priblizhennaya_tilde_phi} and  \eqref{karavno}. Direct calculations show that   
$$
\tilde{\mathcal{L}}(V)\tilde v=O(V^2).
$$
Indeed, observe that $\kappa=\frac{1}{R}+O(V^2)$, $w=-m_0 R\nu_x+O(V)$ and $\partial_\tau (\nu_y w)= m_0 (1-2\nu_x^2)+O(V)$ on $\partial\Omega(V)$,
$$
\partial_{\tau\tau} \nu_x= -\nu_y\partial_\tau \kappa-\kappa^2\nu_x=-\frac{1}{R^2}\nu_x +O(V^2),
$$
$$ 
\partial_{\tau\tau} \nu_x^2= -4\kappa^2 \nu_x
+2\kappa^2-2\nu_x\nu_y \partial_\tau \kappa=\frac{2}{R^2}(1-2\nu_x^2) +O(V^2),
$$ 
also $-\partial^2_{\nu\nu}\Phi-\partial^2_{\tau\tau}\Phi=\kappa V \nu_x+\Lambda(V)e^{\Phi-Vx}-\zeta\Phi=\frac{1}{R} V\nu_x-m_0 V R\nu_x+O(V^2)$ (cf .\eqref{rho_adjBIS_BIS_tw}), then
\begin{multline}
\label{vychisleniya}
\tilde{\mathcal{L}}\tilde v=\left\{(m_0^2 R^2-m_0)+2m_0 -\zeta+\frac{6\gamma A}{\zeta R^2}\right\}V\nu_x^2
 \\
-\left\{m_0+\frac{3\gamma A}{\zeta R^2}+\frac{\gamma B}{\zeta R^2}-2\pi R \frac{p_\ast^\prime(\pi R^2)}{\zeta}B-m_0k_0
\right\}
V+O(V^2).
\end{multline}
Both the coefficient in front of $V\nu_x^2$ and the coefficient in front of $V$ in  \eqref{vychisleniya} vanish by virtue 
of formulas  \eqref{defAB} for constants $A$ and $B$. Thus we have $\| v-\tilde v \|_{H^2(\partial\Omega(V))}\leq C V^2$ for a properly normalized solution 
$v$ of \eqref{RHO_adjBIS_tw_generalized}. Finally, retrieving first  the number $k$ and the auxiliary function $\tilde \phi$ via \eqref{kandphi_via_partial_nu_phi}
for $v$ and $\tilde v$, then reconstructing $W_2^\ast$ and its approximation corresponding to $\tilde v$ one comletes the proof of Lemma \ref{generLemma} (details are left to the reader).
\end{proof}

Now we prove the key asymptotic formula 
\begin{equation}
\label{sovsemfinal_formula_for_lambda}
 \lambda(V)=-\frac{d E}{d M}\Bigl|_{M=M_0}\Bigr. V {M^\prime(V)}(1+O(V)) \quad \text{as}\ V\to 0, 
\end{equation}
where $M_0$ is the  critical value of the total myosin mass   of the stationary solution (corresponding to the critical  radius $R=R_0$).

\begin{thm}
\label{GlavnayaTh}
 Assume that  conditions of Theorem \ref{biftwtheorem} are satisfied and also that $M^\prime(V)\not=0$ for sufficiently small $|V|\not=0$. Then  the spectrum of the linearized operator $\mathcal{A}(V)$ (around the traveling wave solution) has the following structure near zero:
$\mathcal{A}(V)$ has  a small eigenvalue $\lambda(V)$ given by the asymptotic formula  
  \eqref{sovsemfinal_formula_for_lambda} in addition to the zero eigenvalue with multiplicity two whose eigenvector is given by 
\eqref{tw_shifts1}
 and the generalized eigenvector is given by
\eqref{TwderivativeinV1}.
\end{thm}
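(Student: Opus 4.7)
The strategy is to reduce the infinite-dimensional spectral problem to analysis of a three-dimensional invariant subspace and then identify the unique nontrivial eigenvalue via the quasi-mode constructed in Lemma \ref{TestFunctionLem}.

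\textbf{Step 1: Reduction to a three-dimensional invariant subspace.} At $V=0$ the operator $\mathcal{A}(0)$ coincides (after the trivial change of parametrization) with $\mathcal{A}_{\mathrm{ss}}$ at $R=R_0$. Since by hypothesis $F(R_0)=0$ (equivalently $E(R_0)=0$, see Remark \ref{eigenvalue_stationary}), Theorem \ref{thm_linearizeddisk} gives that $\lambda=0$ is an isolated eigenvalue of $\mathcal{A}_{\mathrm{ss}}$ of algebraic multiplicity three, with the rest of the spectrum bounded away from zero. Using a smooth change of variables to a fixed reference domain (say $B_{R_0}$) one sees that $V\mapsto \mathcal{A}(V)$ is an analytic family of operators with compact resolvent. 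Standard perturbation theory of isolated eigenvalues (Kato) then yields, for a small disk $D_\delta=\{|\lambda|<\delta\}$ with $\delta$ smaller than the distance from $0$ to the rest of $\sigma(\mathcal{A}(0))$, the existence of an analytic family of spectral projectors $P(V)=\frac{1}{2\pi \mathrm{i}}\oint_{|\lambda|=\delta}(\lambda-\mathcal{A}(V))^{-1}d\lambda$ of constant rank three, whose range is the direct sum of the generalized eigenspaces for eigenvalues of $\mathcal{A}(V)$ in $D_\delta$.

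\textbf{Step 2: Two zero eigenvalues are explicit.} For every $V$ the pair $W_1=(m_1,\rho_1)$ from \eqref{tw_shifts1} satisfies $\mathcal{A}(V)W_1=0$ (infinitesimal translation along the $x$-axis is a symmetry of the traveling-wave problem), and $W_2=(m_2,\rho_2)$ from \eqref{TwderivativeinV1} satisfies $\mathcal{A}(V)W_2=W_1$ (differentiate in $\tilde V$ the identity $\mathcal{A}(\tilde V)(\text{TW of velocity }\tilde V)=0$ and evaluate at $\tilde V=V$). Thus $\mathrm{span}\{W_1,W_2\}$ is a two-dimensional Jordan block of $\mathcal{A}(V)$ inside $P(V)H$, accounting for two of the three eigenvalues in $D_\delta$. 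The remaining eigenvalue, counted with algebraic multiplicity one, is the unique $\lambda(V)\in D_\delta\setminus\{\text{Jordan block}\}$; call its eigenvector $\Psi(V)$.

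\textbf{Step 3: Identification of $\lambda(V)$.} Apply Lemma \ref{TestFunctionLem} to get a quasi-mode $(\tilde W,\mu)$ with $\mu=\hat\lambda(V)V^2=-\frac{dE}{dM}|_{M_0}VM'(V)(1+O(V))$ and
\begin{equation*}
(\mathcal{A}(V)-\mu)\tilde W=R,\qquad \|R\|_{L^2}\le C|M'(V)|^2 V^3,\qquad \langle R,W_1^*\rangle_{L^2}=0.
\end{equation*}
Write $P(V)\tilde W=aW_1+bW_2+c\Psi(V)$ and project onto the 3D invariant subspace using the biorthogonal adjoint basis $\{W_1^*,W_2^*,\Psi^*(V)\}$, where $\Psi^*$ is the adjoint eigenvector for $\lambda(V)$. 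Pairing the identity $(\mathcal{A}(V)-\mu)P(V)\tilde W=P(V)R$ with $\Psi^*$ gives
\begin{equation*}
(\lambda(V)-\mu)\,c\,\langle\Psi,\Psi^*\rangle_{L^2}=\langle R,\Psi^*\rangle_{L^2},
\end{equation*}
since $(\mathcal{A}^*-\mu)\Psi^*=(\lambda-\mu)\Psi^*$ and $\Psi^*$ is biorthogonal to $W_1,W_2$. The expansion $\tilde W=W_1+O(V^2)$ of Lemma \ref{TestFunctionLem} together with the explicit formula for $W_1$ show that after normalizing $\Psi^*$ so that $\langle\Psi,\Psi^*\rangle=1$, the coefficient $c$ is bounded away from zero uniformly in small $V$ (both $\tilde W$ and $\Psi(V)$ tend, as $V\to 0$, to the shift direction in the 3D zero-generalized-eigenspace). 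Together with $\|R\|=O(|M'(V)|^2V^3)$ and $\|\Psi^*\|=O(1)$ (after the same normalization, to be verified by the construction sketched below), this gives $\lambda(V)-\mu=O(|M'(V)|^2V^3)$, which is $o(\mu)=o(|M'(V)|V)$, so $\lambda(V)=\mu(1+o(1))$ and \eqref{sovsemfinal_formula_for_lambda} follows.

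\textbf{Main obstacle.} The critical point is the normalization and control of the adjoint eigenvector $\Psi^*(V)$, which is not explicit. The adjoint spectral structure at $V=0$ involves a Jordan chain of length two (kernel $W_1^*$, generalized element $W_2^*$) plus a third direction corresponding to $\Psi^*(0)$ in the 3D adjoint zero-eigenspace. Lemma \ref{generLemma} produces $W_2^*$ with a singular $1/V$ prefactor, reflecting the fact that for $V\ne 0$ the algebraic multiplicity of zero drops from three to two. One must show that $\Psi^*(V)$ remains bounded (and non-degenerate in its pairing with $\tilde W$) as $V\to 0$, by separating it from the singular direction $W_2^*$; the orthogonality condition $\langle R,W_1^*\rangle=0$ built into Lemma \ref{TestFunctionLem} is exactly what is needed to kill the would-be $1/V$ contribution to $\langle R,\Psi^*\rangle$. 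Once this is in place, the asymptotics follow from the already-computed $\hat\lambda(V)V^2$.
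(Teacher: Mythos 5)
Your Steps 1 and 2 are correct and match the paper's strategy (isolated-eigenvalue perturbation theory plus the explicit two-dimensional Jordan block from shifts and the velocity derivative). The genuine gap is in Step 3, and it is not just a technicality you have postponed — it is the central obstacle that the paper's proof is built to avoid, and the remedy you sketch in the ``Main obstacle'' paragraph does not resolve it.

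\textbf{Where the argument breaks.} You normalize $\Psi^*(V)$ by $\langle\Psi,\Psi^*\rangle=1$ and claim $\|\Psi^*\|=O(1)$. This is false. The toy $3\times3$ model that the paper records (matrix $A(V)$ with direct eigenbasis $e_1,e_2,e_3$ and adjoint eigenbasis $e_1^*,e_2^*,e_3^*$) shows the structure: with $\Psi=e_3=(0,0,1)$ and adjoint eigenvector $e_3^*=(1,V,V^3)$ one has $\langle\Psi,e_3^*\rangle=V^3$, so the biorthogonally normalized $\Psi^*=(1/V^3,1/V^2,1)$ blows up like $V^{-3}$, much worse than the $V^{-1}$ singularity of $W_2^*$ recorded in Lemma \ref{generLemma}. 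Consequently your bound $|\langle R,\Psi^*\rangle|\le\|R\|\|\Psi^*\|$ gives $O(|M'(V)|^2V^3)\cdot O(V^{-3})=O(|M'(V)|^2)$, which in the generic case $M'(V)=O(V)$ is $O(V^2)$ — the same order as $\mu=\hat\lambda V^2$. The single orthogonality condition $\langle R,W_1^*\rangle=0$ only kills the $V^{-3}$ component of the pairing; a $V^{-2}$-singular component remains and the bound you obtain is strictly weaker than the one your Step 3 asserts. In addition, the claim that $c$ is bounded away from zero does not follow from $\tilde W\to W_1$ and $\Psi(V)\to W_1$ in $L^2$: since $\Psi^*$ is unbounded, $c=1+\langle\tilde W-\Psi,\Psi^*\rangle$ can pick up an $O(1)$ correction of unknown sign even if $\|\tilde W-\Psi\|$ is small. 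Nothing you have written controls this, and in fact this degeneracy of the biorthogonal pairing is exactly the phenomenon the paper calls out as the principal difficulty (``the eigenvector corresponding to $\lambda(V)$ merges asymptotically as $V\to0$ with the eigenvector $(m_1,\rho_1)$'').

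\textbf{How the paper avoids it.} Instead of expanding $P(V)\tilde W$ in a biorthogonal basis of the 3D generalized eigenspace, the paper decomposes $L^2\times L^2=\mathcal{I}(V)\oplus\mathrm{span}\{W_1,W_2\}$ with $\mathcal{I}(V)=\{W:\langle W,W_1^*\rangle=\langle W,W_2^*\rangle=0\}$. On $\mathcal{I}(V)$ the only small eigenvalue of $\mathcal{A}(V)$ is $\lambda(V)$, and it is simple. The quasi-mode is corrected to $W=\tilde W+\theta W_1\in\mathcal{I}(V)$ with $\theta=O(V)$ (this uses Lemma \ref{generLemma} to bound $\langle\tilde W,W_2^*\rangle$), and the eigenvalue estimate
\[
|\lambda(V)-\hat\lambda(V)V^2|\le\frac{\|(\mathcal{A}(V)-\hat\lambda(V)V^2)W\|}{\|\Pi_\delta(V)W\|}
\]
requires only a \emph{lower} bound on $\|\Pi_\delta(V)W\|$, which is obtained from the resolvent-convergence claim $\Pi_\delta(V)W\to W_1\neq0$ (proved via the a priori estimates of Lemma \ref{VspomogLemma}). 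This estimate never involves $\Psi^*$, its normalization, or the coefficient $c$, and so the near-degeneracy that sinks your Step 3 simply does not appear. If you want to pursue the biorthogonal route you would have to track the exact blow-up rate of $\Psi^*$ and show that the specific structure of $R$ produces compensating cancellations beyond the single orthogonality you invoke, which amounts to reproving a good deal of what Lemmas \ref{generLemma} and \ref{TestFunctionLem} already package.
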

\begin{rem} 
In generic case  (for almost all  values of the parameters $p_{\rm h}$, $k_{\rm e}$, $\zeta$, and $\gamma$) 
$M^{\prime \prime}(0) \not=0$.  Then   $M^\prime(V)\not=0$  is satisfied for small $V\not=0$ 
and formula  \eqref{sovsemfinal_formula_for_lambda} can be simplified as follows 
\begin{equation}
\label{formula_for_lambda_second}
\lambda(V)=-V^2\frac{d E}{d M}\Bigl|_{M=M_0}\Bigr.  M^{\prime \prime}(0)+O(V^3) \quad \text{as}\ V\to 0.
\end{equation}\label{two_formulas}	
	  \end{rem}	
\begin{rem} In Theorem \ref{GlavnayaTh} we  tacitly assume that operator $\mathcal{A}(V)$ is restricted to the subspace of  vectors that are symmetric with respect to the $x$-axis, while the general case without any symmetry restrictions on eigenvectors and generalized eigenvectors is considered in Section \ref{SectionNoSymmetry}, see Theorem~\ref{ThmNoSymmetry}.
\end{rem}
\begin{proof}
  Let $W_2^\ast$ be a generalized eigenvector of $\mathcal{A}^\ast
(V)$ corresponding to the eigenvector $W_1^\ast=(1,\Lambda
e^{\Phi-Vx})$, $\mathcal{A}^\ast (V)W_2^\ast=W_1^\ast$. The space
$L^2(\Omega(V))\times L^2(\partial \Omega(V))$ decomposes into the
direct sum of invariant subspaces
\begin{equation}
\label{invarsubspace} \mathcal{I}(V)=\{W\in L^2(\Omega(V))\times
L^2(\partial \Omega(V)); \langle W,W_1^\ast\rangle=\langle
W,W_1^\ast\rangle=0\}\oplus {\rm span}\{W_1, W_2\}
\end{equation}
of the operator $\mathcal{A}(V)$, where $W_1$, $W_2$ denote the
pair of the eigenvector of $\mathcal{A}(V)$ corresponding to the
zero eigenvalue and a generalized eigenvector. This induces also
the decomposition of the domain $D(\mathcal{A}
(V))=H^2(\Omega(V))\times H^{3}(\partial\Omega(V))$ into the sum
$D(\mathcal{A} (V))=D(\mathcal{A}(V))\cap \mathcal{I}(V)\oplus
{\rm span}\{W_1, W_2\}$.


Fix a sufficiently small $\delta>0$
such that $\mathcal{A}_{ss}$ does not have eigenvalues $\lambda$
with $0<|\lambda|\leq 2 \delta$. Then we claim that for sufficiently
small $V$ the operator $(\lambda- {\mathcal{A}} (V))^{-1}$
exists and is uniformly bounded on $\delta/2\leq |\lambda|\leq
2\delta$. Indeed, assume by contradiction that for a sequence
$V_j\to 0$ $\exists W_j\in D(\mathcal{A} (V_j))\cap
\mathcal{I}(V_j)$, $W_j=(m_j,\rho_j)$, with
$\|m_j\|_{L^2(\Omega(V_j))}^2+\|\rho_j\|_{L^2(\partial\Omega(V_j))}^2=1$,
such that norms of $U_j=(\lambda_j- {\mathcal{A}} (V_j))W_j$
in $L^2(\Omega(V_j))\times L^2(\partial \Omega(V_j))$ tend to zero as
$j\to \infty$. We use the following  lemma which provides a priori estimates 
implying  that norms
$\|m_j\|_{H^2(\Omega(V_j))}$ and
$\|\rho_j\|_{H^{3}_{\partial\Omega(V_j)}}$ are uniformly
bounded.
\begin{lem} 
\label{VspomogLemma}
There exists $K=K(\overline V)>0$ such that for all $V$ with $|V|<\overline V$  every pair
$(m,\rho)$ solving
\begin{equation}
	\label{apriori_Hact_flow_lin_op}
	\Delta \phi + m=\zeta \phi \quad\text{in}\ \Omega(V),
	\end{equation}
	\begin{equation}
	\label{apriori_Hact_flow_BC_dir_op}
	\zeta(\phi+V\nu_x \rho) =p_{\ast}^\prime(|\Omega(V)|)\int_{\partial\Omega(V)}
	\rho(s)ds
	+\gamma(\rho^{\prime\prime}+\kappa^2 \rho)\quad\text{on}\ \partial\Omega(V),
	\end{equation}
	\begin{equation}
	\label{apriori_Hact_flow_sec_BC}
	\varrho+K\rho=\frac{\partial \phi}{\partial \nu}+\frac{\partial^2 \Phi}{\partial \nu^2}\rho-\left(\frac{\partial \Phi}{\partial \tau}+V \nu_y\right)\rho^\prime\quad\text{on}\ \partial\Omega(V),
	\end{equation}
\begin{equation}
	\label{apriori_Hmyosin_lin_eq}
	f+K m=\Delta {m}+V\partial_x m -\div( \tilde \Lambda e^{\Phi-Vx} \nabla {\phi})
	-\div(m\nabla \Phi) \quad\text{in}\ \Omega(V).
	\end{equation}
\begin{equation}
	\label{apriori_Hmyosin_lin_BC}
	\partial_\nu m+\tilde\Lambda e^{\Phi-Vx}\left(\frac{\partial^2 \Phi}{\partial \nu^2}\rho -\Bigl(\frac{\partial \Phi}{\partial \tau}+V \nu_y\Bigr)\rho^\prime\right)=0
	\quad \text{on}\ \partial \Omega(V).
\end{equation}
satisfies the bound
\begin{equation}
\label{apriori_est}
\|\rho\|_{H^{3}(\partial\Omega(V))}+\|m\|_{H^2(\Omega(V))}\leq C(\|\varrho\|_{L^2(\Omega(V))}+\|f\|_{L^2(\Omega(V))})
\end{equation}
\end{lem}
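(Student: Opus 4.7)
The plan is to view the system \eqref{apriori_Hact_flow_lin_op}--\eqref{apriori_Hmyosin_lin_BC} as an elliptic boundary value problem of mixed order on $\Omega(V)\times\partial\Omega(V)$, and to derive \eqref{apriori_est} from classical regularity theory for the two constituent elliptic problems combined with an $L^2$-coercivity statement that holds for $K$ large.

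First I would eliminate $\phi$: for given $(m,\rho)$, the function $\phi$ is uniquely determined by the Dirichlet-type problem \eqref{apriori_Hact_flow_lin_op}--\eqref{apriori_Hact_flow_BC_dir_op}, and splitting $\phi=\phi_m+\phi_\rho$ by linearity (setting the boundary data or the interior source to zero, respectively), classical elliptic theory gives $\|\phi_m\|_{H^2(\Omega(V))}\le C\|m\|_{L^2}$ uniformly in $V$ by the smooth $V$-dependence of $\Omega(V)$ from Theorem~\ref{biftwtheorem}. The key structural observation is that the boundary data for $\phi_\rho$ is dominated by $\gamma\rho''/\zeta$, so the boundary map $\rho\mapsto -\partial_\nu\phi_\rho|_{\partial\Omega(V)}$ has principal symbol equal to that of $(\gamma/\zeta)\Lambda_{\mathrm{DtN}}\circ(-\partial_{\tau\tau}^2)$, a positive elliptic pseudodifferential operator of order $3$ on $\partial\Omega(V)$. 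Substituting this into \eqref{apriori_Hact_flow_sec_BC} converts the $\rho$-equation into a third-order elliptic equation on $\partial\Omega(V)$ of the form $(K+\mathcal{L}_V)\rho+(\text{lower order in }\rho)=\varrho+(\text{terms controlled by }m)$, and standard pseudodifferential elliptic regularity yields
\[
\|\rho\|_{H^3(\partial\Omega(V))}\le C\bigl(\|\varrho\|_{L^2(\partial\Omega(V))}+\|m\|_{L^2(\Omega(V))}+\|\rho\|_{L^2}\bigr),
\]
where the $m$-term arises from $\|\partial_\nu\phi_m\|_{H^{1/2}(\partial\Omega)}\le C\|m\|_{L^2}$. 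For the $m$-equation, expanding $\mathrm{div}(\tilde\Lambda e^{\Phi-Vx}\nabla\phi)$ via $\Delta\phi=\zeta\phi-m$ produces a second-order elliptic Neumann problem for $m$ with leading part $K-\Delta$ and couplings controlled by $\|\phi\|_{H^1}\le C(\|m\|_{L^2}+\|\rho\|_{H^3})$; standard $H^2$-regularity then gives
\[
\|m\|_{H^2(\Omega(V))}\le C\bigl(\|f\|_{L^2}+\|m\|_{L^2}+\|\rho\|_{H^3}\bigr).
\]

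To close these two estimates, the final step is to establish $L^2$-coercivity: for $K$ sufficiently large, testing the $m$-equation against $m$ and the $\rho$-equation against $\rho$, integrating by parts on $\Omega(V)$ and $\partial\Omega(V)$ respectively, and exploiting the positivity of $\mathcal{L}_V$ on the boundary together with the $(K-\Delta)$ structure in the interior, yields $\|m\|_{L^2}+\|\rho\|_{L^2}\le C(\|f\|_{L^2}+\|\varrho\|_{L^2})$, which combined with the two preceding inequalities immediately gives \eqref{apriori_est}. The main obstacle is ensuring the uniformity of all constants in $V$; this is handled by carefully tracking how both the Dirichlet-to-Neumann operator and the $H^2$-regularity constants depend on $\Omega(V)$, using the smooth dependence of the domain on $V$ from Theorem~\ref{biftwtheorem} to guarantee that the principal symbols of all pseudodifferential operators involved remain uniformly elliptic for $|V|<\overline V$.
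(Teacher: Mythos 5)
Your proposal is essentially correct and gives the same final a priori bound, but it takes a genuinely different route from the paper. The paper avoids pseudodifferential calculus entirely: the $H^{3/2}$ control of $\rho$ is obtained by multiplying \eqref{apriori_Hact_flow_lin_op} by the \emph{harmonic extension} $\mathcal{H}(\rho)$ of $\rho$, integrating by parts twice, substituting the boundary conditions \eqref{apriori_Hact_flow_BC_dir_op}--\eqref{apriori_Hact_flow_sec_BC} to eliminate $\phi$ and $\partial_\nu\phi$, and then verifying \emph{by hand}, via the Frenet--Serret formulas and the harmonicity of $\mathcal H(\rho)$, that the resulting quadratic form $-\int_{\partial\Omega}\rho''\,\partial_\nu\mathcal H(\rho)\,ds$ equals $\tfrac12\int_\Omega|\nabla^2\mathcal H(\rho)|^2 + \tfrac12\int_{\partial\Omega}\kappa|\nabla\mathcal H(\rho)|^2$, hence is an $H^{3/2}$ seminorm when $\kappa\ge 0$. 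The $L^2$ bound for $\phi$ is likewise extracted by an explicit construction (a splitting $\phi=\tfrac{\gamma}{\zeta}\mathcal H(\rho'')+G$, and a bound on $\mathcal H(\rho'')$ via harmonic conjugates). You instead package the same coercivity as the statement that $\rho\mapsto-\partial_\nu\phi_\rho$ is a positive elliptic $\Psi$DO of order $3$ with principal symbol $(\gamma/\zeta)|\xi|^3$, and invoke $\Psi$DO elliptic regularity plus a G\aa rding inequality; the two are of course the same fact, but the paper's version keeps all constants explicit and tied to the geometry ($\kappa>0$), whereas yours must additionally verify that ellipticity and G\aa rding constants are uniform over the family of $V$-dependent boundaries --- which you correctly flag as the main burden but treat by citation rather than calculation. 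One imprecision worth noting: you describe the energy step as ``$L^2$-coercivity,'' but what the testing actually produces (and what is needed to close the loop against the Neumann data in \eqref{apriori_Hmyosin_lin_BC}, which involves $\rho'$ and the trace of $m$) is $H^{3/2}(\partial\Omega)$ control of $\rho$ and $H^1(\Omega)$ control of $m$; the pure $L^2$ statement you write is a consequence, not the driver. With that adjustment and the uniformity argument carried out, your plan closes, and it buys a somewhat more conceptual view of why the estimate holds at the cost of a heavier toolkit.
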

\begin{proof} Without loss of generality we can assume that $\rho$ and $m$ are sufficiently smooth. Also, for brevity we suppress  hereafter the 
dependence of the domain $\Omega$ on $V$.

The key a priori bound is obtained  multiplying the equation \eqref{apriori_Hact_flow_lin_op} by the harmonic extension $\mathcal{H}(\rho)$ of $\rho$ from $\partial \Omega$ into $\Omega$ 
($\Delta  \mathcal{H}(\rho)=0$ in $\Omega$, and $\mathcal{H}(\rho)=\rho$ on $\partial \Omega$) which yields, after integrating by parts  twice and eliminating 
$\phi$, $\partial_\nu \phi$ from the integrals over the boundary  with the help  
of \eqref{apriori_Hact_flow_BC_dir_op} and \eqref{apriori_Hact_flow_sec_BC},
\begin{multline}
\label{Raz}
\frac{K}{2}\int_{\partial \Omega} \rho^2 \, ds-\frac{\gamma}{\zeta}\int_{\partial \Omega} \rho^{\prime\prime} \partial_\nu \mathcal{H}(\rho)\, ds=-\frac{K}{2}\int_{\partial \Omega} \rho^2 \, ds+\int (\zeta \phi -m)\mathcal{H}(\rho)\, dx dy \\
+ \int_{\partial \Omega} \left(\Bigl(\frac{\gamma\kappa^2}{\zeta}-V\nu_x\Bigr)\rho\partial_\nu  \mathcal{H}(\rho)
+\frac{\partial^2 \Phi}{\partial \nu^2}\rho^2-\left(\frac{\partial \Phi}{\partial \tau}+V \nu_y\right)\rho^\prime\rho-\rho\varrho \right)ds.
\end{multline}
Next observe that the left hand side of \eqref{Raz} represents (square of) a norm in $H^{3/2}(\partial \Omega)$ when $K>0$ is big enough. Actually, the second term solely defines a seminorm in $H^{3/2}(\partial \Omega_{\rm tw})$ if $\kappa\geq 0$. Indeed, using the Frenet-Serret formulas $\partial_\tau \nu_x=\kappa \tau_x=-\kappa \nu_y$,
$\partial_\tau \nu_y=\kappa \tau_y=\kappa \nu_x$ and the fact that $\Delta \mathcal{H}(\rho)=0$ we find
\begin{equation}
\begin{aligned}
\label{identity1}
-\rho^{\prime\prime} \partial_\nu \mathcal{H}(\rho)=-\partial_\tau(\partial_\tau  \mathcal{H}(\rho)) \partial_\nu \mathcal{H}(\rho)=&\kappa(\partial_\nu \mathcal{H}(\rho))^2\\
&+
\partial_\nu \mathcal{H}(\rho)\left(\nu_x^2\partial_{xx}^2 \mathcal{H}(\rho)+2\nu_x\nu_y\partial_{xy}^2 \mathcal{H}(\rho)+\nu_y^2\partial_{yy}^2 \mathcal{H}(\rho)\right),
\end{aligned}
\end{equation}
\begin{equation}
\begin{aligned}
\label{identity2}
\partial_\tau  \mathcal{H}(\rho)) \partial_\tau(\partial_\nu \mathcal{H}(\rho))=&\kappa(\partial_\tau \mathcal{H}(\rho))^2+\nabla  \mathcal{H}(\rho)\cdot\partial_\nu \nabla  \mathcal{H}(\rho)
\\
&-\partial_\nu \mathcal{H}(\rho)\left(\nu_x^2\partial_{xx}^2 \mathcal{H}(\rho)+2\nu_x\nu_y\partial_{xy}^2 \mathcal{H}(\rho)+\nu_y^2\partial_{yy}^2 \mathcal{H}(\rho)\right).
\end{aligned}
\end{equation}
Then taking the half-sum of these identities and integrating over $\partial\Omega$ we obtain, using integration by parts  and the fact that  $\Delta \mathcal{H}(\rho)=0$,
\begin{equation}
\label{identity3}
\begin{aligned}
-\int_{\partial\Omega}\rho^{\prime\prime} \partial_\nu \mathcal{H}(\rho) \, ds=&\frac{1}{2}\int_{\partial\Omega}\kappa |\nabla  \mathcal{H}(\rho)|^2\, ds
+\frac{1}{2}\int_{\partial\Omega}\nabla  \mathcal{H}(\rho)\cdot\partial_\nu \nabla  \mathcal{H}(\rho)\, ds\\ &=
\frac{1}{2}\int_{\partial\Omega}\kappa |\nabla  \mathcal{H}(\rho)|^2\, ds
+\frac{1}{2}\int_{\Omega}|\nabla^2  \mathcal{H}(\rho)|^2 \, dx dy.
\end{aligned}
\end{equation}
Thus \eqref{Raz} yields the following bound
\begin{multline}
\label{Dva}
\|\rho\|_{H^{3/2}(\partial \Omega)}^2\leq -\theta K\|\rho\|_{L^2(\partial\Omega)}^2\\
+C_1\left( \|\varrho\|_{L^2(\partial \Omega)}^2+\left(\frac{1}\ve+1\right)\|\rho\|_{L^2(\partial\Omega)}^2 +\ve \|\phi\|_{L^2(\Omega)}^2+
\|m\|_{L^2( \Omega)}^2\right)
\end{multline}
where $\theta>0$ is independent of $K$, while $\varepsilon>0$ is an arbitrary number (and $C_1$ does not depend on $\ve$).

To derive a bound for $L^2$-norm of $\phi$ represent this function as $\phi =\frac{\gamma}{\zeta}\mathcal{H}(\rho^{\prime\prime})+G$, where $G$ is the solution of
\begin{equation}
\label{Dlinnoe_no_prostoe}
\Delta G=\zeta G+\gamma \mathcal{H}(\rho^{\prime\prime})-m
\end{equation}
\begin{equation}
\label{Dlinnoe_no_prostoe_BC}
\zeta (G +V\nu_x \rho) =p_{\ast}^\prime(|\Omega_{\rm tw}|)\int_{\partial \Omega(V)}
	\rho(s)ds
	+\gamma \kappa^2 \rho\quad\text{on}\ \partial \Omega(V).
\end{equation}
Assume for a moment that a bound for $\|\mathcal{H}(\rho^{\prime\prime})\|_{L^2}$ is known, then by elliptic estimates we have
\begin{equation}
\label{otsenkaAPRIoR}
\|G\|_{L^2(\Omega)}\leq C(\|\rho\|_{H^1(\partial \Omega)}+\|\mathcal{H}(\rho^{\prime\prime})\|_{L^2(\Omega)}+\|m\|_{L^2(\Omega)}).
\end{equation}
We proceed with derivation of a bound for
$\|\mathcal{H}(\rho^{\prime\prime})\|_{L^2(\Omega)}$. To this end consider the solution of the Dirichlet problem $\Delta g=\mathcal{H}(\rho^{\prime\prime})$ in $\Omega$, $g=0$ on $\partial \Omega$, along with the functions $\mathcal{H}(\partial_\nu g)$, $\mathcal{H}(\rho^{\prime})$ and its harmonic conjugate $\mathcal{H}^\ast(\rho^{\prime})$ (such that $\partial_\nu \mathcal{H}^\ast(\rho^{\prime})=-\partial_\tau \mathcal{H}(\rho^{\prime})=-\rho^{\prime\prime}$). We have 
\begin{multline}
\label{pokostyam}
\int_{\Omega} |\mathcal{H}(\rho^{\prime\prime})|^2\, dxdy=\int_{\Omega} \mathcal{H}(\rho^{\prime\prime})\Delta g\, dxdy\\= \int_{\partial \Omega} \rho^{\prime\prime}\partial_\nu g\, ds=- \int_{\partial \Omega}  \partial_\nu \mathcal{H}^\ast(\rho^{\prime}) \mathcal{H}(\partial_\nu g)\, ds=-
\int_{\Omega}\nabla \mathcal{H}^\ast(\rho^{\prime})\cdot \nabla \mathcal{H}(\partial_\nu g)\,dxdy,
\end{multline}
while by elliptic estimates
$$
\int_{\Omega} |\nabla \mathcal{H}(\partial_\nu g)|^2\, dxdy\leq C_2 \| \partial_\nu g\|_{H^{1/2}(\partial \Omega)}\leq C_3 \|g\|_{H^2(\Omega)}\leq C_4 \int_{\Omega} |\mathcal{H}(\rho^{\prime\prime})|^2\, dxdy,
$$
$$
\int_{\Omega} |\nabla \mathcal{H}^\ast(\rho^{\prime})|^2\, dxdy=\int_{\Omega} |\nabla \mathcal{H}(\rho^{\prime})|^2\, dxdy\leq  C_2 \| \rho^\prime\|_{H^{1/2}(\partial \Omega)}
\leq  C_5  \| \rho\|_{H^{3/2}(\partial \Omega)}.
$$
Thus $\|\mathcal{H}(\rho^{\prime\prime})\|_{L^2(\Omega)}\leq C \| \rho\|_{H^{3/2}(\partial \Omega)}$,  and in view of  \eqref{otsenkaAPRIoR} we have
\begin{equation}
\label{nakonetsotsenili_phi}
\|\phi\|_{L^2(\Omega)}^2\leq C_6 \left(\|\rho\|_{H^{3/2}(\partial \Omega)}^2+\|m\|_{L^2(\Omega)}^2\right).
\end{equation}
Now choose $\ve:=\frac{1}{2C_1 C_6}$ in \eqref{Dva}, then for $K\geq K_1=\frac{C_1}{\theta}(\frac{1}{\ve}+1)$ the following bounds hold,
\begin{equation}
\label{nakonetsotsenili_phi_rho}
\|\rho\|_{H^{3/2}(\partial \Omega)}^2 \leq C_7  \left(\|\varrho\|_{L^2(\partial \Omega)}^2+\|m\|_{L^2(\Omega)}^2\right),
\quad 
\|\phi\|_{L^2(\Omega)}^2\leq C_8 \left(\|\varrho\|_{L^2(\partial \Omega)}^2+\|m\|_{L^2(\Omega)}^2\right).
\end{equation}

It remains to find a bound for $m$. To this end multiply \eqref{apriori_Hmyosin_lin_eq} by $m$ and integrate over $\Omega$.  Using \eqref{apriori_Hmyosin_lin_BC}, \eqref{apriori_Hact_flow_lin_op} and the fact that $\partial_\nu(\Phi-Vx) =0$ on $\partial \Omega$, we find
\begin{multline}
	\label{VychislenIYA}
	K \int_\Omega m^2 \, dx dy+\int_\Omega |\nabla m|^2 \, dx dy=-\int_\Omega fm \, dx dy+\Lambda\int_\Omega \phi \div(m \nabla   e^{\Phi-Vx})\, dx dy \\
+\int_\Omega\left(V\partial_x m-\nabla m\cdot \nabla \Phi -m\Delta\Phi + \Lambda e^{\Phi-Vx} (m-\zeta\phi)\right)m\, dxdy\\
	 -\int_{\partial \Omega}
\left\{\tilde\Lambda e^{\Phi-Vx}\left(\frac{\partial^2 \Phi}{\partial \nu^2}\rho -\Bigl(\frac{\partial \Phi}{\partial \tau}+V \nu_y\Bigr)\rho^\prime\right)\right\} m\, ds.
	\end{multline}
We can estimate the right hand side of \eqref{VychislenIYA} with the help of the Cauchy–Schwarz inequality, bounds \eqref{nakonetsotsenili_phi_rho}, 
and the inequality for traces $  \int_{\partial \Omega} |m|^2 ds\leq C \int_\Omega( |\nabla m|^2 +m^2) \, dx dy$, as the result we get
\begin{multline}
\label{nu_vse}
K \int_\Omega m^2 \, dx dy+\frac{1}{2}\int_\Omega |\nabla m|^2 \, dx dy \leq  C_9\left( \int_{\Omega} |m|^2 \, dxdy+\int_{\Omega} |f|^2 \, dxdy+\|\varrho\|_{L^2(\partial \Omega)}^2 \right).
\end{multline}
Thus for $K\geq C_9$ (and $K\geq K_1$) we have obtained a bound for $H^1$-norm of $m$ in terms of $L^2$-norms of $\varrho$ and $f$, this in turn yields bounds 
for $\|\rho\|_{H^{3/2}(\partial \Omega)}$ and $\|\phi\|_{L^2(\Omega)}$. Therefore we can obtain  bounds for the norm of $\phi$ in $H^{3/2}(\Omega)$ and for the norm of $m$ in 
$H^2(\Omega)$ via elliptic estimates applied to problems \eqref{apriori_Hact_flow_lin_op}, \eqref{apriori_Hact_flow_sec_BC} and 
\eqref{apriori_Hmyosin_lin_eq}-\eqref{apriori_Hmyosin_lin_BC}. Finally, since we have a bound for $\phi$ in $H^{1/2}(\partial \Omega)$ (which follows from the bound for $\phi$ in $H^{3/2}(\Omega)$) equation \eqref{apriori_Hact_flow_BC_dir_op} yields a bound for
 $\|\rho\|_{H^3(\partial\Omega)}$. Lemma \ref{VspomogLemma} is proved.

\end{proof}

{\it Proof of  Theorem  \ref{GlavnayaTh} (continued).} Writing the equation $U_j=({\mathcal{A}-\lambda_j} (V_j))W_j$  as 
${\mathcal{A}} (V_j))W_j +KW_j =(\lambda_j+K)W_j +U_j$ and applying Lemma \ref{VspomogLemma} we see that norms $\|m_j\|_{H^2(\Omega(V))}$ and 
$\|\rho_j\|_{H^{3}(\partial\Omega(V))}$ are uniformly bounded. Therefore there exists $\lambda$ with $\delta/2\leq|\lambda|\leq 2\delta$, a function $\phi\in H^{3/2}(B_R)$ and nontrivial pair $(m,\rho)\in H^2(B_R) \times H^{5/2}(\partial B_R)$ such that, up to a subsequence, $\lambda_j\to \lambda$, $\left(\rho_j\left(Rs/{L(V_j)}\right),\phi_j\left(x\left({R}s/{L(V_j)}\right), y\left({R}s/{L(V_j)}\right)\right)\right)\to (\rho(s),\phi(x(s), y(s))$ weakly in $H^3(\partial B_R)\times H^{1}(\partial B_R)$ (where $L(V_j)$ denotes the length  of $\partial \Omega(V_j)$) and 
$m_j\to m$, $\phi_j\to \phi$ strongly in $H^1$ on every compact subset of $B_R$. Then passing  to the limit in (variational fomulation of)  problem \eqref{Hmyosin_lin_eq}-\eqref{Hmyosin_lin_BC}, with a smooth test function  $v(x,y)$, we find 
\begin{equation}
	\lambda \int m v dx dy =-\int \nabla m\cdot \nabla v dxdy+m_0 \int (m-\zeta \phi)v dx dy,
\end{equation}
where we have used \eqref{Hact_flow_lin_op} to eliminate $\Delta \phi_j$. Thus $m\in H^2(B_R)$ and $m$ satisfies $\lambda m= \Delta m +m_0(m-\zeta \phi)$ in $B_R$ along with the boundary condition
$\partial_\nu m=0$ on $\partial B_R$.  Passing to the limit in \eqref{Hact_flow_lin_op} with test functions from $C_0^\infty(B_R)$ we get $\Delta \phi=\zeta\phi-m$
in $B_R$, thus the equation for $m$ rewrites as $\lambda m= \Delta m -m_0\Delta \phi$ in $B_R$. Also, taking limit in \eqref{Hact_flow_BC_dir_op} yields 
$\zeta\phi=p_{\ast}^\prime(|B_R|)\int_{\partial B_R}
	\rho(s)ds
	+\gamma(\rho^{\prime\prime}+\frac{1}{R^2} \rho)$ on $\partial B_R$. Finally, using a  smooth test function $v(x,y)$ in variational formulation of equation
\eqref{Hact_flow_lin_op} with boundary condition 
\eqref{Hact_flow_sec_BC} we obtain 
\begin{multline}
0=-\int_{\Omega(V_j)} \nabla\phi_j\cdot \nabla v\, dxdy+ \int_{\Omega(V_j)}(m_j -\zeta \phi_j)v dxdy+\int_{\partial \Omega(V_j)} \lambda_j \rho_j v ds +o(1)\\
=-\int_{B_R} \nabla\phi\cdot \nabla v\, dxdy+ \int_{B_R} (m -\zeta \phi)v dxdy+\int_{\partial {B_R}} \lambda \rho v ds +o(1),
	\end{multline}
implying that $\lambda\rho=\partial_r \phi$ on $\partial B_R$. Thus $\lambda$ is an eigenvalue of the operator $\mathcal{A}_{ss}$, contradicting the assumption.
Repeating this reasoning for $\delta/4$ in place of $\delta$, $\delta/16$ etc. we conclude that all eigenvalues $\lambda$ of ${\mathcal{A}} (V))$ with $|\lambda|<2\delta$ necessarily  converge to zero as $V\to 0$. 

To establish convergence of eigenvalues with multiplicities, consider for sufficiently small $V$ spectral projectors on the invariant subspace
 spanned by eigenvectors and generalized eigenvectors corresponding to eigenvalues $\lambda$ with $|\lambda|<\delta$,
\begin{equation}
\Pi_\delta(V):=\frac{1}{2\pi {\rm i}}\oint_{|\lambda|=\delta}(\lambda-\mathcal{A}(V))^{-1}\, d\lambda,
\end{equation}
restricted to $\mathcal{I}(V)$. Show that $\Pi_\delta(V)\bigl|_{\mathcal{I}(V)}\bigr.$ converges (in the sence desribed below)  to 
$$
\Pi_\delta(0) W=\frac{W_1}{\langle W_1, W_3^\ast\rangle}\Bigl|_{V=0}\Bigr.\langle W,W_3^\ast \rangle
$$
as $V\to 0$, where $W_3^\ast$ is a generalized eigenvector of ${\mathcal{A}}^\ast_{ss}$ corresponding to $W_2^\ast$,  i.e.  
${\mathcal{A}}^\ast_{ss}W_3^\ast=W_2^\ast$.
Namely, we claim that for any sequence $V_j\to 0$ and $(m_j,\rho_j)\in \mathcal{I}(V_j)$ such that $\rho_j\left(Rs/{L(V_j)}\right)\to \rho$ in $L^2(\partial B_R)$, and
$m_j\to m$ in $L^2(\mathbb{R}^2)$ (where we assume $m_j$ and $m$ continued by zero in $\mathbb{R}^2\setminus \Omega(V_j)$ and $\mathbb{R}^2\setminus B_R$, correspondingly) the sequence of pairs $(f_j,\varrho_j):=\Pi_\delta(V_j))(m_j,\rho_j)$ converges to $\Pi_\delta(0)(m,\rho)$ weakly in $H^2(B_R)\times H^3(\partial B_R)$, more precisely this convergence holds for functions $f_j$ exstended to $B_R$ (if necessary) by standard reflection through the normal  and 
$\varrho_j\left(Rs/{L(V_j)}\right)$. The proof of this claim follows exactly the lines above: we use Lemma \ref{VspomogLemma} to get uniform a priori bounds for  
$(\lambda-\mathcal{A}(V_j))^{-1})(m_j,\rho_j)$ in $H^2(\Omega (V_j))\times H^3(\partial \Omega (V_j))$ and then pass to limit in variational formulations of correspoding problems with smooth test functions. It follows that for suffciently small $V$ the dimension of the space $\Pi_\delta(V) {\mathcal{I}(V)}$ is at most one. Indeed, otherwise there exists a sequence $V_j\to 0$ and elements $W_j,\tilde W_j$ of $\Pi_\delta(V_j)\mathcal{I}(V_j)$ that are orthogonal and normalized to one in $L^2(\Omega(V_j))\times L^2(\partial\Omega(V_j))$. Since $W_j=\Pi_\delta(V_j)W_j$ and $\tilde W_j=\Pi_\delta(V_j)\tilde W_j$, after extracting a subsequence, if necessary, both $W_j$ and $\tilde W_j$ converge strongly in $L^2$-topology to limits belonging to ${\rm span} \{W_1\bigl|_{V=0}\}$, a contradiction. Furthernore,  we construct below
\begin{equation}
\label{approximateEV}
W=\tilde W + \theta W_1 \in  {\mathcal{I}(V)}\quad \text{with}\  \theta =O(V),
\end{equation}
out of the vectors $\tilde W$ from Lemma \ref{TestFunctionLem}, then we have
\begin{equation}
\label{shoditsya}
\Pi_\delta (V)W\mathop{\longrightarrow}_{V\to 0} \Pi_\delta(0) W_1=W_1\not =0.
\end{equation}
Therefore $\mathcal A(V)\bigl|_{\mathcal{I}(V)}$ has for sufficiently small $V$ exactly one simple eigenvalue $\lambda(V)$ with $|\lambda(V)|\leq 2\delta$, and 
$\lambda(V)\to 0$ as $V\to 0$. Moreover, 
by virtue of Lemma \ref{TestFunctionLem} we get 
\begin{equation}
\label{nevyazka}
\|\mathcal{A} W-V^2\hat\lambda (V)W\|_{L^2(\Omega(V))\times L^2(\partial\Omega(V))}=O(V^2 M^\prime(V)). 
\end{equation}
Then, since 
\begin{multline*}
0=\frac{1}{2\pi{\rm i}}\oint_{|\lambda|=\delta} (\lambda-\mathcal{A}(V))^{-1}(\hat\lambda(V) V^2-\mathcal{A}(V))W\,d\lambda+
\frac{1}{2\pi{\rm i}}\oint_{|\lambda|=\delta} (\lambda-\mathcal{A}(V))^{-1}(\lambda-\hat\lambda(V) V^2)W\,d\lambda\\
=\Pi_\delta(V) (\hat \lambda(V) V^2 -\mathcal{A}(V)) W
+(\lambda(V)-\hat \lambda(V) V^2)\Pi_\delta (V)W,
\end{multline*}
we have
$$
|\lambda(V)-\hat\lambda(V) V^2|\leq \frac{ \|\mathcal{A}(V) W-\hat\lambda(V) V^2 W\|_{L^2(\Omega(V))\times L^2(\partial\Omega(V))}}
{
\|\Pi_\delta (V)W\|_{L^2(\Omega(V))\times L^2(\partial\Omega(V))}
} =O(V^2 M^\prime(V)).
$$

It remains to find $\theta=\theta(V)$ such that $\tilde W + \theta W_1 \in  {\mathcal{I}(V)}$. According to Lemma \ref{TestFunctionLem} we have
 $\langle\mathcal{A}(V) W-\hat\lambda(V) V^2 W, W_1^\ast\rangle_{L^2} = 0$, i.e. $ \hat\lambda(V) V^2\langle  W, W_1^\ast\rangle_{L^2} = \langle\mathcal{A}(V) W, W_1^\ast\rangle_{L^2}=\langle W, \mathcal{A}^\ast (V) W_1^\ast\rangle_{L^2}=0$. Thus we only need to chose $\theta$ such that 
$\theta \langle W_1, W_2^\ast\rangle_{L^2}=-\langle\tilde W, W_2^\ast\rangle_{L^2}$. Since 
$\langle  \mathcal{A}(V)\tilde W, W_2^\ast\rangle_{L^2}=\langle \tilde W,  W_1^\ast\rangle_{L^2}=0$ we have 
$\hat\lambda(V) V^2\langle  \tilde W, W_2^\ast\rangle_{L^2} = \langle\hat\lambda(V) V^2 \tilde W-\mathcal{A}(V)\tilde  W, W_2^\ast\rangle_{L^2}$, while by Lemma 
\ref{TestFunctionLem} and Lemma \ref{generLemma}
\begin{multline*}
\left|\langle\hat\lambda(V) V^2 \tilde W-\mathcal{A}(V)\tilde  W, W_2^\ast\rangle_{L^2}\right|\leq 
\left\|\hat\lambda(V) V^2 \tilde W-\mathcal{A}(V)\tilde  W\right\|_{L^2(\Omega(V))\times L^2(\partial\Omega(V))}
\\
\times
\left\|W_2^\ast\right\|_{L^2(\Omega(V))\times L^2(\partial\Omega(V))}=O(|M^\prime(V)|^2 V^2).
\end{multline*}
This leads to the bound $\langle\tilde W, W_2^\ast\rangle_{L^2}=O(VM^\prime(V))$, and noticing that 
$ \langle W_1, W_2^\ast\rangle_{L^2}= \langle \mathcal{A}(V)W_2, W_2^\ast\rangle_{L^2}=\langle W_2, W_1^\ast\rangle_{L^2}=M^\prime(V)$ we find the required bound
$|\theta|\leq C|V|$. Theorem \ref{GlavnayaTh} is completely proved.
\end{proof}

%
%

\section{Linear stability analysis of traveling wave solutions under  perturbations without symmetry assumptions}
\label{SectionNoSymmetry}

So far we assumed  reflectional symmetry with respect to the $x$-axis of traveling waves  (that are solutions $\phi=\Phi (x,y, V)$, $\Omega=\Omega (V)$ of \eqref{tw_Liouvtypeeq}--\eqref{addit_cond_tw}) and their perturbations. In this Section we study general perturbations that is with no symmetry assumptions on the pairs $(m,\rho)$ from the domain of the linearized operator $\mathcal{A}(V)$. Consider first the case $V=0$. It  corresponds to the  stationary radial solution  with the radius $R=R_0$ that satisfies the bifurcation conditions \eqref{VseVterminahF}.
 Then the linearized operator  
$\mathcal{A}(0)=\mathcal{A}_{\rm ss}$ has the same eigenvalues as under the above symmetry assumptions, but multiplicities of nonradial eigenvectors double since the odd Fourier modes $m=\hat m_n(r)\sin n\varphi$, $\rho=\hat \rho_n \sin n\varphi$ are also considered. 
In particular, we have the zero eigenvalue with two eigenvectors corresponding to infinitesimal shifts 
\begin{equation}
\label{SHIFTSxy}
(m,\rho)=(0,\nu_x)=(0,\cos\varphi),\quad (m,\rho)=(0,\nu_y)= (0,\sin\varphi),
\end{equation}
and two generalized eigenvectors
\begin{equation}
(m,\rho)=(m_0(\Phi_V^0(x,y)-x),0),\quad (m,\rho)=(m_0(\Phi_V^0(y,x)-y),0)
\end{equation}
(cf. \eqref{TwderivativeinV1}  with $V=0$), where  $\Phi_V^0$ is the unique solution of \eqref{def_of_Phi_V}--\eqref{def_of_Phi_Vadditional}. For $V\not =0$ the generalized eigenspace of the zero eigenvalue is described in
\begin{prop} 
\label{Proposytsiya}
The operator $\mathcal{A}(V)$ defined in \eqref{Hact_flow_lin_op}-\eqref{Hmyosin_lin_BC} has the zero eigenvalue with two eigenvectors $W_1=(m_1,\rho_1)$, $W_3=(m_3,\rho_3)$ corresponding to  infinitesimal shifts,
\begin{equation}
\label{xy_shiftsTW}
m_1:=-\Lambda(V)\partial_x e^{\Phi-Vx}, \rho_1:=\nu_x,\quad m_3:=-\Lambda(V)\partial_y e^{\Phi-Vx},\ \rho_3:=\nu_y,
\end{equation}
the generalized eigenvector $W_2$ given by \eqref{TwderivativeinV1} (which is obtained by taking derivative of the traveling wave solution in $V$),  and the  following generalized eigenvector $W_4=(m_4,\rho_4)$,  
\begin{equation}
\label{rotations}
m_4:=-\frac{\Lambda(V)}{V}\partial_\varphi e^{\Phi-Vx}=\frac{\Lambda(V)}{V}(y\partial_x e^{\Phi-Vx}-x\partial_y e^{\Phi-Vx}),\quad \rho_4:=\frac{1}{V}(-y\nu_x+x\nu_y),
\end{equation}
 which represents infinitesimal rotations of the traveling wave solution.
The generalized eigenvectors $W_2$, $W_4$ satisfy $\mathcal{A}(V)W_2=W_1$, $\mathcal{A}(V)W_4=W_3$. 
\end{prop}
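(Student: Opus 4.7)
The plan is to derive the four identities $\mathcal{A}(V)W_1=0$, $\mathcal{A}(V)W_3=0$, $\mathcal{A}(V)W_2=W_1$, and $\mathcal{A}(V)W_4=W_3$ from four symmetries of the original nonlinear time-dependent problem \eqref{actflow_in_terms_of_phi}--\eqref{myosin_bc_I}: translations in $x$ and in $y$, reparametrization of the bifurcating branch by the velocity (supplied by Theorem \ref{biftwtheorem}), and rotations about the origin. Each symmetry furnishes a one-parameter family of exact solutions of the nonlinear problem; transforming these to the co-moving frame $\tilde x=x-Vt$ and linearizing in the group parameter produces a time-dependent linear perturbation $u(t)$ satisfying $\partial_t u=\mathcal{A}(V)u$. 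The claimed identities then drop out by matching the coefficients of $t^0$ and $t^1$ in $u(t)$, and no direct manipulation of the rather complicated operator $\mathcal{A}(V)$ defined by \eqref{Hact_flow_lin_op}--\eqref{Hmyosin_lin_BC} is needed.

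The two translations are immediate: the shifted traveling wave remains stationary in the co-moving frame, so the derivative of the shift family at parameter zero lies in $\ker\mathcal{A}(V)$, and direct differentiation of $\Lambda(V)e^{\Phi(\tilde x-\delta,y,V)-V\tilde x}$ together with the signed-distance parametrization of the shifted boundary reproduces $W_1$ and $W_3$ as given in \eqref{xy_shiftsTW}. For the velocity derivative, the traveling wave at speed $V+\epsilon$ appears in the co-moving frame at velocity $V$ as the stationary profile $S_{V+\epsilon}$ drifting in $\tilde x$ at speed $\epsilon$; linearizing at $\epsilon=0$ (with the chain rule, and the signed-distance parametrization $\tilde\rho_{\rm tw}(s,V')+(V'-V)t\nu_x$ of the drifted boundary) yields $u(t)=W_2+tW_1$, so that $\partial_t u=\mathcal{A}(V)u$ reduces to $\mathcal{A}(V)W_2=W_1$.

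The rotation case is the main obstacle, because rotation is \emph{not} a symmetry of the co-moving dynamics. Rotating the lab-frame wave by $\theta$ produces a traveling wave with lab velocity $V(\cos\theta,\sin\theta)$, so in the co-moving frame at $(V,0)$ this rotated wave drifts rigidly at velocity $(V(\cos\theta-1),V\sin\theta)=(O(\theta^2),V\theta)$. Writing the rotated-and-drifting state at time $t$ as $R_\theta S_V$ translated by $(0,V\theta t)+O(\theta^2)$ and differentiating at $\theta=0$ (using that the infinitesimal rotation generator acting on functions is $y\partial_x-x\partial_y$, and the elementary formula $\theta(-y\nu_x+x\nu_y)$ for the signed distance of a rigidly rotated boundary point) yields
\[
u(t)=(-y\,W_1+x\,W_3)+Vt\,W_3=V\,W_4+Vt\,W_3,
\]
where the second equality is immediate from \eqref{rotations}. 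Matching $\partial_t u=\mathcal{A}(V)u$ then forces $V\,W_3=V\mathcal{A}(V)W_4$, hence $\mathcal{A}(V)W_4=W_3$. The $1/V$ factor in the definition of $W_4$ is precisely the normalization that keeps this generalized eigenvector bounded for $V\neq 0$; as $V\to 0$ it diverges, reflecting the fact that rotation becomes a genuine symmetry of the radial stationary solution and therefore ceases to produce a nontrivial generalized eigenvector.

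Finally, the smoothness of the bifurcating branch in $V$ provided by Theorem \ref{biftwtheorem} and the rotational invariance of the lab-frame problem ensure that $W_2$ and $W_4$ are smooth enough to lie in the domain of $\mathcal{A}(V)$; in particular the constrained boundary condition \eqref{Hmyosin_lin_BC} is inherited automatically, as it is the linearization of the nonlinear no-flux condition satisfied by every member of the underlying smooth family of exact solutions.
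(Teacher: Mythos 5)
Your argument is correct, but it proceeds along a genuinely different route from the paper's proof.  The paper verifies the identities $\mathcal{A}(V)W_3=0$ and $\mathcal{A}(V)W_4=W_3$ by direct manipulation of the formulas \eqref{Hact_flow_lin_op}--\eqref{Hmyosin_lin_BC}: one checks that $\phi=-\partial_y\Phi$ (resp.\ $\phi=-\partial_\varphi\Phi$) solves \eqref{Hact_flow_lin_op}, then differentiates the boundary relations $\partial_\nu\Phi=V\nu_x$ and $\zeta\Phi=p_*(|\Omega|)-\gamma\kappa$ along the arc length, uses the Frenet–Serret formulas $\nu_x^\prime=-\kappa\nu_y$, $\nu_y^\prime=\kappa\nu_x$ together with the identities $\partial_\varphi\partial_x=\partial_x\partial_\varphi-\partial_y$, $\partial_\varphi\partial_y=\partial_y\partial_\varphi+\partial_x$, and matches terms.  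Your proposal instead derives all four identities from symmetries of the nonlinear lab-frame evolution and the smooth dependence on $V$ of the bifurcating branch (Theorem \ref{biftwtheorem}), matching coefficients of $t^0$ and $t^1$ in the induced linear solution $u(t)$.  The payoff of your approach is that it explains \emph{why} the $1/V$ normalization appears in $W_4$ and why the rotation mode is only a generalized eigenvector (rotation fails to be a symmetry of the co-moving dynamics, but only by a drift of speed $O(\theta V)$), whereas the direct computation gives this without insight.  The price you pay is that your argument invokes as a black box the fact that $\mathcal{A}(V)$, as written in the signed-distance coordinates on $\partial\Omega(V)$, is the Fréchet derivative of the co-moving nonlinear flow — i.e.\ that $\partial_\varepsilon\big|_{\varepsilon=0}$ of any smooth one-parameter family of exact solutions is a solution of $\partial_t u=\mathcal{A}(V)u$.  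That is exactly what the formulas \eqref{Hact_flow_lin_op}--\eqref{Hmyosin_lin_BC} are \emph{meant} to express, and there is no circularity, but the paper's direct verification of the eigenvector identities also serves as an independent check of those linearization formulas, which yours does not.  For a self-contained and careful write-up your route would want to state this ``derivative of a solution family solves the linearized problem'' lemma explicitly (it is not hard, but it is a step) before invoking the three symmetries; with that made precise, your proof is complete and more conceptual than the one in the paper.
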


\begin{rem}
The  eigenvectors   $W_1$, $W_3$  appear due to  translational invariance of the problem	\eqref{actflow_in_terms_of_phi}--\eqref{myosin_bc_I}  under  shifts   of  the frame in $x$ and $y$ respectively.
  This problem is also  invariant under rotations.
 However, the equations \eqref{tw_Liouvtypeeq}--\eqref{addit_cond_tw} for the traveling waves solutions and corresponding linearized operator are written in the frame that translates with velocity $V$. That is why rotational invariance  gives rise to the generalized eigenvector $W_4$ 
rather than true eigenvector.
\end{rem}

\begin{proof} First we show that $\mathcal{A}(V)W_3=0$. Clearly \eqref{Hact_flow_lin_op} is satisfied with $\phi =-\partial_y \Phi$, also $(\mathcal{A}(V)W_3)_m$, given by \eqref{Hmyosin_lin_eq} equals zero identically. To verify that  $(\mathcal{A}(V)W_3)_\rho=0$  take the  tangential derivative of  the boundary condition 
$\partial_\nu \Phi=V\nu_x$ (this amounts to differentiating with respect to the arc length $s$):
\begin{equation}
\label{proizvodnayabc}
-\partial^2_{xx}\Phi\nu_x\nu_y+\partial^2_{xy}\Phi\nu_x^2-\partial_{xy}\Phi\nu_y^2+\partial^2_{yy}\Phi\nu_x\nu_y-\partial_{x}\Phi\kappa \nu_y
+\partial_{y}\Phi\kappa \nu_x=-V\kappa \nu_y,
\end{equation}
where we have used the Frenet-Serret formulas $\nu_x^\prime=-\kappa \nu_y$, $\nu_y^\prime=\kappa \nu_x$. Multiply this relation by $\nu_x$ and add to its both 
sides $\partial_{\nu\nu}\Phi \nu_y=(\partial^2_{xx}\Phi\nu_x^2+2\partial^2_{xy}\Phi\nu_x\nu_y+\partial^2_{yy}\Phi\nu_y^2)\nu_y$ to find
$$
0=\partial_\nu\partial_y\Phi-\partial_{\nu\nu}\Phi\, \nu_y+\kappa\nu_x (\partial_\tau\Phi+V\nu_y)= \partial_\nu\partial_y\Phi-\partial_{\nu\nu}\Phi\, \nu_y+ (\partial_\tau\Phi+V\nu_y)\nu_y^\prime. 
$$
The verification of  \eqref{Hmyosin_lin_BC}  is analogous, while to show \eqref{Hact_flow_BC_dir_op} we differentiate the equality $\zeta\Phi=p_{\ast}(|\Omega|)-\gamma\kappa$ in $s$ and obtain $\zeta\partial_{\tau}\Phi=-\gamma \kappa^\prime$. Then recalling that $\partial_{\nu}\Phi=V\nu_x$ we derive
\begin{equation*}
-\zeta \partial_y\Phi=-\zeta(\partial_{\tau}\Phi\, \tau_y+\partial_\nu \Phi\, \nu_y)=\gamma\kappa^\prime \tau_y-\zeta V\nu_x\nu_y=\gamma(\nu_y^{\prime\prime}+\kappa^2\nu_y)-\zeta V\nu_x\nu_y.
\end{equation*}
Clearly, all the above arguments apply 
 to show that $W_1$ is also an eigenvector.


We proceed now with the vector $W_4$. Take the derivative in $\varphi$ of $\Delta \Phi+\Lambda(V)e^{\Phi-Vx}=\zeta\Phi$ to obtain that \eqref{Hact_flow_lin_op}
is satisfied with $\phi=-\partial_\varphi\Phi$. Also, taking  the derivative in $\varphi$ of the equation 
$-V\partial_x e^{\Phi-Vx}=\Delta e^{\Phi-Vx} -{\rm div}(e^{\Phi-Vx}\nabla \Phi)$ and using the identities
$\partial_\varphi\partial_x \,\cdot = \partial_x\partial_\varphi \,\cdot -\partial_y \,\cdot$, 
$\partial_\varphi\partial_y \,\cdot = \partial_y\partial_\varphi \,\cdot +\partial_x \,\cdot$ we get 
$m_3=\Delta m_4 +V m_4-{\rm div}(m_4\nabla \Phi)+\Lambda(V){\rm div}(e^{\Phi-Vx}\nabla \partial_\varphi\Phi)$.  Considering equations on the boundary $\partial\Omega(V)$ we provide detailes
only for the equation \eqref{Hact_flow_sec_BC}, the verification of  \eqref{Hact_flow_BC_dir_op} and \eqref{Hmyosin_lin_BC}
being similar. Multiply the equation $\partial_\nu\partial_x \Phi-\partial_{\nu\nu}\Phi\, \nu_x+ (\partial_\tau\Phi+V\nu_y)\nu_x^\prime=0$  by $y$ and subtract 
the equation $-\partial_\nu\partial_y \Phi+\partial_{\nu\nu}\Phi\, \nu_y-(\partial_\tau\Phi+V\nu_y)\nu_y^\prime=0$ multiplied by $x$. After simple manipulations we obtain
\begin{equation*}
\begin{aligned}
0&=-\partial_\nu \partial_\varphi \Phi-\nu_y\partial_x\Phi+\nu_x\partial_y \Phi +\partial_{\nu\nu}\Phi (x\nu_y-y\nu_x)+ (\partial_\tau\Phi+V\nu_y)(y \nu_x^\prime-x\nu_y^\prime)\\
&=-\partial_\nu \partial_\varphi \Phi+\partial_\tau \Phi+\partial_{\nu\nu}\Phi (x\nu_y-y\nu_x)+ (\partial_\tau\Phi+V\nu_y)\left((y \nu_x-x\nu_y)^\prime-y^\prime\nu_x+x^\prime\nu_y\right).
\end{aligned}
\end{equation*}
Since $x^\prime=\tau_x=-\nu_y$ and $y^\prime=\tau_y=\nu_x$ we finally get
$$
-\partial_\nu \partial_\varphi \Phi+\partial_{\nu\nu}\Phi (x\nu_y-y\nu_x)- (\partial_\tau\Phi+V\nu_y)(x\nu_y-y \nu_x)^\prime=V\nu_y.
$$
Proposition \ref{Proposytsiya} is proved.\end{proof}

While Theorem  \ref{GlavnayaTh} describes spectrum of the operator $\mathcal{A}(V)$ in the space of vectors possessing symmetry with respect to the $x$-axis, in view of 
Proposition \ref{Proposytsiya} the multiplicity of the zero eigenvalue of $\mathcal{A}(V)$ in the (invariant) space of vectors anti symmetric with respect to the $x$-axis remains equal two for small $V\not=0$. Thus we have the following theorem which summarizes  spectral analysis of the operator $\mathcal{A}(V)$.

\begin{thm} \label{ThmNoSymmetry} Under assumptions of Theorem \ref{biftwtheorem} the spectrum of the operator $\mathcal{A}(V)$ for small $V$  has the following structure. The operator $\mathcal{A}(V)$ has zero eigenvalue with multiplicity four and the structure of  the corresponding generalized eigenspace is  described in  Proposition \ref{Proposytsiya}. There exists another small  simple eigenvalue $\lambda(V)$, whose asymptotic representation is 
given by \eqref{sovsemfinal_formula_for_lambda} (under additional assumption that $M^\prime(V)\not =0$ for small $V\not=0$).  All other eigenvalues 
are separated from  zero. Moreover, each of them but, possibly one,  has negative real parts. In particular, if condition   \eqref{areashrinkingprevention} on $k_e$ is satisfied, 
then among all nonzero eigenvalues only $\lambda(V)$ can have non negative real part. 
\end{thm}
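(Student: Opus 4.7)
The plan is to exploit the reflectional symmetry of the traveling wave solution $(\Phi,\Omega(V))$ about the $x$-axis. The map $y\mapsto -y$ commutes with $\mathcal{A}(V)$, so the phase space $L^2(\Omega(V))\times L^2(\partial\Omega(V))$ decomposes into the invariant direct sum $\mathcal{S}_+\oplus\mathcal{S}_-$ of pairs $(m,\rho)$ even, respectively odd, in $y$. The spectrum of $\mathcal{A}(V)$ is then the union of its spectra on each sector, and on $\mathcal{S}_+$ everything has already been proved in Theorem \ref{GlavnayaTh}: the zero eigenvalue has algebraic multiplicity two with eigenvector $W_1$ and generalized eigenvector $W_2$, there is a simple nonzero $\lambda(V)$ obeying \eqref{sovsemfinal_formula_for_lambda}, and all other eigenvalues in $\mathcal{S}_+$ are bounded away from zero with negative real parts by Theorem \ref{thm_linearizeddisk}.

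The burden therefore reduces to analyzing $\mathcal{A}(V)$ on $\mathcal{S}_-$. First I would count small eigenvalues at $V=0$. Rotational invariance of the radial stationary problem forces the spectra in the $\cos(n\varphi)$ and $\sin(n\varphi)$ Fourier sectors to coincide for every $n\ge 1$, while the radial ($n=0$) sector lies entirely in $\mathcal{S}_+$. Hence at the critical radius $R_0$ with $E(R_0)=0$, the zero eigenvalue on $\mathcal{S}_-$ has algebraic multiplicity exactly two, spanned by the $\sin\varphi$-analog of the Jordan chain supplied by Lemma \ref{och_polezn_lemma}; the Rayleigh-quotient computations in the proof of Theorem \ref{thm_linearizeddisk} apply verbatim to the sine modes and show that all remaining $\mathcal{S}_-$-eigenvalues of $\mathcal{A}(0)$ have strictly negative real parts.

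To transfer this count to $V\ne 0$ I would re-use the Riesz-projector/resolvent scheme already employed in the proof of Theorem \ref{GlavnayaTh}. Lemma \ref{VspomogLemma} is symmetry-agnostic, so for $\delta>0$ smaller than the distance from the rest of the $\mathcal{A}(0)|_{\mathcal{S}_-}$-spectrum to the origin, the resolvent $(\lambda-\mathcal{A}(V)|_{\mathcal{S}_-})^{-1}$ exists and is uniformly bounded on $|\lambda|=\delta$ for small $V$. The continuity of the Riesz projector
\[
\Pi_\delta(V)|_{\mathcal{S}_-}=\frac{1}{2\pi {\rm i}}\oint_{|\lambda|=\delta}(\lambda-\mathcal{A}(V))^{-1}\,d\lambda
\]
as $V\to 0$ (established by the same compactness-and-contradiction argument as in Theorem \ref{GlavnayaTh}) forces the dimension of the small-spectrum invariant subspace on $\mathcal{S}_-$ to remain equal to two. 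Proposition \ref{Proposytsiya} exhibits two linearly independent generalized null vectors $W_3$ and $W_4$, so these must exhaust this subspace: no further small eigenvalue can branch off on the antisymmetric side.

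Assembling the two sectors, $\mathcal{A}(V)$ acquires a zero eigenvalue of algebraic multiplicity $2+2=4$ with the Jordan structure of Proposition \ref{Proposytsiya}, plus the simple $\lambda(V)$ inherited from $\mathcal{S}_+$; all remaining eigenvalues stay bounded away from zero by continuity. By the proof of Theorem \ref{thm_linearizeddisk} (and its $\sin(n\varphi)$-analog on $\mathcal{S}_-$), each such eigenvalue save at most one -- the one arising from the radial $n=0$ mode whose sign analysis requires the specific bound \eqref{areashrinkingprevention} -- has negative real part; under \eqref{areashrinkingprevention} that exceptional eigenvalue too is stable, leaving $\lambda(V)$ as the only possible driver of instability. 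I expect the main delicacy to sit precisely in the $\mathcal{S}_-$-projector argument: unlike on $\mathcal{S}_+$, where the $n=0$ mass mode supplies the third small eigenvalue that splits off as $\lambda(V)$, on $\mathcal{S}_-$ the count must close at exactly two, and one has to rule out the birth of a spurious small nonzero eigenvalue purely by the projector-continuity mechanism, without any ansatz playing the role of Lemma \ref{TestFunctionLem}.
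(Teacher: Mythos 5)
Your proposal follows essentially the same route as the paper: decompose by reflectional symmetry across the $x$-axis, invoke Theorem \ref{GlavnayaTh} on the even sector, and use Proposition \ref{Proposytsiya} together with a resolvent/Riesz-projector continuity argument (via Lemma \ref{VspomogLemma}) to pin the zero eigenvalue at multiplicity two on the odd sector. The paper presents this more tersely in the discussion preceding the theorem statement rather than as a formal proof, but the underlying mechanism -- counting at $V=0$ via the coincidence of $\sin n\varphi$ and $\cos n\varphi$ spectra (for $n\ge1$), absence of the $n=0$ mode in the odd sector, and continuity of the spectral projector -- matches yours; your spelling out that the explicit Jordan chain $W_3,W_4$ must exhaust the rank-two projector range (so that no spurious nonzero small eigenvalue can appear) is a valid and clarifying elaboration of exactly the step the paper leaves implicit.
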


Together with general formula \eqref{sovsemfinal_formula_for_lambda}  for the key eigenvalue $\lambda(V)$ we mention its particular case \eqref{formula_for_lambda_second}, which sheds light on the role of main physical parameters in the stability/instability of cell motion. In short, this theorem shows that the sign of $\lambda(V)$ determines stability of emerging traveling waves for small velocities.
Specifically,  if the inverse compressibility  coefficient $k_{\rm e}$ defined in \eqref{k_e}   satisfies \eqref{areashrinkingprevention}, then the eigenvalue $\lambda(V)$ given by \eqref{formula_for_lambda_second} determines stability of traveling waves  via sign  of the product of the two key physical quantities. First, the derivative of the eigenvalue $\frac{d E}{d M}\bigl|_{M=M_0}\bigr.$ ($ M_0=m_0 \pi R_0 ^2, m_0= p_{\ast}(\pi R_0^2)-\gamma/ R_0$) that describes the change of movabiliy of stationary solutions. Indeed, in view of  Remark \ref{eigenvalue_stationary}   $E\bigl|_{M=M_0}=0\bigr.$ and therefore positive/negative values of $\frac{d E}{d M}\bigl|_{M=M_0}\bigr.$ lead to instability/stability of the stationary solutions.  
Second, the value $M^{\prime \prime}(0)$   that determines weather total myosin mass $M$ increases or decreases with $V$ (since $M^\prime(0)=0$, see \eqref{myosin_density} and combine \eqref{myosin_derivative_zero} with the asymptotic formula for $\Lambda(V)$ in Remark \ref{rem:boundary}). 

Finally, we  recall that   $\frac{d E}{d M}\bigl|_{M=M_0}\bigr.$ is  calculated in Lemma \ref{poleznyeformul'ki}, formula \eqref{formula_for_E} (near the  bifurcation point  the value of $M_0$  uniquely determines  the value of $R_0$ and vice versa due  to Remark \ref{R_M}).  We  proceed with calculation of $M^{\prime \prime}(0)$.  To this end we construct the first terms in the expansions of traveling wave solutions to \eqref{tw_Liouvtypeeq}-\eqref{addit_cond_tw} as a  power series in $V$ similarly to the Appendix A in \cite{BerFuhRyb2018}. Consider \eqref{expansion_of_Phi} and the leading term in the expansion  of $\tilde\Phi(x,y,V)$ 
\begin{equation}\label{eq:phi_expansion}
\tilde\Phi(x,y,V)=\Phi_{10}(r)+\Phi_{11}(r)\cos\varphi+\Phi_{12}(r)\cos2\varphi+O(V), \quad V \to 0,
\end{equation}
as well as  the similar expansion for the shape of traveling waves and total myosin mass: 
\begin{equation}\label{eq:rho_expansion}
\rho_{tw}=V^2\left(\rho_{10}+\rho_{12}\cos2\varphi+O(V)\right),  
\end{equation}
\begin{equation}\label{eq:M_expanson}
M(V)=M_0+M_1V^2+O(V^3),
\end{equation}
where $\rho_{10}$, $\rho_{12}$,  $M_0$ and $M_1$  are constants.
Proceeding as in  \cite{BerFuhRyb2018} (Appendix A) one can derive  elliptic boundary value problems in the disk $B_R$, to determine the  unknowns $\Phi_{10}$, $\Phi_{11}$, $\Phi_{12}$, $\rho_{10}$, $\rho_{12}$, and $M_1$. 
Numerical solution of  these elliptic problems  explains the nature of the onset of motion via passing from unstable statinary solutions to stable traveling waves.  
In  particular, the numerics shows that if conditions of Theorem \ref{ThmNoSymmetry} and   the condition \eqref{areashrinkingprevention} hold, then   the bifurcation is {\it always the supercritical pitchfork}, since the real part of the key eigenvalue $\lambda(V) $ is always negative for sufficiently small $V$. This result agrees with
1D results from 
\cite{RecPutTru2015}, where the normal mode analysis revealing the structure of the bifurcation has been performed for the first time. Besides it was observed in 1D case an interesting fact that there may be a re-entry behavior when the symmetry is restored for large enough amount of motors. The qestion if this is also true for the 2D model introduced in this work is open.

\bibliographystyle{plain}
\bibliography{references}

\end{document}